\theoremstyle{plain}
\newtheorem{thm}{Theorem}
\newtheorem*{thm*}{Theorem}
\newtheorem{cor}[thm]{Corollary}
\newtheorem*{cor*}{Corollary}
\newtheorem{prop}[thm]{Proposition}
\newtheorem{lemma}[thm]{Lemma}
\newtheorem{q}[thm]{Question}
\newtheorem*{conj*}{Conjecture}
\theoremstyle{definition}
\newtheorem{defi}[thm]{Definition}
\newtheorem*{defi*}{Definition}
\newtheorem*{exple*}{Example}
\newtheorem*{exples*}{Examples}
\theoremstyle{remark}
\newtheorem*{Rmk}{Remark}
\newtheorem*{Rmks}{Remarks}
\newtheorem{rmk}[thm]{Remark}
\newtheorem{rmks}[thm]{Remarks}
\newtheorem{claim}{Claim}
\newenvironment{pf}[1][Proof.]{\noindent \textbf{#1:} }{}
\newenvironment{enui}{\begin{enumerate}[(i)]}{\end{enumerate}}
\newenvironment{enua}{\begin{enumerate}[(a)]}{\end{enumerate}}
\newcommand\reff[1]{(\ref{#1})}
\newcommand\wt[1]{{\widetilde{#1}}}
\newcommand{\N}{\mathbb{N}}
\newcommand{\Z}{\mathbb{Z}}
\newcommand{\R}{\mathbb{R}}
\newcommand\Om{\Omega}
\newcommand\om{\omega}
\newcommand\x{\times}
\newcommand\al{\alpha}
\renewcommand\phi{\varphi}
\newcommand\lam{\lambda}
\newcommand\eps{\varepsilon}
\newcommand{\BAR}[1]{{\overline{#1}}}
\newcommand\wo{\setminus}
\newcommand\id{{\operatorname{id}}}
\newcommand\nn{{\nonumber}}
\newcommand\sub{\subseteq}
\newcommand\omst{\om^{\operatorname{st}}}
\newcommand{\Int}{\operatorname{Int}}
\newcommand\HH{\mathcal{H}}
\newcommand\si{\sigma}
\newcommand\const{\equiv}
\newcommand\vv{\mathbf{v}}
\newcommand\follows{\Leftarrow}
\newcommand\ddisp{D_{\operatorname{disp}}}
\newcommand\Ddisp{D_{\operatorname{disp}}^{\operatorname{rect}}}
\newcommand\dsq{D_{\operatorname{sq}}}
\newcommand\Dsq{D_{\operatorname{sq}}^{\operatorname{rect}}}
\newcommand\T{\mathbb{T}}
\newcommand\diam{\operatorname{diam}}
\newcommand\SSS{{\mathcal{S}}}
\newcommand\Wlog{w.l.o.g.~}
\newcommand\de{\delta}
\newcommand\then{\Longrightarrow}
\newcommand\U{\mathcal{U}}
\newcommand\V{\mathcal{V}}
\newcommand\F{\mathcal{F}}
\newcommand\im{{\operatorname{im}}}
\newcommand\dom{{\operatorname{dom}}}
\newcommand\dE{d_{\operatorname{E}}}
\newcommand\inj{\hookrightarrow}
\newcommand\Ham{{\operatorname{Ham}}}
\newcommand\Embs{{\operatorname{Emb}_{\operatorname{symp}}}}
\newcommand\omcan{\om^{\operatorname{can}}}
\newcommand\iso{\cong}
\begin{document}

\title[Instantaneous Hamiltonian displaceability, squeezability]{Instantaneous Hamiltonian displaceability and arbitrary symplectic squeezability for critically negligible sets}
\author{Yann Guggisberg\and Fabian Ziltener}
\address{Affiliation of Y.~Guggisberg: Utrecht University\\
mathematics institute\\
Hans Freudenthalgebouw\\
Budapestlaan 6\\
3584 CD Utrecht\\
The Netherlands
}
\email{y.b.guggisberg@uu.nl}
\address{Affiliation of F.~Ziltener: ETH Z\"{u}rich\\
departement of mathematics\\
R\"{a}mistrasse 101\\
8092 Z\"{urich}\\
Switzerland
}
\email{fabian.ziltener@math.ethz.ch}
\thanks{Y.~Guggisberg's work on this publication is part of the project \emph{Symplectic capacities, recognition, discontinuity, and helicity} (project number 613.009.140) of the research programme \emph{Mathematics Clusters}, which is financed by the Dutch Research Council (NWO). We gratefully acknowledge this funding.}
	
\setcounter{tocdepth}{1}
	
\maketitle

\begin{abstract} We call a metric space $s$-negligible iff its $s$-dimensional Hausdorff measure vanishes. We show that every countably $m$-rectifiable subset of $\R^{2n}$ can be displaced from every $(2n-m)$-negligible subset by a Hamiltonian diffeomorphism that is arbitrarily $C^\infty$-close to the identity. As a consequence, every countably $n$-rectifiable and $n$-negligible subset of $\R^{2n}$ is arbitrarily symplectically squeezable. Both results are sharp w.r.t.~the parameter $s$ in the $s$-negligibility assumption. 

The proof of our squeezing result uses folding. Potentially, our folding method can be modified to show that the Gromov width of $B^{2n}_1\wo A$ equals $\pi$ for every countably $(n-1)$-rectifiable closed subset $A$ of the open unit ball $B^{2n}_1$. This means that $A$ is not a barrier.
\end{abstract}

\tableofcontents

\section{Main results and a potential application regarding nonbarriers}\label{sec:main}
This article is concerned with the following question. We denote $\N_0:=\{0,1,\ldots\}$. Let $n\in\N_0$ and $(M,\om)$ be a symplectic manifold of dimension $2n$. 
\begin{q}\label{q:small} What is the smallest size of a subset of $M$ that carries some symplectic geometry?
\end{q}
In this article we interpret \emph{size} as \emph{Hausdorff dimension}. To recall this notion, let $(X,d)$ be a metric space and $s\in[0,\infty)$. For every $A\sub X$ we denote by $\diam A$ the diameter of $A$ w.r.t.~$d$.
\begin{defi}[Hausdorff negligibility, Hausdorff dimension]\label{def:neg Hd dim} We call $(X,d)$ \emph{$s$-\mbox{(Hausdorff-)} negligible} iff for every $\eps\in(0,\infty)$ there exists a countable collection $\SSS$ of subsets of $X$ that covers $X$, such that
\[\sum_{A\in\SSS}(\diam A)^s<\eps.%
\footnote{Here we use the convention $0^0:=1$.} 
\]
We define
\begin{align*}&\textrm{Hausdorff dimension of }(X,d)\\
&:=\inf\big\{s\in[0,\infty)\,\big|\,(X,d)\textrm{ is $s$-negligible}\big\}.\end{align*}
\end{defi}
\begin{Rmk}$(X,d)$ is $s$-negligible iff its $s$-dimensional outer Hausdorff measure vanishes.
\end{Rmk}
\subsection*{Instantaneous Hamiltonian displaceability of small sets}
One instance of Question \ref{q:small} is the following.
\begin{q}\label{q:displ} What is the smallest possible Hausdorff dimension $\ddisp$ of a subset of $M$ with positive (Hamiltonian) displacement energy?
\end{q}
It is well-known that $\ddisp\leq n$, if $(M,\om)$ is geometrically bounded. This follows from the fact that the displacement energy of every nonempty closed%
\footnote{i.e., compact without boundary} %
Lagrangian submanifold of such a symplectic manifold is positive, see e.g.~\cite{Che}. Apart from this estimate and a result%
\footnote{Theorem \ref{thm:inf displ} below} %
concerning \emph{submanifolds} of dimension less than $n$, to our knowledge, Question \ref{q:displ} has been completely open so far.

We now consider $M=\R^{2n}$ and equip this manifold with the standard symplectic form $\omst$. Our first main result implies that in this case $\Ddisp\geq n$ and therefore $\Ddisp=n$, where $\Ddisp$ is the variant of $\ddisp$ in which we only allow for subsets of $\R^{2n}$ that are bounded and countably $n$-rectifiable. This answers a variant of Question \ref{q:displ}.

In order to state our result, we need the following. Let $(X,d)$ be a metric space. Let $m\in\N_0$.
\begin{defi}[countable $m$-rectifiability]\label{defi:count m rect} We call $X$ \emph{countably $m$-rectifiable} iff there exists a countable%
\footnote{This includes empty and finite.} 
set $\F$ of Lipschitz maps with domains given by subsets of $\R^m$ and target $X$, such that
\begin{equation}\label{eqn_covering_A}
X=\bigcup_{f\in\F}\im(f).
\end{equation}
\end{defi}
The next lemma characterizes countable $m$-rectifiability.
\begin{lemma}[countable $m$-rectifiability]\label{le:count rect}The following conditions are equivalent:
\begin{enua}
\item\label{le:count rect:def}$(X,d)$ is countably $m$-rectifiable.
\item\label{le:count rect:bdd} There exists a set $\F$ as in Definition \ref{defi:count m rect}, such that the domain of each map in $\F$ is bounded.
\item\label{le:count rect:loc Lip} There exists a surjective \emph{locally} Lipschitz map from some subset of $\R^m$ to $X$.
\end{enua}
\end{lemma}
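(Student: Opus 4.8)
The plan is to establish the cycle of implications \ref{le:count rect:loc Lip} $\Rightarrow$ \ref{le:count rect:def} $\Rightarrow$ \ref{le:count rect:bdd} $\Rightarrow$ \ref{le:count rect:loc Lip}. The last implication \ref{le:count rect:bdd} $\Rightarrow$ \ref{le:count rect:loc Lip} is essentially immediate from Definition \ref{defi:count m rect}: given a countable family $\F$ whose members $f\colon D_f\to X$ have bounded domains $D_f\sub\R^m$, I would translate the $D_f$ to pairwise disjoint regions of $\R^m$ (using boundedness to fit countably many disjoint bounded sets, e.g.\ place $D_f$ near the lattice points of $\Z^m$, or along a single axis), and glue the $f$ into a single map on the disjoint union; this glued map is surjective by \reff{eqn_covering_A} and locally Lipschitz because near each point of the domain it coincides with one of the $f$ (the pieces being separated). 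The trivial implication \ref{le:count rect:bdd} $\Rightarrow$ \ref{le:count rect:def} is just weakening, so really only two implications carry content.

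For \ref{le:count rect:loc Lip} $\Rightarrow$ \ref{le:count rect:def}: let $g\colon D\to X$ be a surjective locally Lipschitz map with $D\sub\R^m$. Cover $D$ by countably many relatively open sets on which $g$ is Lipschitz — this uses that $\R^m$ is second countable, hence Lindel\"of, so the open cover of $D$ by neighborhoods witnessing local Lipschitzness admits a countable subcover $\{U_k\}$. Then the restrictions $g|_{U_k}$ are Lipschitz maps from subsets of $\R^m$ to $X$ whose images cover $X$, giving a family $\F$ as in Definition \ref{defi:count m rect}.

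For \ref{le:count rect:def} $\Rightarrow$ \ref{le:count rect:bdd}: given $\F$ with members $f\colon D_f\to X$, $D_f\sub\R^m$ Lipschitz but possibly unbounded, I would cover each $D_f$ by the countably many closed unit cubes $Q$ with integer-vector centers that meet it, and replace $f$ by the countable collection of restrictions $\{f|_{D_f\cap Q}\}_Q$. Each $D_f\cap Q$ is bounded, each restriction is still Lipschitz (with the same constant), and the union of the images is unchanged, so the new family is as required in \ref{le:count rect:bdd}. The one point deserving a line of care is that a countable union of countable families is countable, so the enlarged family is still countable.

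I do not expect a genuine obstacle here; the only mild subtlety is the Lindel\"of argument in \ref{le:count rect:loc Lip} $\Rightarrow$ \ref{le:count rect:def} (one must pass from an arbitrary open cover to a countable subcover, which is where separability of $\R^m$ is used) and, in the gluing step \ref{le:count rect:bdd} $\Rightarrow$ \ref{le:count rect:loc Lip}, making sure the separated pieces are at positive distance from one another so that local Lipschitzness — as opposed to global Lipschitzness, which may fail since the Lipschitz constants of the $f$ need not be bounded — is genuinely achieved at every point of the domain, including points of one piece that are limit points of no other piece.
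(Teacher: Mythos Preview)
Your proposal is correct and follows essentially the same route as the paper: the paper proves the cycle \reff{le:count rect:def}$\Rightarrow$\reff{le:count rect:bdd}$\Rightarrow$\reff{le:count rect:loc Lip}$\Rightarrow$\reff{le:count rect:def} with the same three ideas (restrict to bounded pieces; translate the bounded domains to be pairwise separated by distance $\geq1$ and glue; pass to a countable subfamily using second countability of $\R^m$). The only cosmetic differences are that the paper uses a countable base for $\R^m$ where you invoke Lindel\"of, and an ``exhausting collection of bounded subsets'' where you use unit cubes; your aside about \reff{le:count rect:bdd}$\Rightarrow$\reff{le:count rect:def} being a weakening is true but not part of the cycle you actually run.
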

For a proof of this lemma see p.~\pageref{proof:le:count rect} in the appendix.
\begin{Rmk}[term \emph{rectifiability}] In \cite[3.2.14, p.251]{Fed69} H.~Federer calls $(X,d)$ \emph{countably $m$-rectifiable} iff it satisfies condition \reff{le:count rect:bdd}. By Lemma \ref{le:count rect} Federer's and our terminology coincide. It differs from the terminology used in some other sources.
\end{Rmk}
Our first main result is the following. We equip $\R^\ell$ with the Euclidean distance function and every subset of $\R^\ell$ with the corresponding restriction of this function. Let $n\in\N_0$.
\begin{thm}[instantaneous Hamiltonian displaceability of small sets]\label{thm:displ} Let $m\in\{0,\ldots,2n\}$, $A\sub\R^{2n}$ be a countably $m$-rectifiable subset and $B\sub\R^{2n}$ a $(2n-m)$-negligible\footnote{as in Definition \ref{def:neg Hd dim}} subset. There exists a linear function $H:\R^{2n}\to\R$, such that for almost every%
\footnote{This means that the set of all such $t$ has full (1-dimensional) Lebesgue measure.} 
$t\in\R$, we have
\[\phi_H^t(A)\cap B=\emptyset.%
\footnote{Here $\phi_H^t$ denotes the Hamiltonian time $t$ flow of $H$ (w.r.t.~the standard symplectic form). Since $H$ is linear, this flow is globally defined for all times.}
\]
\end{thm}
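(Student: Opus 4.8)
The plan is to find a single linear Hamiltonian $H$ whose flow $\phi_H^t$ is a one-parameter group of affine symplectomorphisms, and to show that for a suitable choice of the linear part, the "bad" set of times
\[
T:=\{t\in\R\,|\,\phi_H^t(A)\cap B\neq\emptyset\}
\]
has Lebesgue measure zero. Since $\phi_H^t(A)\cap B\neq\emptyset$ exactly when $t$ lies in the image of the map $(a,b)\mapsto\{t:\phi_H^t(a)=b\}$ over $A\times B$, the key is a dimension count: $A$ contributes at most $m$ "degrees of freedom", $B$ contributes at most $2n-m$ (in the sense of vanishing Hausdorff measure in dimension $2n-m$), the flow direction contributes $1$, and the target $\R^{2n}$ absorbs $2n$; so heuristically the bad times form a set of dimension $\le m+(2n-m)+1-2n=1$, and one needs the sharper statement that it is actually null. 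To make this rigorous I would first reduce, using Lemma \ref{le:count rect} and countable subadditivity of Lebesgue measure, to the case where $A=\im(f)$ for a single Lipschitz map $f\colon Q\to\R^{2n}$ with $Q\sub\R^m$ bounded; and by covering $B$ by countably many sets of small diameter it suffices to control, for each piece, the measure of the times $t$ for which $\phi_H^t(f(Q))$ meets that piece.

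Concretely, pick coordinates $(x,y)\in\R^n\times\R^n$ and let $H(x,y)=\langle c,x\rangle$ be linear, so that the Hamiltonian vector field is constant, $X_H=(0,c)$ say (the exact normal form of a generic linear $H$ has constant or linear-shear flow); more robustly, choose $H$ quadratic-free but with an incommensurable linear part so that $\phi_H^t(p)=p+tv$ for a fixed nonzero vector $v\in\R^{2n}$ and all $p$. Then $\phi_H^t(a)=b$ forces $t v=b-a$, i.e. $t=\langle b-a,v\rangle/|v|^2$ and $b-a\in\R v$. Thus
\[
T\sub\Big\{\tfrac{\langle b-a,v\rangle}{|v|^2}\ \Big|\ a\in A,\ b\in B,\ b-a\parallel v\Big\}
\subseteq g\big((B-A)\cap\R v\big),
\]
where $g$ is the linear projection onto the $v$-axis. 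So it suffices to show: for $\Leb$-almost every direction $v$, the set $(B-A)\cap\R v$ is mapped to a Lebesgue-null subset of $\R$ — equivalently $\Leb^1\big(g((B-A)\cap\R v)\big)=0$. Here $B-A=\{b-a\}$ is the image of $B\times A$ under subtraction; since $A$ is the Lipschitz image of a bounded subset of $\R^m$ and $B$ is $(2n-m)$-negligible, the set $B-A$ has $\sigma$-finite $(2n-m)+m=2n$-... — wait, this is exactly the borderline case, so one must instead slice.

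The clean way is to use a Fubini/slicing argument for Hausdorff measures (the coarea-type inequality, cf.\ Federer). Fix the $1$-dimensional Lebesgue measure on the $v$-line and the $(2n-1)$-dimensional Hausdorff measure on hyperplanes $v^\perp$; then for the flow by the constant vector field $v$, the bad-time set $T$ is null if and only if the set $\{(a,b)\in A\times B: b-a\in\R v\}$ projects to a null set of parameters $t$. Integrate over the sphere of directions: $\int_{S^{2n-1}}\Leb^1(T_v)\,d\sigma(v)$ is, up to constants, bounded by an integral that factors through $\HH^m(A)<\infty$ (finiteness of Hausdorff measure for a countably $m$-rectifiable set that is a Lipschitz image of a bounded set) times $\HH^{2n-m}(B)=0$, so the integrand $\Leb^1(T_v)$ vanishes for $\sigma$-a.e.\ $v$; pick one such $v$, and let $H$ be the linear Hamiltonian generating translation by that $v$. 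This handles general $m\in\{0,\dots,2n\}$ uniformly, including the degenerate endpoints $m=0$ (then $A$ is countable, translation generically misses a full-measure-complement $B$) and $m=2n$ (then $B$ is $0$-negligible, i.e.\ empty, and there is nothing to prove).

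The step I expect to be the main obstacle is making the slicing/averaging estimate fully rigorous at the borderline exponent: one needs a genuine "integralgeometric" or coarea inequality asserting that $\int_{S^{2n-1}} \HH^{1}\!\big(\pr_v((A-B)\cap \R v)\big)\,d\sigma(v)$ (or the corresponding thing for the bad-time set) is controlled by $\HH^{m}(A)\cdot\HH^{2n-m}(B)$ — since a product set of Hausdorff dimensions $m$ and $2n-m$ has dimension exactly $2n$ in $\R^{2n}$ but can fail to have finite or vanishing $2n$-measure, so one cannot simply invoke a single Hausdorff-measure comparison. The fix is to keep $A$ on the Lipschitz-image side so that $\HH^m(A)<\infty$ honestly, to keep $B$ on the null side, and to apply Eilenberg-type inequalities slicewise rather than to the product; alternatively one can avoid measures entirely by a direct covering argument: cover $A$ efficiently by $\HH^m$-controlled pieces, cover $B$ by pieces with $\sum(\diam)^{2n-m}<\eps$, and bound the measure of the resulting bad-time intervals by (Lipschitz constant terms) $\times$ (number of $A$-pieces meeting a $B$-piece along direction $v$), then integrate over $v$ and let $\eps\to 0$. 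I would carry out this explicit covering version, as it sidesteps the subtleties of borderline-dimensional product measures and makes the sharpness in $s=2n-m$ transparent.
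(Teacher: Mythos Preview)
Your overall strategy is exactly the paper's: observe that a linear Hamiltonian generates the translation flow $\phi_H^t(x)=x+tv$, so the bad-time set is $T_v=\{t:tv\in B-A\}$, and then average over directions $v\in S^{2n-1}$ to find one for which $T_v$ is null. You also correctly locate the crux as the ``borderline'' measure-theoretic step. But you circle around that step without landing on it, proposing Eilenberg/coarea inequalities and then an explicit covering argument, while the paper dispatches it in one line.

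The clean fact you are missing is this: if $A$ is countably $m$-rectifiable and $B$ is $(2n-m)$-negligible, then the product $A\times B$ is $2n$-negligible. This is the paper's Lemma~\ref{le:prod rect neg}, and it follows almost immediately from Federer's product inequality \cite[2.10.45]{Fed69}: for each Lipschitz piece $f:S\to\R^{2n}$ with $S\sub\R^m$, the set $S\times B\sub\R^m\times\R^{2n}$ is $(m+(2n-m))$-negligible, and $f\times\id$ is Lipschitz, so $\im(f)\times B$ is $2n$-negligible; then sum over the countably many pieces. With this in hand, $B-A=\vv(A\times B)$ is the Lipschitz image of a $2n$-negligible set, hence Lebesgue null in $\R^{2n}$, and polar Fubini (\emph{with the weight $r^{2n-1}$}) gives $\int_{S^{2n-1}}\int_0^\infty\chi_{B-A}(rv)\,r^{2n-1}\,dr\,d\si(v)=0$, so for a.e.\ $v$ the radial slice is null. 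Pick such a $v$ (and its antipode) and you are done.

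Two specific comments on your write-up. First, your asserted bound $\int_{S^{2n-1}}\Leb^1(T_v)\,d\si(v)\lesssim\HH^m(A)\cdot\HH^{2n-m}(B)$ is not the right integral: without the radial weight $r^{2n-1}$ this quantity is not comparable to $\lam^{2n}(B-A)$ (indeed it could be infinite even when $B-A$ is null, since $B$ is not assumed bounded), so the implication ``integrand vanishes a.e.'' needs the weighted version. Second, your reduction to a single Lipschitz piece of $A$ is fine, but be explicit that the set of good directions for each piece is conull in $S^{2n-1}$, so a countable intersection still yields a common $v$; you need one $v$ for the whole of $A$, not one per piece. Neither of these is fatal, but together with the missing product-negligibility lemma they leave the argument incomplete as written.
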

Theorem \ref{thm:displ} implies that $A$ and $B$ as in the hypothesis of this result are $C^\infty$-instantaneously displaceable. To explain this notion, we denote 
\begin{align}\label{eq:HH M om}\HH(M,\om):=\big\{&H\in C^\infty\big([0,1]\x M,\R\big)\,\big|\\
\nn&\forall t\in[0,1]:\,\phi_H^t:M\to M\textrm{ is well-defined and surjective}\big\}.
\end{align}
By a \emph{Hamiltonian diffeomorphism} we mean a map $\phi_H^1$, where $H\in\HH(M,\om)$.%
\footnote{Every such map is indeed a diffeomorphism of $M$. To see this, note that the time $t$ flow of a time-dependent vector field on a manifold $M$ is always an injective smooth immersion on its domain of definition. Hence if it is everywhere well-defined and surjective then it is a diffeomorphism of $M$.} 
\begin{defi}[instantaneous displaceability]\label{def:inst disp} Let $k\in\N_0\cup\{\infty\}$. We call two subsets $A$ and $B$ of a symplectic manifold \emph{$C^k$-instantaneously (Hamiltonianly) displaceable}, iff every neighbourhood of the identity in the weak $C^k$-topology contains a Hamiltonian diffeomorphism that displaces $A$ from $B$. 
\end{defi}
As mentioned, $A$ and $B$ as in Theorem \ref{thm:displ} are $C^\infty$-instantaneously displaceable.

By considering the case $A=B$, Theorem \ref{thm:displ} immediately implies the following corollary.
\begin{cor}[instantaneous Hamiltonian displaceability of a critically negligible set]\label{cor:displ} Let $A\sub\R^{2n}$ be a countably $n$-rectifiable and $n$-negligible subset. There exists a linear function $H:\R^{2n}\to\R$, such that for almost every $t\in\R$, we have
\begin{equation}\label{eq:phi t H}\phi^t_H(A)\cap A=\emptyset.\end{equation}
\end{cor}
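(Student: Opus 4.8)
The plan is to read off Corollary \ref{cor:displ} as the diagonal case $B = A$, $m = n$ of Theorem \ref{thm:displ}. Concretely, I would set $m := n$ and let the set denoted $B$ in that theorem be $A$ itself.

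It then only remains to verify the two hypotheses of Theorem \ref{thm:displ} under these choices. The hypothesis that $A$ is countably $m$-rectifiable is, for $m = n$, exactly the countable $n$-rectifiability assumed in the corollary. The hypothesis that the set in the role of $B$ be $(2n - m)$-negligible becomes, again for $m = n$, the requirement that $A$ be $(2n - n) = n$-negligible, which is the other assumption of the corollary. Theorem \ref{thm:displ} therefore produces a linear function $H : \R^{2n} \to \R$ such that $\phi_H^t(A) \cap A = \emptyset$ for almost every $t \in \R$ --- and since $H$ is linear its Hamiltonian flow is globally defined, so $\phi_H^t$ makes sense for all $t$. This is precisely assertion (\ref{eq:phi t H}), finishing the argument.

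I do not expect any genuine obstacle here: the whole mathematical weight sits in Theorem \ref{thm:displ}, and the corollary is a literal specialization. The only point worth a remark in the write-up is conceptual rather than technical, namely that $m = n$ is exactly the value of the rectifiability index for which it equals the negligibility index $2n - m$, which is what permits the single set $A$ to be inserted in both slots of the theorem; this is also why $n$ is the ``critical'' Hausdorff dimension featuring in the title. (When $n = 0$ there is nothing to prove, since the only $0$-negligible subset of $\R^0$ is the empty set.)
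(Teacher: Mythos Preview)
Your proposal is correct and matches the paper's own argument exactly: the paper simply says ``By considering the case $A=B$, Theorem \ref{thm:displ} immediately implies the following corollary,'' which is precisely your specialization $m=n$, $B=A$. The extra remarks you add (about why $m=n$ is the critical index and the trivial case $n=0$) are fine but not needed for the proof.
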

\begin{Rmk}[critical negligibility] We call a subset of a $2n$-dimensional symplectic manifold \emph{critically (Hausdorff-)negligible}, iff its $n$-dimensional outer Hausdorff measure%
\footnote{w.r.t.~some Riemannian metric} 
vanishes. By Corollary \ref{cor:displ}, every countably $n$-rectifiable critically negligible subset of $\R^{2n}$ is $C^\infty$-instantaneously displaceable from itself. For a justification of the term \emph{critical} see Remark \ref{rmk:negli sharp}\reff{rmk:negli sharp:crit} below.
\end{Rmk}
\begin{exple*}[subset satisfying the hypotheses of Corollary \ref{cor:displ}] Let $S\sub\R^n$ be a subset of Lebesgue measure 0, and $F:S\to\R^{2n}$ a locally Lipschitz map. The image $A:=F(S)$ is a countably $n$-rectifiable and $n$-negligible subset of $\R^{2n}$ and therefore satisfies the hypotheses of Corollary \ref{cor:displ}. Here we used Lemma \ref{le:count rect}.
\end{exple*}
\begin{Rmk}[countably $m$-rectifiable and $s$-Hausdorff dimensional subset]Let $\ell\in\N:=\{1,2,\ldots\}$ and $s\in(0,\ell]$. We denote by $\lceil\rceil:\R\to\Z$ the ceiling function and $m:=\lceil s\rceil$. There exists a countably $m$-rectifiable, and $s$-Hausdorff dimensional subset $A$ of $\R^\ell$ with vanishing $s$-dimensional Hausdorff measure. Such a set satisfies the hypothesis of Corollary \ref{cor:displ}, if $\ell=2n$ and $s\leq n$.%
\footnote{Here we use that every $m$-rectifiable metric space is $m'$-rectifiable for every $m'\geq m$.} 
This shows that this corollary is a statement not only about integer-dimensional subsets, but subsets of dimension given by an arbitrary real number between 0 and $n$.

To construct $A$ as above, we choose a countable subset $S$ of the interval $(0,s)$. For every $s'\in[0,\ell]$ there exists a subset of $\R^m$ of Hausdorff dimension $s'$, e.g.~a suitable Cantor dust. This follows from \cite[Theorem 9.3]{Fal}. For each $s'\in S$ we choose such a subset $A_{s'}$. We denote by $0$ the origin in $\R^{2n-m}$ and define
\[A:=\left(\bigcup_{s'\in S}A_{s'}\right)\x\{0\}.\]
This set has the desired properties.
\end{Rmk}

To formulate the next application of Theorem \ref{thm:displ}, let $(M,\om)$ be a symplectic manifold. Recall the definition \eqref{eq:HH M om}. We denote
\[\Vert\cdot\Vert:\HH(M,\om)\to[0,\infty],\quad\Vert H\Vert:=\int_0^1\big(\sup_MH^t-\inf_MH^t\big)dt\]
and define the \emph{displacement energy} of a subset $A\sub M$ to be
\[e(A):=e(A,M,\om):=\inf\big\{\Vert H\Vert\,\big|\,H\in\HH(M,\om):\phi_H^1(A)\cap A=\emptyset\big\}.%
\footnote{Alternatively, one may define a displacement energy by using only functions $H$ with compact support. However, it seems more natural to allow for all functions in $\HH(M,\om)$. For some remarks on this topic see \cite{SZHofer}.}
\]
\begin{cor}[minimal Hausdorff dimension of set with positive displacement energy]\label{cor:inf d} We have
\begin{align*}\Ddisp:=\inf\Big\{&D\in[0,\infty)\,\Big|\,\exists A\sub\R^{2n}:\textrm{ $D$-Hausdorff dimensional,}\\
&\textrm{bounded, countably $\lceil D\rceil$-rectifiable, }e(A)>0\Big\}=n.
\end{align*}
\end{cor}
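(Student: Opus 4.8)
The plan is to prove the two inequalities $\Ddisp \le n$ and $\Ddisp \ge n$ separately. For the upper bound $\Ddisp \le n$, I would exhibit, for every $D > n$ — in fact it suffices to do so for $D$ arbitrarily close to $n$ from above, but the cleanest route is to produce a witness of dimension exactly $n$ — a bounded, $n$-Hausdorff-dimensional, countably $n$-rectifiable subset $A$ of $\R^{2n}$ with $e(A) > 0$. Here I would take $A$ to be a nonempty compact Lagrangian submanifold of $\R^{2n}$, for instance the Clifford torus or more simply a product of $n$ small circles $\mathbb{T}^n \subset \R^{2n}$. Such an $A$ is smooth, hence countably $n$-rectifiable (parametrize by finitely many charts from bounded subsets of $\R^n$), it is bounded, and it has Hausdorff dimension exactly $n$ since it is an $n$-dimensional $C^1$-submanifold. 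Since $(\R^{2n},\omst)$ is geometrically bounded, the displacement energy of every nonempty closed Lagrangian submanifold is positive (the Chekanov-type estimate cited as \cite{Che} in the text), so $e(A) > 0$. Because $\lceil n \rceil = n$, this $A$ is an admissible competitor in the infimum defining $\Ddisp$, giving $\Ddisp \le n$.

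For the lower bound $\Ddisp \ge n$, I would argue by contradiction from Corollary \ref{cor:displ}. Suppose $\Ddisp < n$; then there is $D < n$ and a set $A \subset \R^{2n}$ that is $D$-Hausdorff dimensional, bounded, countably $\lceil D \rceil$-rectifiable, and satisfies $e(A) > 0$. Since $\lceil D \rceil \le n$ and every $m$-rectifiable metric space is $m'$-rectifiable for every $m' \ge m$ (as noted in the footnote in the excerpt), $A$ is countably $n$-rectifiable. Moreover $A$ has Hausdorff dimension $D < n$, so its $n$-dimensional outer Hausdorff measure vanishes; that is, $A$ is $n$-negligible in the sense of Definition \ref{def:neg Hd dim}. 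Thus $A$ satisfies the hypotheses of Corollary \ref{cor:displ}, so there is a linear $H : \R^{2n} \to \R$ with $\phi_H^t(A) \cap A = \emptyset$ for almost every $t \in \R$; in particular we may pick such a $t_0$. Rescaling the Hamiltonian to $\widetilde H := t_0 H$ (or reparametrizing time), $\phi^1_{\widetilde H}(A) \cap A = \emptyset$, and $\widetilde H$, being linear hence smooth with globally defined flow, lies in $\HH(\R^{2n},\omst)$. Since $A$ is bounded, $\widetilde H$ agrees on a neighbourhood of $A \cup \phi^1_{\widetilde H}(A)$ with a compactly supported Hamiltonian (cut off $\widetilde H$ far away without changing the relevant part of the flow), so $\Vert \cdot \Vert$ applied to this cutoff is finite; hence $e(A) < \infty$. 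But more to the point, iterating — $\phi^1_{s\widetilde H}(A) \cap A = \emptyset$ for a positive-measure set of scalars $s$ near $0$, each giving an admissible Hamiltonian of norm tending to $0$ as $s \to 0$ — shows $e(A) = 0$, contradicting $e(A) > 0$. Therefore $\Ddisp \ge n$.

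The step requiring the most care is the reduction in the lower bound from "instantaneously displaceable for a.e.\ $t$ by a linear $H$" to "$e(A) = 0$". The subtlety is that $\Vert H \Vert = \int_0^1 (\sup_M H^t - \inf_M H^t)\,dt$ is infinite for a genuinely linear $H$ on $\R^{2n}$, so one must replace $H$ by a compactly supported Hamiltonian that generates the same flow on the bounded region swept out by $A$ during the relevant time interval, and then verify that the norm of this cutoff can be made arbitrarily small. Concretely, for small $s > 0$ the time-$s$ flow of the linear Hamiltonian moves the bounded set $A$ only a bounded amount, so one can choose the cutoff to have norm $O(s)$; combined with the a.e.-$t$ statement (which guarantees arbitrarily small such $s$ with $\phi^s_H(A) \cap A = \emptyset$), this forces $e(A) = 0$. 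I would spell out the cutoff estimate as the one genuinely non-formal point; everything else (rectifiability of the torus, $\lceil n \rceil = n$, monotonicity of rectifiability, Hausdorff dimension of a submanifold, and the Chekanov energy bound) is either routine or quoted.
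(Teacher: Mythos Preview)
Your proposal is correct and follows essentially the same route as the paper: the upper bound via a closed Lagrangian and Chekanov's energy estimate, and the lower bound via Corollary~\ref{cor:displ} together with a cutoff of the linear Hamiltonian. The paper organizes the cutoff step a bit more cleanly than your sketch: it fixes once and for all a compactly supported $\rho\in C^\infty(\R^{2n})$ equal to $1$ on a neighbourhood of the compact set $K:=\bigcup_{t\in[0,1]}\phi_H^t(\BAR A)$, and then, given $\eps>0$, picks $t\in[0,1]$ from the full-measure displacement set with $t<\eps/\Vert\rho H\Vert$, so that $\wt H:=t\rho H$ satisfies $\phi_{\wt H}^1=\phi_H^t$ on $\BAR A$ and $\Vert\wt H\Vert=t\Vert\rho H\Vert<\eps$; this avoids your two-step ``first pick $t_0$, then rescale by $s$'' detour.
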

For a proof of this corollary see p.~\pageref{proof:cor:inf d}.

Corollary \ref{cor:inf d} provides an answer to the variant of Question \ref{q:displ} in which we only consider bounded subsets that are rectifiable in a suitable sense.
\begin{rmks}[sharpness of negligibility condition, criticality]\label{rmk:negli sharp}\begin{enui}\item\label{rmk:negli sharp:displ} The negligibility condition in Theorem \ref{thm:displ} is sharp in the sense that it does not suffice to assume that $B$ is $s$-negligible for some $s\in\big(2n-m,\infty)$, instead of $s=2n-m$. It does not even suffice to assume that it is $s$-negligible for \emph{every} $s\in\big(2n-m,\infty)$.

To see this, let $A$ and $B$ be submanifolds of $\R^{2n}$ without boundary of dimensions $m$ and $(2n-m)$, that intersect transversely at some point. Then $A$ is countably $m$-rectifiable and $B$ is $s$-negligible for every $s\in\big(2n-m,\infty)$.

However, the set of all continuous self-maps of $\R^{2n}$ that do \emph{not} displace $A$ from $B$, is a neighbourhood of the identity w.r.t.~the compact open topology.%
\footnote{The proof of this is based on the fact that the identity map on the $(m-1)$-dimensional sphere is not homotopic to a constant map.} 
It follows that $A$ and $B$ are not $C^0$-instantaneously Hamiltonianly displaceable. Hence the conclusion of Theorem \ref{thm:displ} does not hold.
\item\label{rmk:negli sharp:cor}Similarly, the negligibility condition in Corollary \ref{cor:displ} is sharp in the sense that it does not suffice to assume that $A$ is $s$-negligible for \emph{every} $s\in\big(n,\infty)$. To see this, we choose $A$ to be the union of two $n$-dimensional submanifolds of $\R^{2n}$ without boundary that intersect transversely in some point. Then $A$ is countably $n$-rectifiable and $s$-negligible for every $s\in(n,\infty)$, but not $C^0$-instantaneously displaceable from itself. This follows from assertion \reff{rmk:negli sharp:displ}.

There are even \emph{submanifolds} with these properties. For example, we may take $A$ to be any nonempty closed Lagrangian submanifold of $\R^{2n}$. See Proposition \ref{prop:Lag not C1 inst displ} below.
\item\label{rmk:negli sharp:crit} By Corollary \ref{cor:displ} and assertion \reff{rmk:negli sharp:cor} the minimal Hausdorff dimension $D$ of a not $C^\infty$-instantaneously displaceable countably $n$-rectifiable subset of $\R^{2n}$ is $D=n$. In this sense the number $n$ is critical. This justifies the term \emph{critically negligible}.
\end{enui}
\end{rmks}

The idea of proof of Theorem \ref{thm:displ} is to consider the displacement vector map
\[\vv:\R^{2n}\times\R^{2n}\to\R^{2n},\qquad\vv(x,y):=y-x.\]
A vector $v\in\R^{2n}$ disjoins $A$ from $B$ iff $v$ does not lie in the image of $A\x B$ under $\vv$. This image has Lebesgue measure 0. This follows from a result of geometric measure theory about the Hausdorff measure of the product of metric spaces. It follows that translation by almost every vector $v$ disjoins $A$ from $B$. Such a translation is the Hamiltonian diffeomorphism induced by some linear function. The conclusion of Theorem \ref{thm:displ} now follows from Fubini's theorem and the Change of Variables Theorem.
\subsection*{Arbitrary symplectic squeezing for a critically negligible set}
We now consider another instance of Question \ref{q:small}. In order to formulate it, we need the following.
\begin{defi}[symplecticity of a map on a subset, arbitrary squeezability]\label{defi:sympl} Let $(M,\om)$ and $(M',\om')$ be symplectic manifolds and $S\sub M$ a subset.
\begin{enui}
\item A map from $S$ to $M'$ is called \emph{(ambiently) symplectic} iff it extends to a symplectic map from some open neighbourhood of $S$ to $M'$.
\item We say that $S$ \emph{injectively (ambiently) symplectically maps into $M'$} iff there is an injective symplectic map from $S$ to $M'$.
\item\label{defi:sympl:emb}We say that $S$ \emph{(ambiently) symplectically embeds into $M'$} iff there exists a symplectic embedding of some open neighbourhood of $S$ into $M'$. 
\item We call a subset of a $2n$-dimensional symplectic manifold \emph{arbitrarily (symplectically) squeezable} iff it injectively symplectically maps into every nonempty $2n$-dimensional symplectic manifold.
\end{enui}
\end{defi}
\begin{rmks}[injective symplectic mappability, arbitrary squeezability]\label{rmk:inj sympl}\phantom{.}
\vspace{-1\baselineskip}
\begin{enui}
\item In this definition we have added the word \emph{ambiently}, as the term \emph{symplectic map} already has a weaker meaning if $S$ is a symplectic submanifold of $M$. In this case every ambiently symplectic map from $S$ to $M'$ is symplectic, but not vice versa. When considering a general subset $S$ of $M$, we leave out \emph{ambiently}, as confusion seems unlikely in this situation.
\item\label{rmk:inj sympl:emb} If $\dim M=\dim M'$, and $S$ is \emph{compact} and injectively symplectically maps into $M'$, then it symplectically \emph{embeds} into $M'$. This follows from Lemma \ref{le:loc inj compact} in the appendix.
\item A subset of a $2n$-dimensional symplectic manifold is arbitrarily squeezable iff it injectively symplectically maps into every open neighbourhood of the origin in $\R^{2n}$. The implication ``$\follows$'' follows from Darboux's theorem.
\end{enui}
\end{rmks}
\begin{q}[small not arbitrarily squeezable set]\label{q:emb} What is the smallest Hausdorff dimension $\dsq$ of a not arbitrarily squeezable subset of $\R^{2n}$?
\end{q}
It follows from \cite[Lemma 9]{SZsqueeze} that $\dsq\geq2$.%
\footnote{This lemma is based on Moser isotopy.} 
Hence in the case $n=1$ we have $\dsq=2$. In the case $n\geq2$ we have $\dsq\leq n$. This follows from \cite[Theorem 1]{SZsqueeze}. Using neighbourhood theorems and Gromov's isosymplectic embedding theorem, it is easy to show that certain isotropic and symplectic submanifolds of $\R^{2n}$ are arbitrarily squeezable.%
\footnote{See Propositions \ref{prop:isotr} and \ref{prop:sympl sq} below.} %
Apart from this, to our knowledge, Question \ref{q:emb} has been completely open so far. Our second main result is the following.
\begin{thm}[arbitrary symplectic squeezing for a critically negligible set]\label{thm:squeeze} Let $n\in\N_0$. Every bounded, countably $n$-rectifiable, and $n$-negligible subset of $\R^{2n}$ is arbitrarily symplectically squeezable.
\end{thm}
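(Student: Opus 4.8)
The plan is to reduce the statement, by soft arguments, to a concrete symplectic embedding problem, and then to solve that problem by symplectic folding. We may assume $n\ge 1$, the case $n=0$ being trivial. By Remark \ref{rmk:inj sympl}(iii) together with Darboux's theorem, it suffices to prove: for every $\eps>0$ there exist an open neighbourhood $U\supseteq A$ in $\R^{2n}$ and a symplectic embedding $\Phi\colon U\hookrightarrow B^{2n}_\eps$. Indeed, $\Phi|_A$ is then an injective map from $A$ into $B^{2n}_\eps$ that extends to a symplectic map on an open neighbourhood of $A$, and every neighbourhood of the origin in $\R^{2n}$ contains some ball $B^{2n}_\eps$.

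\textbf{Step 1: a thin neighbourhood of small volume.} Fix $\eps>0$. Since $A$ is $n$-negligible, for every $\eps'>0$ it admits a countable cover $\{S_j\}$ of sets of small diameter with $\sum_j(\diam S_j)^n<\eps'$. The sets $A_j:=A\cap S_j$ cover $A$; each is a subset of $A$, hence again countably $n$-rectifiable, and $\rho_j:=\diam A_j$ satisfies $\sum_j\rho_j^{\,n}<\eps'$, so $A_j$ lies in a ball $B_j$ of radius $\rho_j$. Here boundedness of $A$ is used, so that the $\rho_j$ (and the local Lipschitz constants below) stay under control. For each $j$ I would use the $n$-rectifiability of $A_j$ — via a local straightening of the Lipschitz pieces (cotangent-type coordinates along the Lagrangian directions of the tangent planes, Darboux-type splitting along their symplectic directions) — together with $\mathcal{H}^n(A_j)=0$, to enclose $A_j$, inside $B_j$, in an open neighbourhood $U_j$ that is symplectomorphic to a subset of a ``long thin box'' $(-\ell_j,\ell_j)^n_x\times(-\delta,\delta)^n_y$, with $\ell_j$ bounded and $\delta>0$ as small as we wish. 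Put $U:=\bigcup_j U_j\supseteq A$; choosing $\eps'$ and $\delta$ small, its total volume is arbitrarily small.

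\textbf{Step 2: folding.} It remains to embed $U$ symplectically into $B^{2n}_\eps$. Inside $B^{2n}_\eps$ the $n$ ``momentum'' directions carry a reserve of order $(\eps/\delta)^n$ beyond the thickness $\delta$. For each piece I would accordion-fold the position directions one at a time: a fold is a symplectomorphism that, roughly, cuts the box along a hyperplane $\{x_i=\mathrm{const}\}$ and lays one part over the other in a fresh momentum layer, trading position-extent for momentum-extent at constant volume. After enough folds in each of the $n$ position directions the position-extent of the image drops below $\eps$, and since the number of available folds tends to $\infty$ as $\delta\to 0$, for $\delta$ small enough the image of $U_j$ lies in a cube of side $<\eps$, hence in $B^{2n}_\eps$ after a harmless rescaling of $\eps$ at the outset. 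Finally I would distribute the folded pieces into pairwise disjoint regions of $B^{2n}_\eps$ — possible because after folding they have small diameter and their total volume is a small fraction of $\vol\big(B^{2n}_\eps\big)$ — and realise this distribution by a symplectomorphism routing the pieces along disjoint tubes. Composing all of these symplectomorphisms yields $\Phi\colon U\hookrightarrow B^{2n}_\eps$, and the reduction at the start then finishes the proof.

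\textbf{Expected main obstacle.} The heart of the argument is Step 2: (i) the fold maps are only piecewise affine a priori, and turning them into genuine smooth symplectomorphisms requires rounding the creases, which costs a little extra room — available precisely because the boxes are thin; (ii) one must carefully bookkeep all $2n$ coordinates so that successive folds have nested domains and the final image lands in the \emph{round} ball rather than merely a box; and (iii) when infinitely many pieces $U_j$ occur, the countably many moves must be organised so that their composition is a well-defined symplectomorphism — this is where the summability $\sum_j\rho_j^{\,n}<\infty$ coming from $n$-negligibility is essential. A secondary but also non-routine point is the geometric preparation in Step 1: the tangent planes of an $n$-rectifiable set are in general neither Lagrangian nor symplectic, so the straightening into foldable normal form must be performed direction by direction, exploiting that $A_j$ meets only an $\mathcal{H}^n$-null set so that its neighbourhood may be taken arbitrarily thin.
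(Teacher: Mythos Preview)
Your proposal has a genuine gap: it misses the mechanism that makes the folding work here, namely Theorem~\ref{thm:displ}. The paper does \emph{not} attempt to symplectically embed an open neighbourhood of $A$; it only constructs a symplectic map on a neighbourhood that is required to be injective \emph{on $A$}, nothing more. Concretely (Lemma~\ref{le:half}), one folds $A$ in a single $\R^2$-factor: a shear $\psi\x\id$ pushes the upper half $A_+$ down onto the lower half $A_-$, and in general $(\psi\x\id)(A_+)$ \emph{does} intersect $A_-$. The crucial step is to compose with a Hamiltonian diffeomorphism $C^\infty$-close to the identity that displaces $(\psi\x\id)(A_+)$ from $A_-$; such a diffeomorphism exists precisely by Theorem~\ref{thm:displ}, because $(\psi\x\id)(A_+)$ is countably $n$-rectifiable and $A_-$ is $n$-negligible. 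Iterating this fold roughly halves the area of the first $\R^2$-factor each time, and one then cycles through the $n$ factors. No thin-box normal form, no decomposition into pieces, and no patching is needed.

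Your route instead aims for a genuine symplectic embedding of an open neighbourhood $U\supseteq A$ into a small ball, and to fold $U$ you must first carry out Step~1, the thin-box preparation. But Step~1 is not justified and is essentially as hard as the theorem itself. You invoke ``cotangent-type coordinates along the Lagrangian directions of the tangent planes'', yet an $\HH^n$-null set carries no usable tangent-plane information (an $\HH^n$-a.e.\ statement is vacuous on a null set), and even along a single Lipschitz piece the approximate tangent plane varies and may be Lagrangian at one point and symplectic at another, so there is no uniform product splitting into ``position'' and ``momentum'' directions that would make your accordion fold applicable. The patching in Step~2(iii) is a further problem: the pieces $A_j$ need not be separated (take $A$ connected), so the neighbourhoods $U_j$ overlap and you cannot assemble the distinct symplectomorphisms into a single symplectic map on $U=\bigcup_j U_j$.
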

This theorem implies that $\Dsq\geq n$ and therefore
\[\Dsq=n,\]
where $\Dsq$ is the variant of $\dsq$ in which we only allow for subsets of $\R^{2n}$ that are bounded and countably $n$-rectifiable. This answers a variant of Question \ref{q:emb}.

\begin{rmk}[sharpness of the negligibility condition]\label{rmk:negli}The negligibility condition of Theorem \ref{thm:squeeze} is sharp in the sense that it does not suffice to assume that the subset is $s$-negligible for some real number $s\in(n,\infty)$, instead of $n$-negligible. It does not even suffice to assume that it is $s$-negligible for \emph{every} $s\in(n,\infty)$. This follows from Theorem \ref{thm:nonsqueezable} below.
\end{rmk}

\label{idea:thm:squeeze} Theorem \ref{thm:squeeze} is an application of Theorem \ref{thm:displ}. It is based on a lemma stating that every countably $n$-rectifiable and $n$-negligible subset $A$ of the product of an open (two-dimensional) rectangle $Q$ and a compact subset $K$ of $\R^{2n-2}$ injectively symplectically maps into $R\x U$, where $R$ is a rectangle of slightly more than half the size of $Q$, and $U$ is an arbitrary neighbourhood of $K$.

This follows from a folding construction, which we now sketch. We choose a small $\de>0$ and denote by $V_\de$ the rectangle $(0,1)\x(-1,1)$ with some $\de$-thickened slit removed, see Figure \ref{fig:V de proj A}. By rescaling, and using some two-dimensional symplectic embedding, we may assume \Wlog that $A$ is contained in $V_\de\x K$.

We choose a symplectomorphism $\psi$ of $\R^2$ that pushes down the upper right part of the rectangle $(0,1)\x(-1,1)$ to its lower right part. See Figure \ref{fig:psi folded}. We denote by $A_\pm$ the upper and lower parts of $A$, see Figure \ref{fig:V de proj A}. By Theorem \ref{thm:displ} there exists a linear function $H:\R^{2n}\to\R$ that $C^\infty$-instantaneously Hamiltonianly displaces $(\psi\x\id)(A_+)$ and $A_-$ from each other. By suitably cutting off this function, we obtain a function $\wt H$.

There exists a small $t$, such that the Hamiltonian flow $\phi_{\wt H}^t$ displaces $(\psi\x\id)(A_+)$ from $A_-$. The map given by $\phi_{\wt H}^t\circ(\psi\x\id)$ on $A_+$ and by the identity on $A_-$ maps $A$ injectively and symplectically into a slightly enlarged copy of the rectangle $(0,1)\x(-1,1)$ with the upper right part removed. By composing this map with the product of a suitable two-dimensional symplectic embedding and the identity, we obtain an injective symplectic map from $A$ into $R\x U$, as desired. This finishes the sketch of the folding construction.

In the proof of Theorem \ref{thm:squeeze} we will apply this folding construction several times to each factor of the product $\R^{2n}=\R^2\x\cdots\x\R^2$. We may view this as ``crumbling up'' the given subset of $\R^{2n}$.

\begin{Rmk}[folding and perturbing]The map $\psi\x\id$ folds $A$ back. The image of $A_+$ under $\psi\x\id$ may intersect $A_-$, see Figure \ref{fig:psi folded}. The trick in the folding construction is to remove this intersection by perturbing the map $\psi\x\id$. This means that we compose this map with the Hamiltonian diffeomorphism $\phi_{\wt H}^t$, which is $C^\infty$-close to the identity.
\end{Rmk}
\begin{figure}
\centering

	\def\svgwidth{1\columnwidth}
	\import{./figures/}{squeezing1.pdf_tex}

\caption{The set $V_\de$ and the projections of $A$ and $A_\pm$ to the first factor in $\R^{2n}=\R^2\x\cdots\x\R^2$.}
\label{fig:V de proj A}
\end{figure}

\begin{figure}
\centering

	\def\svgwidth{1\columnwidth}
	\import{./figures/}{squeezing2.pdf_tex}

\caption{\emph{Left:} The map $\psi$. \emph{Right:} The map $\psi\x\id$ folds $A_+$ back, possibly making it intersect $A_-$.}
\label{fig:psi folded}
\end{figure}

\subsection*{Philosophical conclusion}
It is a philosophical principle that below half the dimension of a given symplectic manifold, symplectic geometry becomes flexible. Questions \ref{q:displ} and \ref{q:emb} put displacement and squeezing versions of this principle into the precise framework of subsets and Hausdorff dimension. Corollary \ref{cor:inf d} and Theorem \ref{thm:squeeze} confirm the principle in this framework.

Usually attention is restricted to \emph{submanifolds} of symplectic manifolds, which have \emph{integer} dimension. In contrast, in the setting of Corollary \ref{cor:inf d} and Theorem \ref{thm:squeeze} we consider arbitrary rectifiable \emph{subsets}, which may have any \emph{real} (Hausdorff) dimension. These results are sharp in terms of the real-valued Hausdorff dimension of the subset, and not just in terms of the integer dimension of some submanifold.
\subsection*{Potential application of our folding construction: Small sets are nonbarriers.}
We expect that the folding construction of the proof of Theorem \ref{thm:squeeze} can be modified to prove the following conjecture. We denote by $B^n_r$ the open ball in $\R^n$ of radius $r$ around $0$.
\newpage
\begin{conj*}Let $n\in\N_0$ and $A$ be a relatively closed subset of $B^{2n}_1$ that satisfies one of the following conditions:
\begin{enua}
\item $A$ is countably $(n-1)$-rectifiable.
\item $A$ is countably $n$-rectifiable and $(n-1)$-negligible.
\end{enua}
Then the Gromov width of $B^{2n}_1\wo A$ equals $\pi$.
\end{conj*}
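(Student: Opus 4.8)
The plan is to reduce the conjecture to a statement about the thin set $A$ alone and then attack that statement by the folding construction from the proof of Theorem~\ref{thm:squeeze}. First, monotonicity of the Gromov width under inclusion gives the easy half: since $B^{2n}_1\wo A\sub B^{2n}_1$, the Gromov width of $B^{2n}_1\wo A$ is at most $\pi$. For the reverse bound it suffices to prove: for every $\eps>0$ there is a symplectomorphism $\Phi:B^{2n}_1\to B^{2n}_1$ (or a symplectic isotopy thereof, obtained by cutting off the Hamiltonians produced below) with $\Phi(A)\cap B^{2n}_{1-\eps}=\emptyset$. Indeed, then $\Phi^{-1}\big(B^{2n}_{1-\eps}\big)$ is a symplectically embedded ball of capacity $\pi(1-\eps)^2$ contained in $B^{2n}_1\wo A$, so the Gromov width of $B^{2n}_1\wo A$ is at least $\sup_{\eps>0}\pi(1-\eps)^2=\pi$. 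In other words, I want to push the thin set $A$ out of a ball of almost full capacity while staying inside $B^{2n}_1$.

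To push $A$ outward I would crumple it by iterating the folding move of Theorem~\ref{thm:squeeze}: split $\R^{2n}=\R^2\x\cdots\x\R^2$ and, one factor at a time, use a two-dimensional symplectic chart in which a piece of $A$ lies in a thin slit region of the type $V_\de\x K$ from the sketch following Theorem~\ref{thm:squeeze}, and then fold that piece back over the slit by a map $\phi^t_{\wt H}\circ(\psi\x\id)$. As there, $\psi\x\id$ folds the piece into a region of roughly half the width and the $C^\infty$-small Hamiltonian $\phi^t_{\wt H}$ removes the overlap that folding creates. The crucial difference with Theorem~\ref{thm:squeeze} is that all of this must now take place \emph{ambiently} inside the fixed ball $B^{2n}_1$, and the successive folds must drive $A$ toward $\dd B^{2n}_1$. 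The hypothesis is what buys the room for this: since $A$ is countably $(n-1)$-rectifiable in case~(a), or countably $n$-rectifiable and $(n-1)$-negligible in case~(b), every set built from $A$ by Lipschitz maps is $(n+1)$-negligible — in case~(b) this uses the product estimate for Hausdorff measures, exactly as in the proof of Theorem~\ref{thm:displ} — so Theorem~\ref{thm:displ} applies with $m=n-1$ rather than $m=n$, and one may displace $A$-pieces off sets that are only $(n+1)$-negligible. This is one ``dimension'' more of slack than in Theorem~\ref{thm:squeeze}, which should be precisely what is needed to carry out the folding corrections without leaving $B^{2n}_1$ and while steering $A$ into the boundary shell $\{\,|z|>1-\eps\,\}$.

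I expect the main obstacle to be making the folding construction \emph{ambient}. In Theorem~\ref{thm:squeeze} the folding map is defined only on a neighbourhood of $A$ and need not extend to a symplectomorphism of any larger region, whereas here one must produce honest Hamiltonian isotopies of $B^{2n}_1$, with support in small balls, that implement the same effect on $A$; moreover one must control their cumulative effect so that after infinitely many folds $A$ genuinely lies in $\{\,|z|>1-\eps\,\}$, which requires summable isotopy times, a smooth limit, and an arrangement of the folds that consistently moves $A$ outward rather than merely shuffling it. A genuinely new idea beyond the proof of Theorem~\ref{thm:squeeze} is likely needed for this ambient upgrade. Two subsidiary points must also be handled: covering $A$ by countably many Lipschitz images of subsets of $\R^{n-1}$ (case~(a)) and straightening these pieces, one or a few at a time, into the factor being folded while keeping track of how the charts overlap; and reducing case~(b) to the same estimates as case~(a), which works because $(n-1)$-negligibility is exactly what makes the displacement argument of Theorem~\ref{thm:displ} available with the required codimension. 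Everything else should be a substantial but essentially technical elaboration of the folding method already developed in this paper.
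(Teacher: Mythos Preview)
This statement is a \emph{conjecture} in the paper; the paper does not prove it but only outlines an approach and explicitly flags the unresolved difficulty. Your proposal is likewise a sketch, and you say so. So the relevant comparison is between two heuristics.

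Your overall plan --- reduce to pushing $A$ into a thin shell near the boundary by an ambient Hamiltonian diffeomorphism, and do this by iterating the fold from Theorem~\ref{thm:squeeze} --- matches the paper's. The substantive difference is in \emph{how} the extra dimension of slack (the $(n-1)$ in the hypotheses, versus the $n$ in Theorem~\ref{thm:squeeze}) is spent. The paper is specific: replace the single fold $\psi$ by a Hamiltonian \emph{isotopy} $(\psi_t)_{t\in[0,1]}$ on $\R^2$, and use Theorem~\ref{thm:displ} to displace the entire \emph{trace} $(\psi\x\id)\big([0,1]\x A_+\big)$ from $A_-$. The trace has one more dimension of rectifiability than $A_+$, which is exactly why the hypothesis must drop from $n$ to $n-1$; and displacing the whole trace, not just the endpoint, is what lets the fold be realised by a compactly supported Hamiltonian diffeomorphism rather than a bare symplectic map on a neighbourhood of $A$. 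Your account of where the slack goes --- ``displace $A$-pieces off sets that are only $(n+1)$-negligible'' via Theorem~\ref{thm:displ} with $m=n-1$ --- does not name this mechanism, and in case~(b) it does not parse: there $A$ is only $n$-rectifiable, so one cannot take $m=n-1$; in the paper's scheme the trace is $(n+1)$-rectifiable and $A_-$ is $(n-1)$-negligible, so Theorem~\ref{thm:displ} is invoked with $m=n+1$.

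Two smaller points. You worry about infinitely many folds, summable times, and a smooth limit; the paper's plan uses only \emph{finitely} many folds to shrink $A$ into an arbitrarily small box, followed by a single Hamiltonian diffeomorphism moving that box near $\dd B^{2n}_1$. And the paper adds one concrete ingredient you omit: before folding, apply a Hamiltonian diffeomorphism that creates space between $A$ and $\dd B^{2n}_1$, so that the ambient folds have room to operate inside the ball.

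Both the paper and you correctly identify the same genuine gap --- making the folds ambient in $B^{2n}_1$ --- and neither closes it. The conjecture remains open.
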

Building on the terminology in P.~Biran's article \cite{Bir}, we define a \emph{nonbarrier} for a given symplectic manifold $(M,\om)$ to be a subset $A$ of $M$ such that $M\wo A$ and $M$ have the same Gromov width. The statement of the above conjecture implies that every subset $A$ as in the hypothesis of the conjecture is a nonbarrier. For $n=2$ a stronger version of the conjecture was proved by K.~Sackel, A.~Song, U.~Varolgunes, and J.~J.~Zhu in \cite[Theorem 1.4]{SSVZ}.

On the other hand, in \cite[Theorem 1.D]{Bir} P.~Biran showed that certain Lagrangian CW-complexes are \emph{Lagrangian} barriers. For recent work on Lagrangian barriers see \cite[Theorem 1.3]{SSVZ}, the work \cite[Theorem 1.5]{BS} of J.~Brendel and F.~Schlenk, and the work \cite{OS} of E.~Opshtein and F.~Schlenk. In \cite[Theorem 1.3]{HKHO} P.~Haim-Kislev, R.~Hind, and Y.~Ostrover showed that certain finite unions of symplectic subspaces of $\R^{2n}$ of codimension two are \emph{symplectic} barriers.

Our idea of proof for the conjecture is to modify the folding construction of the proof of Theorem \ref{thm:squeeze} (as outlined on p.~\pageref{idea:thm:squeeze}), by considering a Hamiltonian isotopy $\psi:[0,1]\x\R^2\to\R^2$ %
\footnote{instead of a single symplectomorphism of $\R^2$} 
that pushes down the upper right part of the rectangle $(0,1)\x(-1,1)$. We apply Theorem \ref{thm:displ} to obtain a Hamiltonian diffeomorphism that is $C^\infty$-close to $\id$ and displaces $\psi\x\id([0,1]\x A_+)$ from $A_-$. We then use a cut off argument to obtain a compactly supported Hamiltonian diffeomorphism that folds $A$ into a box of roughly half the original size.

By iterating this modified folding construction, we obtain a Hamiltonian diffeomorphism that maps $A$ into an arbitrarily small open set. We move this set close to the boundary of the ball $B^{2n}_1$ via some Hamiltonian diffeomorphism. The statement of the conjecture should now follow.

One challenge of this modified construction is to obtain a symplectomorphism of the \emph{ball} $B^{2n}_1$ that maps $A$ into some small open set, rather than a symplectomorphism of $\R^{2n}$. For this we need to apply a suitable Hamiltonian diffeomorphism that creates some space outside of $A$ and close to the boundary of $B^{2n}_1$. 
\subsection*{Organization of this article}
In Section \ref{sec:rel} we formulate some results that are related to Corollary \ref{cor:displ}, Theorem \ref{thm:squeeze}, and Question \ref{q:small} in general. In Section \ref{sec:proof main} we prove Theorems \ref{thm:displ} and \ref{thm:squeeze} and Corollary \ref{cor:inf d}, and in Section \ref{sec:proof prop} those results from Section \ref{sec:rel} that are not proved elsewhere. In the Appendix in Section \ref{sec:count rect loc inj} we prove Lemmas \ref{le:count rect} and \ref{le:loc inj compact}, which were used in Section \ref{sec:main}.
\subsection*{Acknowledgments}
We thank \'Alvaro del Pino G\'omez for mentioning an alternative approach for the proof of Theorem \ref{thm:inf displ} below and for suggesting to use the isosymplectic embedding theorem to prove Proposition \ref{prop:sympl sq} below. We also thank Felix Schlenk for some explanations regarding the isosymplectic embedding theorem and for feedback on a previous version of this article.

\section{Related results}\label{sec:rel}
\subsection*{Results related to Corollary \ref{cor:displ}}
Let $n\in\N_0$. The next result is a version of Corollary \ref{cor:displ} for a subset of $\R^{2n}$ given by a submanifold of subcritical dimension%
\footnote{By this we mean dimension strictly less than $n$.} 
. It has been known for a long time. It follows e.g.~from \cite[Theorem 1.1]{Gur}.%
\footnote{\cite[Theorem 1.1]{Gur} states the following: Let $N$ be a closed connected submanifold of a symplectic manifold, such that $N$ is nowhere coisotropic and its normal bundle admits a non-vanishing section. Then $N$ is infinitesimally Hamiltonianly displaceable. Theorem \ref{thm:inf displ} follows from this and the fact that a smooth vector bundle admits a nonvanishing section, if its rank is bigger than the dimension of its base. (See \cite[II.15.3, p.~115]{Bre}.)}
\begin{thm}[infinitesimal displaceability of submanifold of subcritical dimension]\label{thm:inf displ} Let $(M,\om)$ be a symplectic manifold of dimension $2n$ and $N\sub M$ a closed submanifold of dimension strictly less than $n$. Then $N$ is \emph{infinitesimally (Hamiltonianly)} displaceable, i.e., there exists a Hamiltonian vector field on $M$ that is nowhere tangent to $N$. 
\end{thm}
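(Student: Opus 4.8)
The plan is to recast the statement as a general-position (transversality) problem. With the normalization $\iota_{X_H}\om=dH$, the Hamiltonian vector field $X_H$ is tangent to $N$ at a point $p$ exactly when $dH_p$ lies in $\Lambda_p:=\{\iota_v\om_p\mid v\in T_pN\}$. Since $\om_p$ is nondegenerate, $v\mapsto\iota_v\om_p$ is injective, so $\dim\Lambda_p=k:=\dim N$, and $\Lambda:=\bigcup_{p\in N}\Lambda_p$ is a smooth rank-$k$ subbundle of $E:=T^*M|_N$ — it is the image of the subbundle $TN\sub TM|_N$ under the bundle isomorphism $v\mapsto\iota_v\om$. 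Hence ``$X_H$ is nowhere tangent to $N$'' says that the section $p\mapsto[dH_p]$ of the quotient bundle $E/\Lambda$ is nowhere zero. Because $\operatorname{rank}(E/\Lambda)=2n-k>k=\dim N$ — and this inequality is precisely the subcriticality hypothesis $k<n$ — nowhere-zero sections of $E/\Lambda$ exist in abundance; the whole issue is to produce one that is induced by an actual function $H$.

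To produce one, I would use transversality. As a submanifold of the total space of $E$, the bundle $\Lambda$ has dimension $2k$, whereas the total space of $E$ has dimension $2n+k$. Now consider the evaluation map $(H,p)\mapsto(p,dH_p)$ from $C^\infty(M)\x N$ to $E$. It is a submersion: for fixed $p$ the linear map $H\mapsto dH_p$ already maps onto $T^*_pM$, since every covector at $p$ is the differential at $p$ of some function (e.g.\ a cut-off linear coordinate function), and varying $p$ takes care of the base directions. So, by the parametric transversality theorem — which, since $N$ is compact and only the $1$-jets of $H$ along $N$ matter, may be applied to a sufficiently large \emph{finite}-dimensional family of functions, such as cut-offs of the coordinate functions of a finite Darboux atlas of a neighbourhood of $N$ — a generic $H$ makes the section $\si_H\colon p\mapsto(p,dH_p)$ transverse to $\Lambda$. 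But $\dim N+\dim\Lambda=3k<2n+k=\dim E$ exactly because $k<n$, and transversality then forces $\si_H(N)\cap\Lambda=\emptyset$. Any such $H$ has $dH_p\notin\Lambda_p$ for every $p\in N$, i.e.\ $X_H$ is nowhere tangent to $N$, as desired.

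The main obstacle — and the reason a purely soft argument fails — is that the sections $p\mapsto dH_p$ are \emph{not} arbitrary sections of $E$: their restriction to $TN$ is forced to be the exact $1$-form $d(H|_N)$, so one cannot simply approximate a chosen nowhere-zero section of $E/\Lambda$ by differentials of functions. The transversality count above is exactly what shows that this constrained family nonetheless contains a section missing $\Lambda$; the one genuinely technical point is to run the parametric transversality argument cleanly (either directly in the $C^\infty$ category or via the finite-dimensional reduction indicated above). As a sanity check, the count is sharp: for $k=n$ one gets $\dim N+\dim\Lambda=3n=\dim E$, so generically $\si_H$ still meets $\Lambda$ in isolated points, i.e.\ isolated tangencies that cannot in general be removed — consistent with the fact that closed Lagrangians are not infinitesimally displaceable. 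Finally, one could bypass transversality entirely and invoke \cite[Theorem 1.1]{Gur}: a submanifold of subcritical dimension is automatically nowhere coisotropic, since a coisotropic subspace of a $2n$-dimensional symplectic vector space has dimension at least $n$; and its normal bundle, having rank $2n-k>k=\dim N$, admits a nowhere-vanishing section. That theorem then gives the conclusion (applied to each connected component of $N$, the resulting Hamiltonians being combined after cutting them off near the respective components).
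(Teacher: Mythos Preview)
Your proposal is correct. The paper does not give its own proof of this theorem; it simply cites \cite[Theorem 1.1]{Gur} and verifies the two hypotheses of that result (nowhere coisotropic, normal bundle with a nonvanishing section), exactly as you do in your final paragraph. So the route you mention last \emph{is} the paper's route.

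Your main argument, however, is genuinely different: a direct parametric-transversality count showing that for generic $H$ the section $p\mapsto dH_p$ of $E=T^*M|_N$ misses the rank-$k$ subbundle $\Lambda=\om^\flat(TN)$, the key inequality $\dim N+\dim\Lambda=3k<2n+k=\dim E$ being precisely $k<n$. This is more self-contained than invoking \cite{Gur}; in effect you are re-deriving the special case of G\"urel's theorem that is needed here, and the compactness of $N$ lets you reduce to a finite-dimensional parameter family so that ordinary Sard applies. What the citation buys is brevity and a stronger statement (G\"urel's hypotheses ``nowhere coisotropic'' plus ``normal bundle has a nonvanishing section'' are strictly weaker than ``$\dim N<n$''); what your transversality argument buys is independence from that reference and a transparent explanation of why $n$ is the threshold, including the sharpness remark that at $k=n$ the dimension count becomes an equality and isolated tangencies generically persist.
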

\begin{Rmk}[infinitesimal versus instantaneous displaceability and displacement energy]If a closed submanifold of $M$ is infinitesimally displaceable then it is $C^\infty$-instantaneously displaceable%
\footnote{See Definition \ref{def:inst disp}.} 
 and its displacement energy vanishes. This follows from an elementary argument involving the flow of an infinitesimally displacing Hamiltonian vector field, and the tubular neighbourhood theorem.
\end{Rmk}
For a submanifold $N$ of dimension $n$ the conclusion of Theorem \ref{thm:inf displ} remains true, provided that $N$ is closed, connected, and somewhere non-Lagrangian, and the normal bundle of $N$ admits a nonvanishing section. This was proved by F.~Laudenbach and J.-C.~Sikorav \cite[Theorem 1]{LS}. If $TN$ also has a Lagrangian complement then the same statement follows from L.~Polterovich's result \cite[Theorem 1.2]{Pol}.

\begin{Rmk} On the other hand, let $(M,\om)$ be a symplectic manifold and $L\sub M$ a nonempty closed Lagrangian submanifold. We claim that $L$ is not infinitesimally displaceable. To see this, let $H\in C^\infty(M,\R)$. The claim is a consequence of the following facts:
\begin{itemize}\item The restriction $f:=H|_L$ attains its extrema, which are critical points of $f$.
\item At each critical point of $f$ the Hamiltonian vector field $X_H$ is tangent to $L$.
\end{itemize}
\end{Rmk}

The next result states that certain Lagrangians are not \emph{instantaneously} displaceable, either.
\begin{prop}[instantaneous non-displaceability of Lagrangian]\label{prop:Lag not C1 inst displ} Every nonempty closed Lagrangian submanifold of $\big(\R^{2n},\omst\big)$ is not $C^1$-instantaneously displaceable. 
\end{prop}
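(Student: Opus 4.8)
The plan is to show that any Hamiltonian diffeomorphism sufficiently $C^1$-close to the identity still intersects $L$, using the variational structure of Lagrangians in $\big(\R^{2n},\omst\big)$. The key tool is that for a $C^1$-small Hamiltonian diffeomorphism $\phi = \phi_H^1$, the graph of $\phi$ in $\R^{2n}\x\R^{2n}$ (with an appropriate symplectic/product structure) is $C^1$-close to the diagonal, and restricting attention to a Weinstein neighbourhood of $L$ inside $(\R^{2n},\omst)$ identified with a neighbourhood of the zero section in $T^*L$, the image $\phi(L)$ becomes the graph of a $C^1$-small closed (in fact exact, since $H^1_{dR}$ considerations or the fact that $\phi$ is Hamiltonian force exactness) one-form $\alpha$ on $L$. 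Then $\phi(L)\cap L \neq \emptyset$ is equivalent to $\alpha$ having a zero, which holds because $\alpha = df$ for a smooth function $f$ on the closed manifold $L$, and $f$ attains a critical point.

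First I would invoke Weinstein's Lagrangian neighbourhood theorem: there is a symplectomorphism from a neighbourhood $\U$ of $L$ in $(\R^{2n},\omst)$ onto a neighbourhood of the zero section $0_L$ in $(T^*L, \om_{can})$, carrying $L$ to $0_L$. Second, I would make precise the statement that if $\phi$ is $C^1$-close enough to $\id$ (in the weak $C^1$-topology), then $\phi(L)$ lies in $\U$ and, under the Weinstein identification, is the graph of a $C^1$-small one-form $\alpha \in \Omega^1(L)$; this is a standard compactness argument using that $L$ is compact and that the graph map $\alpha \mapsto \{(x,\alpha_x)\}$ is a diffeomorphism onto a neighbourhood of $0_L$ among Lagrangian sections. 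Third, I would argue $\alpha$ is exact: since $\phi = \phi_H^1$ is Hamiltonian, $\phi(L)$ is Hamiltonianly isotopic to $L$, hence $[\alpha] = 0$ in $H^1(L;\R)$ (the flux/Liouville class of a Hamiltonian image of a Lagrangian vanishes); write $\alpha = df$. Fourth, $L$ being compact without boundary, $f$ attains its maximum at some $p\in L$, so $df_p = \alpha_p = 0$, which means the point of $L$ over $p$ lies on $\phi(L)$, i.e.\ $\phi(L)\cap L\neq\emptyset$. Since this holds for all $\phi$ in some weak-$C^1$ neighbourhood of $\id$, no such $\phi$ displaces $L$ from itself, proving $L$ is not $C^1$-instantaneously displaceable.

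I should be slightly careful about one technical point: the hypothesis only gives $H \in \HH(\R^{2n},\omst)$ with $\phi_H^1$ close to $\id$; I do not directly control the isotopy $\phi_H^t$, only the time-one map. So the exactness of $\alpha$ should be derived not from the isotopy but from the fact that a $C^1$-small Lagrangian section is automatically graphical and its cohomology class varies continuously with the section, combined with the observation that Hamiltonian diffeomorphisms of $\R^{2n}$ act trivially on the relevant cohomology — or, more robustly, I can argue directly: the class $[\alpha]\in H^1(L;\R)$ depends continuously on $\phi$ in the $C^1$-topology and vanishes at $\phi = \id$; but $[\alpha]$ takes values in the (discrete) image of the period homomorphism only if $\phi(L)$ is required to be \emph{embedded monotone} or similar — that is not automatic. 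The cleanest fix is to use that $\phi_H^1$ is a \emph{Hamiltonian} diffeomorphism: writing $\phi_H^t(L)$ as a path of Lagrangians, its flux is exact by construction, so even though we only care about $t=1$, the definition of $\HH$ does supply the full isotopy, giving $[\alpha] = 0$ genuinely. Alternatively one reduces to the compactly supported case and quotes a standard reference.

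The main obstacle I anticipate is the second step: rigorously passing from ``$\phi$ is $C^1$-close to $\id$'' to ``$\phi(L)$ is a $C^1$-small graph over $0_L$ in the Weinstein chart.'' One must control that $\phi(L)$ stays inside the Weinstein neighbourhood (fine by $C^0$-closeness and compactness of $L$) \emph{and} that it remains a section, i.e.\ the projection $\phi(L)\to L$ is a diffeomorphism (this uses $C^1$-closeness: the tangent spaces of $\phi(L)$ stay close to those of $0_L$, so the projection is a local diffeomorphism, and being a $C^1$-small perturbation of the identity it is bijective). Making the quantifiers in Definition \ref{def:inst disp} (every weak $C^1$-neighbourhood of $\id$) mesh correctly with these estimates is the part requiring genuine care; everything else is a routine application of Weinstein's theorem and the elementary fact that a function on a closed manifold has a critical point.
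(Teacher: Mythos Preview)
Your proposal is correct and takes a genuinely different, more elementary route than the paper.

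The paper reduces to Gromov's theorem that the zero section in $T^*L$ is not Hamiltonianly displaceable (a $J$-holomorphic curve result). Its main technical work is a ``$C^1$-close symplectic isotopy lemma'': given only that the time-$1$ map $\phi$ is $C^1$-close to $\id$, the authors construct (via Weinstein's theorem applied to the diagonal in $M\times M$) a \emph{new} symplectic isotopy from $\id$ to $\phi|_{K_0}$ that stays $C^1$-small throughout, on a compact neighbourhood $K_0\supset L$ chosen with $H^1_{\mathrm{dR}}(K_0)=0$ so that this isotopy is automatically Hamiltonian. Transported through the Weinstein chart for $L$, this yields a Hamiltonian isotopy of the zero section inside $T^*L$, and Gromov's theorem then forces an intersection.

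Your argument is the classical ``graph of a closed one-form'' approach: $\phi(L)=\mathrm{graph}(\alpha)$ with $\alpha$ exact, and a critical point of a primitive gives the intersection. This avoids Gromov's theorem entirely and in principle yields more (e.g.\ cuplength or Morse lower bounds on $\#(\phi(L)\cap L)$). The subtlety you correctly isolate --- exactness of $\alpha$ when only the time-$1$ map is controlled --- is resolvable along the lines you indicate: since $\omst=d\lambda$ is exact and $\phi$ is Hamiltonian, $\phi^*\lambda-\lambda$ is exact, so the Liouville classes of $L$ and $\phi(L)$ agree; combined with the fact that the projection $\phi(L)\to L$ in the Weinstein chart composed with $\phi|_L$ is $C^0$-close to $\id_L$ and hence acts trivially on $H^1(L)$, one gets $[\alpha]=0$. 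Both proofs face the same obstacle (the given Hamiltonian isotopy need not stay in the Weinstein neighbourhood), but you dispose of it by a Liouville-class computation, whereas the paper manufactures a replacement isotopy and then invokes hard analysis.
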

For a proof of this proposition see Section \ref{sec:proof prop:Lag}. The proof is based on a result of M.~Gromov, which states that a Hamiltonian isotopy does not displace the zero-section of the cotangent bundle of a nonempty closed manifold. We reduce to the cotangent bundle via Weinstein's Lagrangian neighbourhood theorem.
\subsection*{Results related to Theorem \ref{thm:squeeze}}
Let $n\in\N_0$. The next result is a variant of Theorem \ref{thm:squeeze} for a bounded \emph{isotropic submanifold} of $\R^{2n}$ without boundary. It implies that every such submanifold is arbitrarily squeezable.
\begin{prop}[arbitrary squeezing for isotropic submanifold]\label{prop:isotr} Every bounded%
\footnote{as a subset of $\R^{2n}$} 
isotropic submanifold of $\R^{2n}$ without boundary ambiently symplectically embeds%
\footnote{See Definition \ref{defi:sympl}\reff{defi:sympl:emb}.} %
into every nonempty symplectic manifold of dimension $2n$.
\end{prop}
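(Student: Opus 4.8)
The plan is to prove Proposition \ref{prop:isotr} by combining Gromov's isosymplectic embedding theorem with the standard tubular/Weinstein neighbourhood theorem for isotropic submanifolds. Let $N\subseteq\R^{2n}$ be a bounded isotropic submanifold without boundary, say of dimension $k\le n$, and let $(M,\om)$ be a nonempty symplectic manifold of dimension $2n$. First I would recall the structure of an isotropic neighbourhood: by Weinstein's isotropic embedding theorem, a neighbourhood of $N$ in $(\R^{2n},\omst)$ is symplectomorphic to a neighbourhood of the zero section in a model bundle over $N$ — concretely $T^*N\oplus E$ with a twisted symplectic form, where $E$ is a symplectic vector bundle of rank $2(n-k)$ associated to the symplectic normal bundle. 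Since $N$ is bounded, its closure $\overline N$ need not be a manifold, so I would instead work with the ambient formulation: the goal is to produce a symplectic embedding of \emph{some} open neighbourhood $\OOO$ of $N$ in $\R^{2n}$ into $(M,\om)$, and shrinking $\OOO$ is always allowed.

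The main step is to shrink to a neighbourhood of $N$ that is, after a symplectomorphism, arbitrarily "thin" in the directions transverse to $N$, hence arbitrarily small in an appropriate sense, and then apply Gromov's isosymplectic embedding theorem to embed it into $(M,\om)$. More precisely: pick any point $p\in M$ and a Darboux chart around $p$, so it suffices to symplectically embed a neighbourhood of $N$ into an arbitrarily small ball $B^{2n}_\eps$ (using Remark \ref{rmk:inj sympl}\reff{rmk:inj sympl:emb}-type reasoning and Darboux's theorem as in Remark \ref{rmk:inj sympl}). Now scale: for $\lambda>0$ the dilation $x\mapsto\lambda x$ of $\R^{2n}$ multiplies $\omst$ by $\lambda^2$, so it is a conformal symplectomorphism, not a symplectomorphism — this does not directly help. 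Instead I would use the isotropy of $N$: in the Weinstein model $T^*N\oplus E$, the fibrewise rescaling $(q,\xi,e)\mapsto(q,\lambda\xi,\sqrt\lambda\,e)$ pulls back the model symplectic form to $\lambda$ times itself on the cotangent part but there is a subtlety with $E$; the clean route is that one can symplectically rescale $T^*N$ by $(q,\xi)\mapsto(q,\lambda\xi)$ composed with nothing on the base since $d(\lambda\xi)\wedge dq=\lambda\,d\xi\wedge dq$ — again conformal. So the honest tool here is Gromov's isosymplectic (a.k.a. $h$-principle for isosymplectic embeddings) theorem, which says that an isotropic submanifold of dimension $<n$, or more generally any submanifold of dimension $\le n$ on which $\om$ restricts suitably, with the correct bundle-theoretic data, admits a symplectic embedding of a neighbourhood into any target symplectic manifold of the same dimension, provided a formal (bundle monomorphism) solution exists.

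Concretely, the key steps in order: (1) Reduce via Darboux to showing that a neighbourhood of $N$ symplectically embeds into $(\R^{2n},\omst)$ \emph{near a prescribed point}, equivalently into an arbitrarily small Darboux ball of $(M,\om)$; by Remark \ref{rmk:inj sympl}, arbitrary squeezability is equivalent to injectively symplectically mapping into every neighbourhood of the origin in $\R^{2n}$, and for a submanifold, an injective symplectic map of a neighbourhood is a symplectic embedding. (2) Using the Weinstein isotropic neighbourhood theorem, replace the ambient neighbourhood of $N$ by a neighbourhood of the zero section in the model bundle $\pi\colon T^*N\oplus E\to N$; here $N$ itself (not $\overline N$) is the manifold, and $N$ embeds as a bounded isotropic submanifold, which is the data we need. (3) Verify the formal/bundle-theoretic hypotheses of Gromov's isosymplectic embedding theorem for $N\hookrightarrow M$: since $N$ is isotropic of dimension $\le n$ and $\om|_M$ is nondegenerate, the tangent data splits off the required symplectic complement, and the existence of a bundle monomorphism $T(T^*N\oplus E)|_N\to TM$ intertwining the symplectic forms reduces to a connectivity statement about $\mathrm{Sp}(2n)$; boundedness of $N$ (hence paracompactness, finite-dimensionality of everything in sight) ensures the $h$-principle applies. (4) Apply Gromov's theorem to obtain a symplectic embedding of a neighbourhood of the zero section, compose back through the Weinstein identification, and land in the chosen small ball.

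The main obstacle I expect is step (3): making sure the hypotheses of the isosymplectic embedding theorem are genuinely satisfied for a (possibly non-compact, merely bounded, possibly high-dimensional-closure) isotropic submanifold, and in particular handling the symplectic normal bundle $E$ correctly so that the formal solution exists and the $h$-principle (which is usually stated for compact or open manifolds) applies. A secondary subtlety is that $N$ bounded does not mean $N$ compact, so one must either invoke a version of Gromov's theorem valid for open manifolds or first embed $N$ into a slightly larger genuinely open isotropic submanifold. I would resolve this by noting that any bounded submanifold without boundary is an \emph{open} manifold in the sense relevant to the $h$-principle (it has no compact component that is relevant, or one treats components separately), and that Gromov's isosymplectic embedding theorem in the form of the $h$-principle for isosymplectic embeddings (see Gromov, \emph{Partial Differential Relations}, or Eliashberg–Mishachev) applies to open manifolds; the formal data is supplied by the stable triviality of the relevant bundles after stabilising with the symplectic normal directions, exactly as in the standard proof that subcritical isotropic submanifolds are flexible.
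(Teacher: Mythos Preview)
Your proposal takes a substantially heavier route than the paper and, in doing so, discards precisely the idea that makes the proof immediate. You write that the dilation $x\mapsto cx$ ``is a conformal symplectomorphism, not a symplectomorphism --- this does not directly help.'' But the point is not to use the dilation as the embedding. The dilation sends the isotropic submanifold $N$ to another \emph{isotropic} submanifold $cN$, and it induces a canonical isomorphism between their symplectic normal bundles $(TN)^{\omst}/TN$ and $(T(cN))^{\omst}/T(cN)$ covering the diffeomorphism $N\to cN$, $x\mapsto cx$. By the isotropic neighbourhood theorem this is exactly the data that determines the symplectic germ, so there exist open neighbourhoods $U\supseteq N$ and $U'\supseteq cN$ and a \emph{genuine} symplectomorphism $U\to U'$ (which restricts to $x\mapsto cx$ on $N$ but is not the dilation on $U$). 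Since $N$ is bounded, $cN$ and hence $U'$ can be taken inside any prescribed ball; Darboux then finishes. This is the paper's entire proof.

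Your $h$-principle approach, besides being overkill, has real gaps. In step~(4) you propose to apply Gromov's isosymplectic embedding theorem to embed ``a neighbourhood of the zero section'' into a small ball; that is an \emph{equidimensional} symplectic embedding problem, to which the $h$-principle does not apply (and where rigidity, e.g.\ non-squeezing, is the whole point). If instead you mean to apply Gromov's theorem to $N$ itself and then extend via the isotropic neighbourhood theorem, two issues remain: for $k=n$ (closed Lagrangian $N$, which is allowed here --- e.g.\ the standard torus) there is no $h$-principle for Lagrangian embeddings, so you cannot produce the required embedding into a small ball this way; and for $k<n$ you must still match the symplectic normal bundle $E$ of the new isotropic embedding with that of the original $N$, which you do not address. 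The scaling argument above sidesteps all of this in one line.
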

For a proof see Section \ref{sec:proof prop:isotr}. The result easily follows from the isotropic neighbourhood theorem and may therefore be well-known.
\begin{Rmk}[arbitrary squeezing for isotropic submanifold] Under the additional hypothesis that the dimension of the submanifold is strictly less than $n$, the statement of Proposition \ref{prop:isotr} follows from Theorem \ref{thm:squeeze}. To see this, we use the facts that every $m$-dimensional manifold is countably $m$-rectifiable and that it is critically negligible, if $m<n$.
\end{Rmk}

The next proposition implies that every bounded \emph{symplectic submanifold} of $\R^{2n}$ without boundary, of codimension at least 2, is arbitrarily squeezable.
\begin{prop}[arbitrary squeezing for certain symplectic submanifolds of $\R^{2n}$]\label{prop:sympl sq} A subset of $\R^{2n}$ ambiently symplectically embeds into every nonempty symplectic manifold of dimension $2n$, if it is one of the following:
\begin{enua}\item\label{prop:sympl sq:bdd}a bounded symplectic submanifold without boundary, whose codimension is at least 2
\item\label{prop:sympl sq:R2m}$\R^{2m}\x\{0\}$ %
\footnote{Our convention for $\omst$ is such that $\R^{2m}\x\{0\}\sub\R^{2n}$ is a symplectic submanifold.} %
with $m<n$
\end{enua}
\end{prop}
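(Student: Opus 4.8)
The plan is to combine Darboux's theorem, the $h$-principle for isosymplectic embeddings of open manifolds (Gromov's isosymplectic embedding theorem), and Weinstein's symplectic neighbourhood theorem. Denote by $N$ the given subset and assume $\dim N>0$, the $0$-dimensional case being elementary. Let $(M,\om)$ be an arbitrary nonempty symplectic manifold of dimension $2n$. By Darboux's theorem $M$ contains an open subset symplectomorphic to a standard ball $B^{2n}_\eps\subset\R^{2n}$ with $\eps>0$, so it suffices to produce a symplectic embedding of some open neighbourhood of $N$ in $\R^{2n}$ into $B^{2n}_\eps$. I would first observe that $N$ is an \emph{open} manifold, i.e.\ has no compact component: for the set in \reff{prop:sympl sq:R2m} this is clear, while if a component $N_0$ of the set in \reff{prop:sympl sq:bdd} were compact, then $\omst|_{N_0}$ would be exact (since $\omst$ is exact on $\R^{2n}$), hence $[\omst|_{N_0}]=0$ in $H^2(N_0;\R)$, contradicting that $N_0$ is a closed symplectic manifold of positive dimension.

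Next I would construct a formal isosymplectic embedding of $N$ into $B^{2n}_\eps$. Equip $N$ with $\sigma:=\omst|_N$. Since $N$ is a symplectic submanifold of $(\R^{2n},\omst)$, the inclusion $T_xN\hookrightarrow\R^{2n}$ is a linear symplectic monomorphism for each $x\in N$; hence the bundle map $F: TN\to TB^{2n}_\eps$ that covers the constant map $N\to\{0\}$ and equals this inclusion on each fibre (under the canonical identification $T_0B^{2n}_\eps=\R^{2n}$) is a symplectic vector bundle monomorphism with $F^*\omst=\sigma$. Because $N$ is open and the codimension $2n-\dim N$ is at least $2$ — by hypothesis in \reff{prop:sympl sq:bdd}, and since $m<n$ in \reff{prop:sympl sq:R2m} — Gromov's isosymplectic embedding theorem yields an isosymplectic embedding $\phi: N\hookrightarrow B^{2n}_\eps$ whose differential $d\phi$ is homotopic to $F$ through symplectic vector bundle monomorphisms $TN\to TB^{2n}_\eps$.

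Finally I would thicken $\phi$ via Weinstein's symplectic neighbourhood theorem. The symplectic normal bundle of $N$ in $\R^{2n}$ (resp.\ of $\phi(N)$ in $B^{2n}_\eps$) is the symplectic orthogonal complement of the image of $F$ (resp.\ of $d\phi$) inside the trivial symplectic bundle $N\times\R^{2n}$ (resp.\ $\phi(N)\times\R^{2n}$); as $F$ and $d\phi$ are homotopic through symplectic monomorphisms, these complements are isomorphic as symplectic vector bundles, compatibly with $\phi$ — automatically so in case \reff{prop:sympl sq:R2m}, where $N$ is contractible. Weinstein's theorem then provides open neighbourhoods $U$ of $N$ in $\R^{2n}$ and $U'$ of $\phi(N)$ in $B^{2n}_\eps$ together with a symplectomorphism $U\to U'$ restricting to $\phi$ on $N$. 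Since $U'\subset B^{2n}_\eps\subset M$, this is an ambient symplectic embedding of $N$ into $M$, which is exactly what is required.

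I expect the main obstacle to be the invocation of the $h$-principle: one must verify its hypotheses (openness of $N$, which is the point of the exactness argument, and codimension at least $2$) and, more delicately, keep enough control on the formal homotopy class of the resulting embedding to match symplectic normal bundles in the last step. A minor subtlety worth noting is that the neighbourhood actually embedded is \emph{not} one of uniform thickness — a polydisk-type neighbourhood of $N$ would face symplectic capacity obstructions once the target ball $B^{2n}_\eps$ is small — but rather the possibly tapering neighbourhood $U$ furnished by Weinstein's theorem, whose image $U'$ lies in $B^{2n}_\eps$ simply because $\phi(N)$ does and $U'$ may be taken arbitrarily small.
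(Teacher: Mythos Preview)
Your proposal is correct and follows essentially the same route as the paper: reduce via Darboux to a small ball, invoke Gromov's isosymplectic embedding theorem for open manifolds (openness coming from exactness of $\omst|_N$, codimension $\geq2$ by hypothesis), and then match symplectic normal bundles through the resulting homotopy of symplectic monomorphisms so that the symplectic neighbourhood theorem applies. The only cosmetic difference is that the paper builds the formal data over an explicit initial embedding $f_0$ (the scaled inclusion $c\cdot i$ in case \reff{prop:sympl sq:bdd}, a chosen embedding with $df_0(0)=\id$ in case \reff{prop:sympl sq:R2m}) and phrases the $h$-principle as an isotopy from $f_0$, whereas you take the base map constant---this streamlines the argument and lets you treat both cases at once.
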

For a proof of this proposition see Section \ref{sec:proof prop:sympl}. The result easily follows from the isosymplectic embedding theorem of Gromov and the symplectic neighbourhood theorem. It is well-known, at least if the subset is as in \reff{prop:sympl sq:R2m}. In this case \footnote{with $m=2$ and $n=3$} a proof is outlined in F.~Schlenk's article \cite[p.~176]{SchlOld}. In our proof of Proposition \ref{prop:sympl sq} we fill in some details that are missing in \cite[p.~176]{SchlOld} and adapt the method of proof to situation \reff{prop:sympl sq:bdd}.
\begin{Rmk}Under the additional hypothesis that the dimension of the submanifold is less than $n$, the statement of Proposition \ref{prop:sympl sq} follows from Theorem \ref{thm:squeeze}.
\end{Rmk}

The next result shows that the $n$-negligibility condition in Theorem \ref{thm:squeeze} is sharp. (Compare to Remark \ref{rmk:negli}.) It is due to J.~Swoboda and F.~Ziltener, see \cite[Theorem 1]{SZsqueeze}.
\begin{thm}[nonsqueezable set of critical dimension]\label{thm:nonsqueezable} For every $n\geq2$ there exists a compact $n$-Hausdorff-dimensional subset of the closed ball in $\R^{2n}$ of radius $\sqrt2$ that does not injectively symplectically map into the open symplectic cylinder of radius 1. 
\end{thm}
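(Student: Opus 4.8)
The plan is to reduce the statement to Gromov's nonsqueezing theorem, applied to neighbourhoods of $A$ rather than to $A$ itself. Since $A$ is compact and the cylinder $Z_1:=B^2_1\x\R^{2n-2}$ has dimension $2n=\dim\R^{2n}$, Lemma~\ref{le:loc inj compact} shows that $A$ injectively symplectically maps into $Z_1$ exactly when some open neighbourhood of $A$ symplectically embeds into $Z_1$. So the goal is to produce a compact set $A\sub\overline{B^{2n}_{\sqrt2}}$ with $\dim_HA=n$ such that no open neighbourhood of $A$ symplectically embeds into $Z_1$.

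A key point to keep in mind is that the obstruction cannot be packaged through a standard symplectic capacity: a Hausdorff $n$-dimensional subset of $\R^{2n}$ has Lebesgue measure zero, hence admits open neighbourhoods of arbitrarily small volume, and therefore of arbitrarily small Gromov width (and small Hofer--Zehnder capacity). So I would argue directly with $J$-holomorphic curves, in the spirit of Gromov's original proof: a hypothetical symplectic embedding $\Phi\colon U\to Z_1$ of a neighbourhood $U\supseteq A$ pushes the geometry of $A$ into $Z_1$, and one sets up a moduli space of holomorphic curves (or of disks with boundary on the pieces of $A$) whose symplectic area is forced to exceed $\pi$ for reasons intrinsic to the configuration $A$, yet cannot exceed $\pi$ once the curves live inside a cylinder of radius $1$. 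The set $A$ is to be designed precisely so as to ``trap'' such curves.

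Constructing $A$ is the core of the argument, and I would follow Swoboda and Ziltener \cite[Theorem~1]{SZsqueeze}. A natural model is a countable, interlaced family of Lagrangian tori in toric position inside $\overline{B^{2n}_{\sqrt2}}$ — for instance $A=\mu^{-1}(S)$, where $\mu(z)=\pi\big(|z_1|^2,\dots,|z_n|^2\big)$ is the moment map of the standard $\T^n$-action and $S$ is a compact, Hausdorff $0$-dimensional subset of the simplex $\{a\in\R^n_{\ge0}\mid\sum_ia_i\le2\pi\}$ accumulating onto its top face. Then $A$ is compact, lies in $\overline{B^{2n}_{\sqrt2}}$, and has Hausdorff dimension exactly $n$, being a countable union of Lagrangian $n$-tori together with a limit set. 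The delicate point is that a single Lagrangian torus — indeed any compact submanifold — has a standard symplectic neighbourhood, and so does embed into $Z_1$; the obstruction must therefore be forced by the genuinely singular accumulation locus of $A$, which means $S$ has to be chosen so that \emph{every} neighbourhood of that locus still supports the holomorphic-curve argument of the previous paragraph.

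I expect the main obstacle to be exactly this: engineering the accumulation pattern of $A$ and proving the attendant energy estimate — i.e.\ the nonsqueezing statement for the singular set $A$ itself — while simultaneously keeping $\dim_HA$ equal to $n$ and $A$ inside the ball of radius $\sqrt2$. Granting the construction, it combines with the reduction of the first paragraph to prove the theorem; and since $\dim_HA=n$ it also yields $\Dsq\ge n$, as asserted in the text. The radii $\sqrt2$ and $1$ are calibrated to the nonsqueezing ratio $(\sqrt2)^2/1^2=2$.
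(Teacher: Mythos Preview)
The paper does not prove this theorem; it cites \cite[Theorem~1]{SZsqueeze} and summarizes the construction in one sentence: the set $A$ may be taken to be \emph{the union of a closed Lagrangian submanifold and the image of a smooth map from $S^2$ to $\R^{2n}$}. Your reduction via Lemma~\ref{le:loc inj compact} is correct and matches how the paper uses that lemma (Remark~\ref{rmk:inj sympl}\reff{rmk:inj sympl:emb}), and your observation that no monotone normalized capacity can obstruct neighbourhoods of a Lebesgue-null set is a genuine and correct insight into why the problem is delicate. But the construction you then sketch is not the one in \cite{SZsqueeze}, despite your saying you would follow that paper.

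Your proposed $A$---a countable family of Lagrangian tori $\mu^{-1}(S)$ accumulating onto a face of the moment simplex---has a real gap, which you yourself flag as ``the main obstacle'': there is no evident reason the accumulation pattern alone forces a holomorphic-curve lower bound, since each individual torus has a standard neighbourhood that embeds into $Z_1$, and a countable union of such neighbourhoods can be chosen disjoint and of arbitrarily small total volume. So the proposal is a hope, not a proof. The actual construction avoids accumulation entirely: it has only two pieces, a closed Lagrangian (contributing the Hausdorff dimension $n$) and a single $S^2$-image (dimension $2\le n$), arranged so that any symplectic embedding of a neighbourhood into $Z_1$ contradicts a rigidity statement for the Lagrangian---in the spirit of the Gromov--Sikorav theorem quoted right after Theorem~\ref{thm:nonsqueezable} and of the Extension after Restriction Principle the paper mentions. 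The $S^2$-piece is what makes this rigidity applicable to neighbourhoods of $A$ rather than to all of $\R^{2n}$; your accumulating tori do not supply an analogous mechanism.
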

Such a set may be constructed as the union of a closed Lagrangian submanifold and the image of a smooth map from $S^2$ to $\R^{2n}$, see the proof of \cite[Theorem 1]{SZsqueeze}. Theorem \ref{thm:nonsqueezable} sharpens the following theorem, which follows from M.~Gromov's and J.-C.~Sikorav's result \cite[Th\'eor\`eme 1]{Sik}%
\footnote{This result states that there does not exist any symplectomorphism of $\R^{2n}$ that maps the standard Lagrangian torus into the open standard symplectic cylinder.} 
or from D.~Hermann's result \cite[Theorem 1.12]{Her}, using the Extension after Restriction Principle. (See \cite[p.~8]{Schl}.) We denote by $\T^n=(S^1)^n$ the standard Lagrangian torus in $\R^{2n}$.
\begin{thm*}[Symplectic Hedgehog Theorem] For every $n\geq2$ the subset
\[[0,1]\cdot\T^n:=\big\{cx\,\big|\,c\in[0,1],\,x\in\T^n\big\}\]
of $\R^{2n}$ does not injectively symplectically map into the open symplectic cylinder of radius 1.
\end{thm*}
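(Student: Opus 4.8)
The plan is to reduce the statement to M.~Gromov's and J.-C.~Sikorav's theorem \cite[Th\'eor\`eme 1]{Sik} via the Extension after Restriction Principle, as indicated just above. Assume, for a contradiction, that $S:=[0,1]\cdot\T^n$ injectively symplectically maps into the open symplectic cylinder $Z$ of radius $1$. The set $S$ is compact (it is the image of the compact set $[0,1]\x\T^n$ under the continuous map $(c,x)\mapsto cx$) and star-shaped with respect to the origin: if $y=cx$ with $c\in[0,1]$ and $x\in\T^n$, and $t\in[0,1]$, then $ty=(tc)x\in S$. Since $\dim Z=\dim\R^{2n}$ and $S$ is compact, Remark \ref{rmk:inj sympl}\reff{rmk:inj sympl:emb} (or Lemma \ref{le:loc inj compact}) shows that $S$ even symplectically embeds into $Z$; that is, by Definition \ref{defi:sympl}\reff{defi:sympl:emb} there are an open neighbourhood $\OOO$ of $S$ in $\R^{2n}$ and a symplectic embedding $\Psi\colon\OOO\to Z$.

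Next I would wedge a bounded star-shaped domain between $S$ and $\OOO$. For $\eps>0$ set $S_\eps:=\big\{y\in\R^{2n}\,\big|\,\dE(y,S)<\eps\big\}$; this is a bounded, connected, open set, and it is again star-shaped with respect to $0$, since $\dE(y,s)<\eps$ with $s\in S$ and $t\in[0,1]$ give $ts\in S$ and $\dE(ty,ts)=t\,\dE(y,s)<\eps$. Because $S$ is compact and $\OOO\supseteq S$ is open, we may fix $\eps>0$ with $\BAR{S_\eps}\sub\OOO$, and then $D:=S_\eps$ is a bounded star-shaped domain with $S\sub D$ and $\BAR D\sub\OOO$; in particular $\Psi$ restricts to a symplectic embedding of a neighbourhood of $\BAR D$ into $Z$. (If the version of the principle used below requires $\partial D$ to be smooth, one first smooths $S_\eps$ from inside, e.g.~by smoothing its radial function, keeping it star-shaped and keeping both inclusions; this does not affect the argument.)

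Finally I would invoke the Extension after Restriction Principle, \cite[p.~8]{Schl}: for every $\lam\in(0,1)$ there is a symplectomorphism $\Phi_\lam$ of $\R^{2n}$ with $\Phi_\lam=\Psi$ on $\lam D$. As $D$ is open and star-shaped with respect to $0$, the sets $\lam D$, $\lam\in(0,1)$, form an increasing open cover of $D$; since $\T^n\sub S\sub D$ is compact, there is $\lam\in(0,1)$ with $\T^n\sub\lam D$. For this $\lam$ we obtain $\Phi_\lam(\T^n)=\Psi(\T^n)\sub\Psi(\OOO)\sub Z$, i.e.~$\Phi_\lam$ is a symplectomorphism of $\R^{2n}$ carrying the standard Lagrangian torus $\T^n$ into the open standard symplectic cylinder of radius $1$. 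This contradicts \cite[Th\'eor\`eme 1]{Sik}. (The same contradiction can be reached from D.~Hermann's result \cite[Theorem 1.12]{Her}.) Hence no injective symplectic map from $S$ into $Z$ exists, which is the assertion.

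The step I expect to be the only real work is correctly meeting the hypotheses of the Extension after Restriction Principle: producing an honestly star-shaped (and, if needed, smooth-boundaried) bounded domain $D$ squeezed between the thin compact set $S$ and its neighbourhood $\OOO$, and checking that the Lagrangian torus still lies inside the contracted domain $\lam D$ to which the principle applies. This is precisely why the theorem is phrased with the ``hedgehog'' $[0,1]\cdot\T^n$ rather than with $\T^n$ itself: a neighbourhood of $\T^n$ alone is not star-shaped, whereas adjoining all radial segments to the origin makes neighbourhoods star-shaped. Everything else is bookkeeping with the definitions of ``(ambiently) symplectic map'' and ``injectively symplectically maps into''.
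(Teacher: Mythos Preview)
Your proposal is correct and follows precisely the route the paper indicates: pass from an injective symplectic map on the compact set $S=[0,1]\cdot\T^n$ to an ambient symplectic embedding via Remark \ref{rmk:inj sympl}\reff{rmk:inj sympl:emb}, sandwich $S$ in a bounded star-shaped domain, apply the Extension after Restriction Principle \cite[p.~8]{Schl}, and contradict \cite[Th\'eor\`eme 1]{Sik}. The paper gives no further detail beyond naming these ingredients, so your write-up is a faithful expansion of its sketch.
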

\begin{Rmk}This subset has Hausdorff dimension $n+1$.
\end{Rmk}
A result by J.~Swoboda and F.~Ziltener states that no product of odd-dimensional unit spheres of dimension at least 3 injectively symplectically maps into the standard symplectic cylinder, see \cite[Corollary 5]{SZCoiso}. This result provides further examples of small subsets that are not arbitrarily squeezable.
\subsection*{Further results related to Question \ref{q:small}}
In \cite[Proposition 3, Theorems 5 and 6]{SZHofer} J.~Swoboda and F.~Ziltener consider Question \ref{q:small} from a dynamical point of view, providing upper and lower bounds on the relative Hofer diameter of certain small subsets of a symplectic manifold.

\section{Proofs of the main results}\label{sec:proof main}
\subsection*{Proof of Theorem \ref{thm:displ}}
The proof of this result is based on the following lemma, whose proof captures the measure theoretic part of the argument. For every $A\sub\R^\ell$ and $v\in\R^\ell$ we denote
\[A+v:=\big\{x+v\,\big|\,x\in A\big\}.\]
\begin{lemma}[instantaneous affine displaceability of small sets]\label{le:displ} Let $\ell\in\N_0$, $m\in\{0,\ldots,\ell\}$, $A\sub\R^\ell$ be a countably $m$-rectifiable subset and $B\sub\R^\ell$ an $(\ell-m)$-negligible subset. There exists a vector $v_0\in\R^\ell$, such that for almost every $t\in\R$, we have
\begin{equation}\label{eq:A+tv0}(A+tv_0)\cap B=\emptyset.
\end{equation}
\end{lemma}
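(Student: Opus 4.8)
The plan is to show that the set of ``bad'' directions — vectors $v$ such that $(A+tv)\cap B\neq\emptyset$ for a positively-measured set of $t\in\R$ — is itself Lebesgue-negligible in $\R^\ell$, hence we may pick $v_0$ outside it. The geometric heart of the matter is the displacement-vector map $\vv(x,y)=y-x$ from the introduction: a single translate $A+v$ misses $B$ precisely when $v\notin \vv(B\times A)=B-A$. So the first step is to control the size of $B-A$. Since $A$ is countably $m$-rectifiable, by Definition \ref{defi:count m rect} it is a countable union of Lipschitz images of subsets of $\R^m$; since $B$ is $(\ell-m)$-negligible, its $(\ell-m)$-dimensional Hausdorff measure is zero by the Remark after Definition \ref{def:neg Hd dim}. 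The product $B\times A$ then carries vanishing $\ell$-dimensional Hausdorff measure: this is the ``result of geometric measure theory about the Hausdorff measure of the product of metric spaces'' cited in the introduction — more precisely, $\HH^{\ell-m}(B)=0$ and $A$ being countably $m$-rectifiable (hence $\sigma$-finite for $\HH^m$ and locally a Lipschitz image of $\R^m$) gives $\HH^{\ell}(B\times A)=0$ by the standard inequality $\HH^{a+b}(X\times Y)\lesssim \HH^a(X)\,\HH^b(Y)$ together with a countable decomposition. Because $\vv$ is Lipschitz (indeed $1$-Lipschitz up to a constant) on $\R^\ell\times\R^\ell$, Lipschitz maps do not increase Hausdorff measure, so $\HH^\ell(B-A)=\HH^\ell(\vv(B\times A))=0$, i.e.\ $B-A$ has Lebesgue measure zero in $\R^\ell$.

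Next I would promote this from single translates to the one-parameter family $t\mapsto A+tv$. Fix any $v\in\R^\ell$ with $v\neq 0$ (the case $v=0$ is separate but we will not need it). Consider the set
\[
E_v:=\big\{(t,b)\in\R\times B\,\big|\,b\in A+tv\big\}
=\big\{(t,b)\,\big|\,\tfrac{1}{\|v\|^2}\langle b-a,v\rangle \text{ realizes } b-tv\in A\text{ for some }a\big\},
\]
or more cleanly: for each $b\in B$, the slice $\{t: b\in A+tv\}$ is contained in $\{t: b-tv\in A\}$. The point is a Fubini/coarea argument: if for a positive-measure set of $t$ we had $(A+tv)\cap B\neq\emptyset$, then the set $\{(a,b,t): a+tv=b,\ a\in A,\ b\in B\}$ would project onto a positive-measure set of $t$'s, and pushing forward via $(a,b,t)\mapsto b-a = tv$ would force a positively-measured (in the line $\R v$) subset of $\R v$ to lie inside $B-A$. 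I would make this rigorous by slicing: the map $(x,t)\mapsto x+tv$ from $\R^\ell\times\R$ is a submersion with $\ell$-dimensional fibers transverse to the extra $t$-direction, so it sends the null set $A\times\R$ (null in $\R^{\ell+1}$? — careful, $A$ may have positive $\HH^m$) — here instead I should slice $B-A$ directly. Concretely: $(A+tv)\cap B\neq\emptyset \iff tv\in B-A$, so the bad set of $t$ for a given $v$ is exactly $\{t\in\R: tv\in B-A\}$, the preimage of the null set $B-A$ under the line $t\mapsto tv$. So I want to choose $v_0$ so that this line meets $B-A$ in a Lebesgue-null subset of $\R$.

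Now the final selection step is a Fubini argument on $\R^\ell$ in ``polar-type'' coordinates $v=t\,u$ with $u$ ranging over the unit sphere (or just over $\R^\ell$ directly): since $N:=B-A$ has $\Leb^\ell(N)=0$, Fubini in the coordinates $\R\times S^{\ell-1}\to\R^\ell$, $(t,u)\mapsto tu$ (which is smooth and pushes forward the product measure to a measure absolutely continuous w.r.t.\ Lebesgue on $\R^\ell\setminus\{0\}$) shows that for $\HH^{\ell-1}$-almost every direction $u\in S^{\ell-1}$, the set $\{t\in\R: tu\in N\}$ has $\Leb^1$-measure zero. Pick any such $u$ and set $v_0:=u$. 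Then for almost every $t\in\R$ we get $tv_0\notin N=B-A$, which is exactly \eqref{eq:A+tv0}. The main obstacle, I expect, is the clean justification of the Hausdorff-measure-of-a-product inequality in the precise generality needed (one factor merely $\sigma$-finite and rectifiable, the other merely of vanishing measure, no compactness) and making sure the Fubini step in polar coordinates is applied to a genuinely Lebesgue-measurable set $N$ — measurability of $B-A$ is not automatic since $B$ is an arbitrary negligible set, but it is contained in a Borel (even $G_\delta$) null set obtained by covering $B$, and replacing $N$ by that Borel null superset is harmless and fixes the measurability issue.
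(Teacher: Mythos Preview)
Your plan matches the paper's proof essentially step for step: show $B-A=\vv(A\times B)$ is Lebesgue-null via the Hausdorff-measure-of-a-product argument (this is the paper's Lemma~\ref{le:prod rect neg}, resting on Federer 2.10.45) together with Lipschitz continuity of $\vv$, then run Fubini in polar coordinates on $\R^\ell$ to find a direction $v_0$ whose line meets $B-A$ in a $\Leb^1$-null set. One caution on the step you correctly flag as the main obstacle: the inequality $\HH^{a+b}(X\times Y)\lesssim\HH^a(X)\,\HH^b(Y)$ is \emph{not} ``standard'' and in fact fails in general (it is the reverse inequality that holds universally); what is actually needed --- and what your parenthetical about Lipschitz images of $\R^m$ plus a countable decomposition correctly anticipates --- is the special case $\HH^{m+m'}(S\times B)=0$ for $S\sub\R^m$ and $\HH^{m'}(B)=0$, which is exactly the paper's Lemma~\ref{le:prod Rm X}.
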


\begin{proof}[Proof of Theorem \ref{thm:displ}] We define $\ell:=2n$ and choose a vector $v_0\in\R^{2n}$ as in Lemma \ref{le:displ}. We denote by $\omst$ the standard symplectic form on $\R^{2n}$. We define the function
\[H:\R^{2n}\to\R,\qquad H(x):=\omst(v_0,x).\]
This function is linear. The corresponding Hamiltonian vector field is given by $X_H\const v_0$ and its flow by
\[\phi^t_H(x)=x+tv_0,\qquad\forall t\in\R,\,x\in\R^{2n}.\]
Therefore, the conclusion of Theorem \ref{thm:displ} follows from \eqref{eq:A+tv0}.\end{proof}

The proof of Lemma \ref{le:displ} is based on the following lemma from geometric measure theory. Let $(X,d),(X',d')$ be metric spaces and $p\in[1,\infty)$. 
\begin{defi*}[$p$-product metric]We define the $p$-product metric of $d$ and $d'$ to be the metric $\wt d_p$ on $\wt X:=X\x X'$ given by
\[\wt d_p(\wt x,\wt y):=\sqrt[p]{d(x,y)^p+d'(x',y')^p},\qquad\forall\wt x=(x,x'),\wt y=(y,y')\in\wt X.\]
\end{defi*}

\begin{lemma}[negligibility and product]\label{le:prod rect neg} Let $m,m'\in\N_0$, $A$ be a countably $m$-rectifiable metric space, $B$ an $m'$-negligible metric space, and $p\in[1,\infty)$. Then the Cartesian product $A\x B$ is $(m+m')$-negligible w.r.t.~the $p$-product metric.
\end{lemma}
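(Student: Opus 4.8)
\textbf{Proof plan for Lemma \ref{le:prod rect neg}.} The plan is to reduce the statement to the case where $A$ is itself (a subset of) a bounded piece of $\R^m$ transported by a single Lipschitz map, and then estimate the Hausdorff content of the product directly via coverings. First I would use the characterization of countable $m$-rectifiability from Lemma \ref{le:count rect}: by \reff{le:count rect:bdd} there is a countable set $\F$ of Lipschitz maps $f\colon D_f\to A$ with $D_f\sub\R^m$ bounded and $A=\bigcup_{f\in\F}\im(f)$. Since a countable union of $(m+m')$-negligible sets is $(m+m')$-negligible (the defining condition in Definition \ref{def:neg Hd dim} behaves well under countable unions, splitting $\eps$ into a summable series), it suffices to show that $\im(f)\x B$ is $(m+m')$-negligible for each fixed $f$. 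Because $f$ is Lipschitz on the bounded set $D_f$, the product map $f\x\id_B\colon D_f\x B\to \im(f)\x B$ is Lipschitz with respect to the $p$-product metrics (the Lipschitz constant depends only on that of $f$ and is independent of the $B$-factor), and Lipschitz images do not increase negligibility; so it is enough to prove that $D_f\x B$ is $(m+m')$-negligible, i.e.\ I have reduced to the case $A=D\sub\R^m$ bounded.

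Next I would handle this reduced case by a direct covering argument. Fix $\eps>0$. Since $B$ is $m'$-negligible, choose a countable cover $\SSS'$ of $B$ with $\sum_{A'\in\SSS'}(\diam A')^{m'}<\eps$; I may assume each $A'$ has diameter at most some fixed $\delta>0$ (refining a cover only helps, and one can always discard or subdivide). For each $A'\in\SSS'$, cover the bounded set $D$ by cubes (or balls) of side roughly $\diam(A')$: one needs at most $C_m\big(1+\diam(D)/\diam(A')\big)^m$ such pieces, each of diameter comparable to $\diam(A')$. Taking products of these $D$-pieces with $A'$ gives a cover of $D\x B$ by sets whose $\wt d_p$-diameter is comparable (up to a dimensional constant depending on $p$) to $\diam(A')$. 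The total sum of $(m+m')$-th powers of diameters is then bounded, up to a constant $C=C(m,p)$, by
\[
\sum_{A'\in\SSS'} C_m\Big(1+\frac{\diam D}{\diam A'}\Big)^m (\diam A')^{m+m'}\leq C_m'\,(1+\diam D)^m\sum_{A'\in\SSS'}(\diam A')^{m'}<C_m'\,(1+\diam D)^m\,\eps,
\]
using $\diam A'\le\delta$ to absorb the $(1+\diam D/\diam A')^m(\diam A')^m$ factor into a constant. Since $\eps$ was arbitrary, $D\x B$ is $(m+m')$-negligible.

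The main technical point — and the step I would be most careful about — is the covering-number estimate for a bounded subset of $\R^m$ at scale $\diam(A')$ and its interaction with the $B$-side diameters: one must make sure the constant multiplying $\eps$ in the final estimate is genuinely independent of the cover $\SSS'$ and of the individual scales $\diam(A')$. This is why I insist on an a priori upper bound $\delta$ on the diameters of the members of $\SSS'$, so that the ``$+1$'' in $(1+\diam D/\diam A')^m$ does not cause trouble for small pieces (for those, $\diam D/\diam A'$ dominates and the factor $(\diam A')^m$ cancels it), while for pieces of size $\gtrsim\diam D$ the whole of $D$ is covered by a single piece. Everything else — the passage through Lipschitz maps, the countable-union stability, and the comparison of $\wt d_p$ with the max metric up to a factor $2^{1/p}$ — is routine. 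I would also note explicitly the trivial boundary cases $m=0$ or $m'=0$, and the case $B=\emptyset$ or $A=\emptyset$, where the statement is immediate.
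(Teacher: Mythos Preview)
Your reduction via Lipschitz maps and $\sigma$-subadditivity is exactly the paper's strategy. The difference lies only in the base case $S\times B$ with $S\sub\R^m$: the paper simply invokes Federer's product inequality (stated as Lemma~\ref{le:prod Rm X}, citing \cite[2.10.45]{Fed69}) to get that $\R^m\times B$ is $(m+m')$-negligible, whereas you give a direct elementary covering argument, exploiting the boundedness of $D_f$ to count cubes at scale $\diam A'$. Your route is self-contained and avoids the external reference; the paper's is a one-liner but black-boxes the measure-theoretic work. One small correction: the phrase ``one can always subdivide'' is not literally available in an arbitrary metric space $B$, but you do not need it---for $m'>0$ the bound $\diam A'\le\delta$ is automatic once $\sum_{A'}(\diam A')^{m'}<\delta^{m'}$, and the case $m'=0$ forces $B=\emptyset$, which you already flag as trivial. (Likewise, cover elements with $\diam A'=0$ can be harmlessly enlarged to positive diameter at total cost $<\eps$.)
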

The proof of this lemma is based on the following product property of the Hausdorff measure. 

\begin{lemma}[negligibility and product with $\R^m$]\label{le:prod Rm X} Let $m,m'\in\N_0$, $(X,d)$ a metric space that is $m'$-negligible, and $p\in[1,\infty)$. Then $\R^m\x X$ is $(m+m')$-negligible w.r.t.~the $p$-product metric.
\end{lemma}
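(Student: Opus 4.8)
The plan is to reduce to countably many bounded pieces and then control the Hausdorff content of a product of a small box in $\R^m$ with a small-diameter subset of $X$. First I would write $\R^m = \bigcup_{k\in\N}C_k$ as a countable union of half-open unit cubes (a countable disjoint cover), so that $\R^m\x X = \bigcup_k (C_k\x X)$. Since a countable union of $(m+m')$-negligible sets is $(m+m')$-negligible (subadditivity of Hausdorff content, or just concatenating the covers with a $\sum 2^{-k}\eps$ trick), it suffices to prove that each $C_k\x X$ is $(m+m')$-negligible; and each $C_k$ is isometric, up to the bi-Lipschitz equivalence of the $p$-metrics, to the unit cube $[0,1)^m$. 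So \Wlog we work with $[0,1]^m\x X$.

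Next, fix $\eps\in(0,\infty)$. Because $X$ is $m'$-negligible, choose a countable cover $\SSS$ of $X$ with $\sum_{B\in\SSS}(\diam B)^{m'}<\eps'$, where $\eps'$ will be chosen small at the end. For each $B\in\SSS$ with $\diam B=:\de_B$, subdivide the cube $[0,1]^m$ into roughly $\lceil 1/\de_B\rceil^m$ subcubes of side length $\le\de_B$; then $[0,1]^m\x B$ is covered by these $\lceil 1/\de_B\rceil^m$ products (subcube)$\,\x B$, each of which has $\wt d_p$-diameter at most $C\de_B$ for a constant $C=C(m,p)$ (namely $C=(m+1)^{1/p}$ or so, bounding the product-metric diameter of a set of the form (diameter-$\de_B$ cube)$\,\x$(diameter-$\de_B$ set)). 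Hence the contribution of $B$ to $\sum(\diam\cdot)^{m+m'}$ is at most
\[
\Big\lceil \tfrac1{\de_B}\Big\rceil^m (C\de_B)^{m+m'}\le \big(\tfrac2{\de_B}\big)^m (C\de_B)^{m+m'}= (2C)^{m}C^{m'}\,\de_B^{m'},
\]
using $\lceil 1/\de_B\rceil\le 2/\de_B$ when $\de_B\le 1$ (and if some $\de_B>1$ one may first split $B$ or simply note $\lceil1/\de_B\rceil=1$, giving a bound $C^{m+m'}\de_B^{m+m'}\le C^{m+m'}\de_B^{m'}\cdot\de_B^{m}$, which is still controlled after an additional harmless truncation — I would handle the $\de_B>1$ case by covering $X$ by sets of diameter $\le1$ first, which costs nothing since $m'$-negligibility with the extra constraint $\diam\le1$ is equivalent to the unconstrained version). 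Summing over $B\in\SSS$ gives a total $\le (2C)^m C^{m'}\eps'$, so choosing $\eps'$ small makes this $<\eps$, proving $[0,1]^m\x X$ is $(m+m')$-negligible.

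The main obstacle, and the only genuinely non-bookkeeping point, is the uniformity in $B$: the number of subcubes needed grows like $\de_B^{-m}$ as $\de_B\to0$, and one must check that this is exactly compensated by the gain from raising the small diameters to the power $m+m'$ rather than $m'$. The computation above shows it balances out to leave precisely $\de_B^{m'}$, which is summable by hypothesis; that is the crux. The rest — comparing the various $p$-product metrics (they are all bi-Lipschitz equivalent with constants depending only on $m+m'$, which only affects constants and not negligibility), reducing $\R^m$ to unit cubes, and the $\sum 2^{-k}$ assembly of countably many covers — is routine and I would dispatch it briefly.
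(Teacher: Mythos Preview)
Your argument is correct. The crux is indeed the balance you identify: subdividing the unit cube into $\lceil 1/\de_B\rceil^m$ subcubes of side $\le\de_B$ costs a factor $\de_B^{-m}$ in the number of covering sets, and this is exactly cancelled by the extra $m$ in the exponent $m+m'$, leaving a sum controlled by $\sum_{B}\de_B^{m'}$. Your handling of the large-diameter case (reduce first to covers by sets of diameter $\le1$, which for $m'>0$ is automatic once $\eps<1$, and for $m'=0$ the hypothesis forces $X=\emptyset$) and of the reduction of $\R^m$ to countably many unit cubes is also fine.

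The paper takes a different route: it does not argue directly but simply invokes \cite[2.10.45.~Theorem, p.~202]{Fed69}, together with the observation that the $p$-product metrics for different $p$ are bi-Lipschitz equivalent (so negligibility is independent of~$p$). Federer's theorem gives a general product inequality for Hausdorff measures from which the present statement is immediate. Your approach is more elementary and entirely self-contained, at the cost of a short computation; the paper's approach is a one-line citation, at the cost of sending the reader into Federer's treatise. Both are perfectly adequate for this lemma.
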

\begin{proof}[Proof of Lemma \ref{le:prod Rm X}] This follows from \cite[2.10.45.~Theorem, p.~202]{Fed69}, using that the $p$-product metrics for different $p$s are equivalent.
\end{proof}

In the proof of Lemma \ref{le:prod rect neg} we will also use the following remark.
\begin{rmk}[negligibility and Lipschitz continuity]\label{rmk:im Lip} Let $X$ and $X'$ be metric spaces, $f:X\to X'$ a Lipschitz continuous map, and $s\in [0, \infty)$, such that $X$ is $s$-negligible. Then the image $f(X)$ is $s$-negligible.
\end{rmk}
For every metric space $(X,d)$ and $s\in[0,\infty)$ we denote by $\HH^s=\HH_d^s$ the $s$-dimensional outer Hausdorff measure on $(X,d)$.
\begin{proof}[Proof of Lemma \ref{le:prod rect neg}] By our hypothesis that $A$ is countably $m$-rectifiable, there exists a collection $\F$ as in Definition \ref{defi:count m rect} with $X=A$. Let $f\in\F$. We denote by $S$ the domain of $f$. By Lemma \ref{le:prod Rm X}, using $S\sub\R^m$ and our hypothesis that $B$ is $m'$-negligible, $S\x B$ is $(m+m')$-negligible w.r.t.~the $p$-product metric. Since $f$ is Lipschitz, the map $f\x\id$ is Lipschitz w.r.t.~the $p$-product metric on either side. Using Remark \ref{rmk:im Lip}, it follows that the metric space
\[\im(f)\x B=\im(f\x\id)\]
is $(m+m')$-negligible w.r.t.~the $p$-product metric. We have
\begin{align*}
\HH^{m+m'}(A\x B)&\leq\HH^{m+m'}\left(\left(\bigcup_{f\in\F}\im(f)\right)\x B\right)\tag{by \eqref{eqn_covering_A}}\\
&\leq\sum_{f\in\F}\HH^{m+m'}(\im(f)\x B)\tag{using $\left(\bigcup_{f\in\F}\im(f)\right)\x B=\bigcup_{f\in\F}\big(\im(f)\x B\big)$ and $\si$-subadditivity of $\HH^{m+m'}$}\\
&=\sum_{f\in\F}0\tag{using that $\im(f)\x B$ is $(m+m')$-negligible}\\
&=0.
\end{align*}
This proves Lemma \ref{le:prod rect neg}.
\end{proof}
In the proof of Lemma \ref{le:displ}, we will also use the following remark.
\begin{rmk}[Hausdorff measure]\label{rmk:Hausdorff Lebesgue} The $n$-dimensional outer Hausdorff measure in $\R^n$ is proportional to the outer Lebesgue measure, see for example \cite[Corollary 5.22, p.50]{AE09}.
\end{rmk}
	
\begin{proof}[Proof of Lemma \ref{le:displ}] We denote by $\lam^*$ the $\ell$-dimensional outer Lebesgue measure. By Lemma \ref{le:prod rect neg} the product $A\x B$ is $\ell$-negligible. We define the displacement vector map to be the function
\begin{equation}\label{eq:v}\vv:\R^\ell\times\R^\ell\to\R^\ell,\qquad\vv(x,y):=y-x.\end{equation}
This function is Lipschitz continuous. Therefore, by Remark \ref{rmk:im Lip}, the image $\vv(A\x B)$ is $\ell$-negligible. Hence by Remark \ref{rmk:Hausdorff Lebesgue}, we have
\begin{equation}\label{eq:LL ell}\lam^*\big(\vv(A\x B)\big)=0.\end{equation}
We define the map
\[\psi:(0,\infty)\x S^{\ell-1}\to\R^\ell,\qquad\psi(r,v):=rv.\]
Denoting by $\int_{S^{\ell-1}}dv$ the integral over $S^{\ell-1}$ w.r.t.~standard Riemannian metric, we have
\begin{align*}\int_{S^{\ell-1}}\left(\int_0^{\infty} \chi_{\vv(A\times B)}\circ\psi(r, v)r^{\ell-1}dr\right)dv&=\int_{\R^\ell\wo\{0\}}\chi_{\vv(A\x B)}(x)\,dx\tag{by Fubini's theorem and the Change of Variables theorem}\\
&=\lam^*\big(\vv(A\x B)\big)\\
&=0\tag{by \eqref{eq:LL ell}}
\end{align*}
It follows that for almost every $v\in S^{\ell-1}$, we have
\begin{equation}\label{eq:int 0 infty}
\int_0^{\infty}\chi_{\vv(A\times B)}(rv)r^{\ell-1}dr=0.
\end{equation}
Since the antipodal map on the sphere is volume-preserving, there exists $v_0\in S^{\ell-1}$, such that \eqref{eq:int 0 infty} holds for $v=v_0$ and $v=-v_0$. We choose such a $v_0$. Using equation \eqref{eq:int 0 infty} with $v=v_0$, we have that $tv_0\notin \vv(A\times B)$ for almost every $t=r\in(0,\infty)$. Using equation \eqref{eq:int 0 infty} with $v=-v_0$, we have that $-tv_0\notin \vv(A\times B)$ for almost every $t\in(0,\infty)$. Hence, the set
\begin{equation}\label{eq:S:=}
S:=\left\{t\in\R\,\big|\,tv_0\notin\vv(A\times B)\right\}
\end{equation}
has full (1-dimensional) Lebesgue measure. The conclusion of Lemma \ref{le:displ} is therefore a consequence of the following claim.
\begin{claim}\label{claim_displacement_detail} For every $t\in S$ we have
\[(A+tv_0)\cap B=\emptyset.\]
\end{claim}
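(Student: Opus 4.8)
The plan is to unwind the definitions of $S$ and of the displacement vector map $\vv$ directly, proving the contrapositive. Suppose for contradiction that $t\in S$ but $(A+tv_0)\cap B\neq\emptyset$. Then there is a point $z\in(A+tv_0)\cap B$, so $z=x+tv_0$ for some $x\in A$ and also $z=y$ for some $y\in B$. Hence $y-x=tv_0$, i.e. $\vv(x,y)=tv_0$ with $(x,y)\in A\times B$, so $tv_0\in\vv(A\times B)$. By the definition \eqref{eq:S:=} of $S$, this contradicts $t\in S$.

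More precisely, I would phrase it positively: fix $t\in S$, so by \eqref{eq:S:=} we have $tv_0\notin\vv(A\times B)$. To show $(A+tv_0)\cap B=\emptyset$, take any $z\in A+tv_0$; then $z=x+tv_0$ for some $x\in A$. If $z$ were also in $B$, then $(x,z)\in A\times B$ and $\vv(x,z)=z-x=tv_0$, forcing $tv_0\in\vv(A\times B)$, a contradiction. Therefore $z\notin B$, and since $z$ was arbitrary, $(A+tv_0)\cap B=\emptyset$. This establishes Claim \ref{claim_displacement_detail}, and since $S$ has full one-dimensional Lebesgue measure, \eqref{eq:A+tv0} holds for almost every $t\in\R$, completing the proof of Lemma \ref{le:displ}.

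There is essentially no obstacle here: the claim is a pure bookkeeping exercise that translates the membership condition $tv_0\notin\vv(A\times B)$ into disjointness of the translate $A+tv_0$ from $B$. The only thing to be careful about is the direction of the translation versus the sign in $\vv(x,y)=y-x$ — one must check that $z=x+tv_0\in B$ really does yield $\vv(x,z)=tv_0$ (and not $-tv_0$), which it does since $\vv(x,z)=z-x=tv_0$. No measure theory, topology, or symplectic input is needed for this last step; all the substance was already carried out in establishing that $S$ has full measure via \eqref{eq:LL ell}, Fubini, and the change of variables computation.
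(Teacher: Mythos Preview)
Your proof is correct and essentially identical to the paper's: both argue the contrapositive by taking a point in $(A+tv_0)\cap B$, writing it as $a+tv_0=b$ with $a\in A$, $b\in B$, and concluding $tv_0=\vv(a,b)\in\vv(A\times B)$, hence $t\notin S$. Your added remark about checking the sign convention in $\vv(x,y)=y-x$ is a fair point of care, but otherwise there is nothing to distinguish the two arguments.
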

\begin{proof}[Proof of Claim \ref{claim_displacement_detail}] Let $t\in\R$ be such that $(A+tv_0)\cap B\neq\emptyset$. Then there exist $a\in A$, $b\in B$ such that
\begin{align*}
&a+tv_0=b,\\
\textrm{i.e.,}\qquad&tv_0=b-a=\vv(a, b)\tag{by \eqref{eq:v}}
\end{align*}
Hence $tv_0\in\vv(A\times B)$. Therefore, by \eqref{eq:S:=}, $t\notin S$. The statement of Claim \ref{claim_displacement_detail} follows.
\end{proof}
This proves Lemma \ref{le:displ}.
\end{proof}
\subsection*{Proof of Theorem \ref{thm:squeeze}}
This proof is based on the following lemma. For every Lebesgue measurable subset $S\sub\R^2$ we denote by $|S|$ its area, i.e., (two-dimensional) Lebesgue measure.
\begin{lemma}[symplectically squeezing to almost half the size]\label{le:half} Let $n\in\N_0$, $Q,R\sub\R^2$ be nonempty open rectangles, such that $|Q|<2|R|$, $K\sub\R^{2n-2}$ a compact subset, $U\sub\R^{2n-2}$ an open subset containing $K$, and $A\sub Q\x K$ a countably $n$-rectifiable and $n$-negligible subset. Then $A$ injectively symplectically maps into $R\x U$.
\end{lemma}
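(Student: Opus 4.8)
The plan is to prove Lemma~\ref{le:half} by the symplectic folding construction outlined on p.~\pageref{idea:thm:squeeze}, using Theorem~\ref{thm:displ} to remove the self-overlap that folding creates. \emph{Step 1 (reduction to a model situation).} Using affine symplectomorphisms of $\R^2$ (translations composed with linear maps $(x,y)\mapsto(\lam x,\lam^{-1}y)$) on $Q$ and on $R$, and a translation of $\R^{2n-2}$, I would first bring the data into a normal form, and then, for a small parameter $\de>0$, replace $A$ by a symplectically equivalent set contained in $V_\de\x K$, where $V_\de$ is a rectangle with a $\de$-thickened slit removed (Figure~\ref{fig:V de proj A}). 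The slit is chosen so that $A$ splits along it into an ``upper'' part $A_+$ and a ``lower'' part $A_-$; here one uses that $A$ is $n$-negligible (not merely $s$-negligible for $s>n$) to push $A$ off the slit. One also keeps track of areas: $V_\de$ has area only slightly larger than $|Q|$, and ``one half of $V_\de$ enlarged by $\de$'' still has area less than $|R|$, which is where the hypothesis $|Q|<2|R|$ enters.

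\emph{Step 2 (folding, and perturbing via Theorem~\ref{thm:displ}).} Choose a symplectomorphism $\psi$ of $\R^2$ that is the identity near the ``neck'' of $V_\de$ and folds the part of the ambient rectangle above the slit down below it (Figure~\ref{fig:psi folded}); then $(\psi\x\id)(A_+)$ lies in the lower half $\x\,K$, but may meet $A_-$. Now $(\psi\x\id)(A_+)$ is countably $n$-rectifiable, being the image of a countably $n$-rectifiable set under the smooth map $\psi\x\id$, and $A_-\sub A$ is $n$-negligible; hence Theorem~\ref{thm:displ}, applied with $m=n$ (so that $2n-m=n$), yields a linear function $H:\R^{2n}\to\R$ with $\phi_H^t\big((\psi\x\id)(A_+)\big)\cap A_-=\emptyset$ for almost every $t$. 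Cutting $H$ off outside a large ball to a compactly supported $\wt H\in\HH\big(\R^{2n},\omst\big)$ that agrees with $H$ near $A$, and using continuity of the flow in $t$, I would fix a small $t>0$ for which $\phi_{\wt H}^t$ still disjoins $(\psi\x\id)(A_+)$ from $A_-$ and moves every point (in particular in the $\R^{2n-2}$-directions) by less than $\de$.

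\emph{Step 3 (assembling the embedding).} Define $\Phi$ to be $\phi_{\wt H}^t\circ(\psi\x\id)$ on a neighbourhood of $A_+$ and the identity on a neighbourhood of $A_-$; since $\psi$ and $\wt H$ are the identity near the neck, the two formulas agree on the overlap, so $\Phi$ is a well-defined symplectic map on a neighbourhood of $A$. It is injective on $A$ because $\phi_{\wt H}^t\circ(\psi\x\id)$ and $\id$ are injective and, by Step~2, $\Phi(A_+)\cap\Phi(A_-)=\emptyset$; hence $\Phi|_A$ is injective and ambiently symplectic. By construction $\Phi(A)$ is contained in $W\x U_0$, where $W$ is a $\de$-enlargement of the lower half of $V_\de$ — an open, connected, simply connected planar domain of area $<|R|$ — and $U_0\sub U$ is a small neighbourhood of $K$. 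A symplectic embedding $\si:W\inj R$ then exists ($W$ being a ``nice'' planar domain of smaller area), and $(\si\x\id)\circ\Phi$ maps $A$ injectively and symplectically into $R\x U$. Undoing the normalizations of Step~1 finishes the proof.

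\emph{Main obstacle.} I expect the crux to be Step~1: arranging, via a symplectomorphism, that $A$ sits inside $V_\de\x K$ and splits cleanly along the slit, with control on areas. This is exactly where the $n$-negligibility of $A$ must be used in an essential way — it is what makes the statement sharp (cf.\ Remark~\ref{rmk:negli}) — and it is also where one must be careful when $A$ is connected, since a connected set cannot literally be split; one then has to argue with pieces or exploit that $\psi\x\id$ and $\wt H$ are the identity near the neck, so that the folded map patches without conflict. A secondary difficulty is the bookkeeping in Steps~2--3 ensuring that the folded part of $A$, after perturbation by $\phi_{\wt H}^t$, is both disjoint from the unfolded part and contained in a region of area strictly below $|R|$; this is where Theorem~\ref{thm:displ} and the strict inequality $|Q|<2|R|$ are consumed.
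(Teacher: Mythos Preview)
Your overall strategy matches the paper's proof, but there is a genuine inconsistency between your Steps~2 and~3. In Step~2 you cut $H$ off \emph{outside a large ball}, so that $\wt H=H$ near $A$; in Step~3 you then assert that ``$\wt H$ is the identity near the neck'', so that the two branches of $\Phi$ patch smoothly. These two requirements are incompatible: if $\wt H=H$ near the neck, then $\phi_{\wt H}^t$ is the nonzero translation $\phi_H^t$ there, not the identity. The paper resolves this by cutting $H$ off \emph{in the $q$-direction}, setting $\wt H(q,p,z):=\rho(q)H(q,p,z)$ with $\rho\equiv0$ for $q\le\de$ and $\rho\equiv1$ for $q\ge\eps'$ (some $\eps'\in(\de,\eps/2)$). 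This makes $\phi_{\wt H}^t=\id$ on $(-\infty,\de]\x\R^{2n-1}$, so the piecewise map is smooth. But now one has lost the displacing property in the transition strip $\de<q<\eps/2$, and an extra argument is needed: the paper splits $A_+'=(\psi\x\id)(A_+)$ into three pieces $B_0$ ($q\le\de$), $B_1$ ($\de<q<\eps/2$), $B_2$ ($q\ge\eps/2$), and checks disjointness from $A_-$ separately. On $B_0$ one has $\phi=\id$ and $\psi=\id$, so $\phi(B_0)\sub A_+$ is disjoint from $A_-$ trivially; on $B_2$ one has $\phi_{\wt H}^t=\phi_H^t$ for small $t$, so Theorem~\ref{thm:displ} applies; and $B_1$ sits in a compact set with $p\ge\de>0$, hence at positive distance from $A_-\sub\{p\le0\}$, so a sufficiently small $t$ preserves disjointness. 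This three-region bookkeeping is the actual crux you are missing.

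Your identified ``main obstacle'' is a red herring. Step~1 does \emph{not} use the $n$-negligibility of $A$ at all: the embedding of $Q$ into $V_\de$ is a purely two-dimensional area statement (any rectangle of area $<2-2\de$ maps affinely into $(0,1-\de)\x(-1,1)\sub V_{1-\de}$, and $V_{1-\de}$ is symplectomorphic to $V_\de$), applied to the first factor via $\theta\x\id$. The split $A_+=\{p>0\}$, $A_-=\{p\le0\}$ is purely set-theoretic and needs no hypothesis on $A$; smoothness of the piecewise map comes from the $q$-cutoff above, not from ``pushing $A$ off the slit''. The $n$-negligibility is consumed exactly once, namely as the hypothesis on $B=A_-$ in Theorem~\ref{thm:displ}.
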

	
\begin{proof}[Proof of Theorem \ref{thm:squeeze}] Let $A$ be a subset of $\R^{2n}$ as in the hypothesis of Theorem \ref{thm:squeeze}, and $R_1,\ldots,R_n\sub\R^2$ be nonempty open rectangles. We choose a collection $Q_i\sub\R^2$, $i=1,\ldots,n$, of open rectangles, such that the closure of $A$ is contained in $Q_1\x\cdots\x Q_n$. Lemma \ref{le:half}, applied sufficiently many times, implies that $A$ injectively symplectically maps into some compact subset of $R_1\x Q_2\x\cdots\x Q_n$. Intertwining the roles of the first and second factors in the product $\R^{2n}=\R^2\x\cdots\x\R^2$ and again applying Lemma \ref{le:half} sufficiently many times, it follows that $A$ injectively symplectically maps into some compact subset of $R_1\x R_2\x Q_3\x\cdots\x Q_n$. Continuing the same way, an induction argument shows that $A$ injectively symplectically maps into some compact subset of $R_1\x\cdots\x R_n$. The conclusion of Theorem \ref{thm:squeeze} follows.
\end{proof}

In the proof of Lemma \ref{le:half} we will use the following remark.
\begin{rmk}[symplectically embedding subsets of the plane]\label{rmk:square slit}\begin{enui}
\item\label{rmk:square slit:V de}Let $\de\in(0,1)$. We define 
\[V_\de:=\big((0,1)\x(-1,1)\big)\wo\big([\de,1)\x[0,\de]\big).\]
See Figure \ref{fig:V de proj A}. This is an open subset of $\R^2$ that is symplectomorphic to $V_{1-\de}$. This follows from the fact that there is a Hamiltonian diffeomorphism of $(0,1)\x(-1,1)$ that bijectively maps the set $[\de,1)\x[0,\de]$ to the set $\big[1-\de,1\big)\x[0,1-\de]$. %
\footnote{Alternatively, it follows from \cite[Theorem 1]{GS}.} 
\footnote{Such a map is e.g.~given by the Hamiltonian time-$t$ flow of the suitably cut off function $H(q,p):=qp$ for a suitable $t$.} 
\item\label{rmk:square slit:prod} Let $Q\sub\R^2$ be an open rectangle of area $|Q|\leq2-2\de$. There is an affine symplectomorphism of $\R^2$ that maps $Q$ into the rectangle $\big(0,1-\de\big)\x(-1,1)$, which is contained in $V_{1-\de}$. Using Remark \reff{rmk:square slit:V de}, it follows that there is a symplectic embedding $\theta$ of $Q$ into $V_\de$.
\end{enui}
\end{rmk}
	
\begin{proof}[Proof of Lemma \ref{le:half}]\setcounter{claim}{0} By some rescaling argument, we may assume \Wlog that $|Q|<2$ and $|R|>1$. We choose 
\begin{align}\label{eq:eps}\eps&\in\big(0,|R|-1\big),\\
\label{eq:de}\de&\in\left(0,\min\left\{\frac\eps2,1-\frac{|Q|}2\right\}\right).
\end{align}
By \eqref{eq:de} we have $|Q|<2-2\de$. Therefore, there is a symplectic embedding $\theta$ as in Remark \ref{rmk:square slit}\reff{rmk:square slit:prod}. The set $(\theta\x\id)(A)$ is contained in $V_\de\x K$. It is countably $n$-rectifiable and $n$-negligible, as the same holds for $A$ by hypothesis, and $\theta\x\id$ is locally Lipschitz.%
\footnote{Here we use Remark \ref{rmk:im Lip}.} 
Therefore, \Wlog we may assume that
\begin{equation}\label{eq:A sub}A\sub V_\de\x K.\end{equation}
We choose a smooth function $f:\R\to[0,1]$, such that 
\[f=\left\{\begin{array}{ll}
0,&\textrm{on }\left(-\infty,\displaystyle\frac\eps2\right],\\
1,&\textrm{on }[\eps,\infty).
\end{array}\right.\]
We define the non-linear shear
\begin{equation}\label{eq:psi}\psi:\R^2\to\R^2,\qquad\psi(q,p):=\big(q,p-f(q)\big),\end{equation}
see Figure \ref{fig:psi folded} on p.~\pageref{fig:psi folded}. Writing a point in $\R^{2n}=\R\x\R\x\R^{2n-2}$ as $(q,p,z)$, we define
\begin{eqnarray}\nn&A_+:=\left\{(q,p,z)\in A\,\big|\,p>0\right\},\qquad A_-:=\left\{(q,p,z)\in A\,\big|\,p\leq0\right\},&\\
\label{eq:A+'}&A_+':=(\psi\x\id)(A_+),&
\end{eqnarray}
see Figure \ref{fig:V de proj A} on p.~\pageref{fig:V de proj A}. We define
\begin{equation}\label{eq:S}S:=\big([0,1]\x[-1,0]\big)\cup\big([0,\eps]\x[0,1]\big),\end{equation}
see Figure \ref{fig:S}.%
\begin{figure}
\centering

	\def\svgwidth{1\columnwidth}
\begingroup%
  \makeatletter%
  \providecommand\color[2][]{%
    \errmessage{(Inkscape) Color is used for the text in Inkscape, but the package 'color.sty' is not loaded}%
    \renewcommand\color[2][]{}%
  }%
  \providecommand\transparent[1]{%
    \errmessage{(Inkscape) Transparency is used (non-zero) for the text in Inkscape, but the package 'transparent.sty' is not loaded}%
    \renewcommand\transparent[1]{}%
  }%
  \providecommand\rotatebox[2]{#2}%
  \newcommand*\fsize{\dimexpr\f@size pt\relax}%
  \newcommand*\lineheight[1]{\fontsize{\fsize}{#1\fsize}\selectfont}%
  \ifx\svgwidth\undefined%
    \setlength{\unitlength}{340.15748031bp}%
    \ifx\svgscale\undefined%
      \relax%
    \else%
      \setlength{\unitlength}{\unitlength * \real{\svgscale}}%
    \fi%
  \else%
    \setlength{\unitlength}{\svgwidth}%
  \fi%
  \global\let\svgwidth\undefined%
  \global\let\svgscale\undefined%
  \makeatother%
  \begin{picture}(1,0.79166667)%
    \lineheight{1}%
    \setlength\tabcolsep{0pt}%
    \put(0,0){\includegraphics[width=\unitlength,page=1]{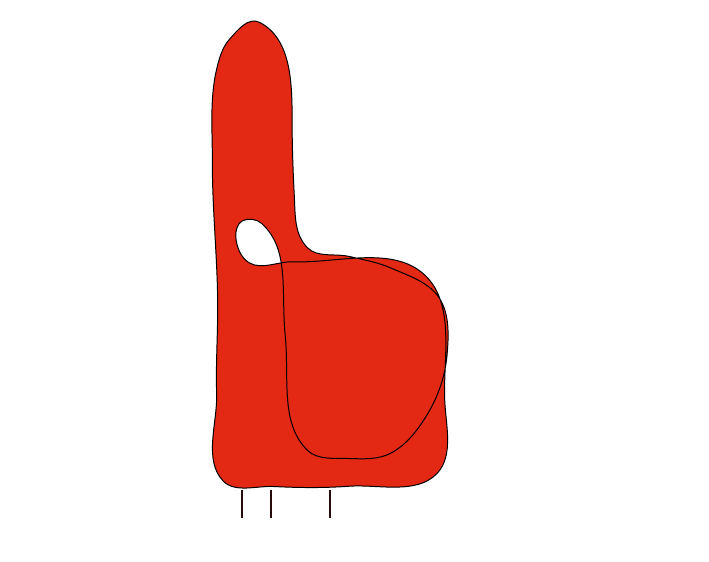}}%
    \put(0.32813534,0.0294832){\color[rgb]{0.00784314,0.00392157,0}\transparent{0.60583901}\makebox(0,0)[lt]{\lineheight{1.25}\smash{\begin{tabular}[t]{l}$\delta$\end{tabular}}}}%
    \put(0.37559155,0.02948322){\color[rgb]{0.00784314,0.00392157,0}\transparent{0.60583901}\makebox(0,0)[lt]{\lineheight{1.25}\smash{\begin{tabular}[t]{l}$\frac{\varepsilon}{2}$\end{tabular}}}}%
    \put(0.46063002,0.02948055){\color[rgb]{0.00784314,0.00392157,0}\transparent{0.60583901}\makebox(0,0)[lt]{\lineheight{1.25}\smash{\begin{tabular}[t]{l}$\varepsilon$\end{tabular}}}}%
    \put(0,0){\includegraphics[width=\unitlength,page=2]{squeezing3.pdf}}%
    \put(0.50246574,0.73057604){\color[rgb]{0,0,0}\makebox(0,0)[lt]{\lineheight{1.25}\smash{\begin{tabular}[t]{l}$S$\end{tabular}}}}%
  \end{picture}%
\endgroup%

\caption{The set $S$. It contains the projections of the folded set $A_+'=(\psi\x\id)(A_+)$ and of $A_-$ (both in red).}
\label{fig:S}
\end{figure}
Let $V$ be an open neighbourhood of $S$.
\begin{claim}\label{claim:phi} There exists a Hamiltonian diffeomorphism $\phi$ of $\R^{2n}$, such that
\begin{eqnarray}
\label{eq:phi=id}&\phi=\id\quad\textrm{on}\quad(-\infty,\de]\x\R^{2n-1},&\\
\label{eq:phi A+' cap}&\phi(A_+')\cap A_-=\emptyset,&\\
\label{eq:phi A+' sub}&\phi(A_+')\sub V\x U.&
\end{eqnarray}
\end{claim}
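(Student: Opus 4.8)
The plan is to construct $\phi$ as a suitably cut-off and time-reparametrised Hamiltonian flow obtained from Theorem~\ref{thm:displ}. First I would apply Theorem~\ref{thm:displ} with ambient dimension $2n$, with the countably $n$-rectifiable set taken to be $A_+'$ and the $n$-negligible set taken to be $A_-$. To see the hypotheses are met: $A_+$ is a subset of the countably $n$-rectifiable set $A$, hence countably $n$-rectifiable, and $\psi\x\id$ is a diffeomorphism (in particular locally Lipschitz), so by Lemma~\ref{le:count rect} and Remark~\ref{rmk:im Lip}, $A_+'=(\psi\x\id)(A_+)$ is countably $n$-rectifiable; similarly $A_-\sub A$ is $n$-negligible since $A$ is. Theorem~\ref{thm:displ} (with $m=n$, so $2n-m=n$) thus yields a linear function $H\colon\R^{2n}\to\R$ whose flow $\phi_H^t$ satisfies $\phi_H^t(A_+')\cap A_-=\emptyset$ for almost every $t\in\R$; pick one such $t_0$, which we may take arbitrarily small in absolute value since the set of good $t$ has full measure and hence accumulates at $0$.

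Next I would cut off $H$ to localise the perturbation. The key geometric point is that $A_+'$ and the region $(-\infty,\de]\x\R^{2n-1}$ are disjoint: by \eqref{eq:A sub} the set $A$ lies in $V_\de\x K$, and by the definition of $V_\de$ (Remark~\ref{rmk:square slit}\reff{rmk:square slit:V de}) every point of $A_+$ has $q$-coordinate either in $(0,\de)$ with $p\in(0,1)$, or in $[\de,1)$ with $p>\de$; applying $\psi\x\id$ only changes the $p$-coordinate, so actually one checks $A_+'$ is contained in a compact set $C$ whose $q$-coordinates are bounded away from $(-\infty,\de]$ (here I would verify that $\psi$ maps the relevant portion of $V_\de$ into a bounded region, using that $f$ is bounded and the $q$-range of $A_+$ is bounded). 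Choose a compactly supported smooth function $\chi\colon\R^{2n}\to[0,1]$ that equals $1$ on a neighbourhood of $C$ and vanishes on $(-\infty,\de]\x\R^{2n-1}$, and set $\wt H:=\chi H$. Because $\wt H$ is compactly supported, $\wt H\in\HH(\R^{2n},\omst)$ and $\phi_{\wt H}^t$ is a genuine Hamiltonian diffeomorphism for every $t$. Moreover, for $t$ in a small enough interval around $0$ the flow $\phi_{\wt H}^t$ keeps $A_+'$ inside the region where $\chi\equiv1$, hence $\phi_{\wt H}^t$ agrees with $\phi_H^t$ on $A_+'$; in particular $\phi_{\wt H}^{t_0}(A_+')=\phi_H^{t_0}(A_+')$, giving \eqref{eq:phi A+' cap}, and $\phi_{\wt H}^{t_0}=\id$ on $(-\infty,\de]\x\R^{2n-1}$ by the support condition on $\chi$, giving \eqref{eq:phi=id}. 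Set $\phi:=\phi_{\wt H}^{t_0}$.

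It remains to arrange \eqref{eq:phi A+' sub}, i.e.\ $\phi(A_+')\sub V\x U$. Observe that the projection of $A_+'$ to the $\R^2$-factor lies in $S$ (this is the content of Figure~\ref{fig:S}: $\psi$ pushes the upper-right portion of the rectangle down into $[0,1]\x[-1,0]$, while the thin left strip $[0,\eps]\x\cdots$ is left essentially in place, by the choice of $f$ which only shears where $q\ge\eps/2$), and the projection to $\R^{2n-2}$ lies in $K$; so $A_+'\sub S\x K\sub V\x U$ already, \emph{before} applying $\phi$. Since $V\x U$ is open, continuity of the flow and compactness of (the closure of) $A_+'$ let me shrink $|t_0|$ further so that $\phi(A_+')$ still lies in $V\x U$; I must check this shrinking is compatible with the earlier requirement that $t_0$ be a "good" time for the displacement — but the good times have full measure in every interval around $0$, so I simply first fix a small interval $I\ni 0$ on which all the cut-off and containment estimates hold, then pick $t_0\in I$ good for displacement. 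The main obstacle I anticipate is the bookkeeping in this last step: making the three small-time conditions (flow stays where $\chi\equiv1$; flow stays in $V\x U$; $t_0$ is a displacement time) simultaneously satisfiable, which requires noting that the first two are open conditions on $t_0$ satisfied near $0$ and the third holds on a full-measure set, so their intersection is nonempty. A secondary technical point worth care is verifying that $A_+'$ (or its closure) is bounded, so that the compactness arguments for the cutoff and for the open-condition arguments genuinely apply; this follows from $A\sub Q\x K$ with $Q,K$ bounded together with $\psi$ being proper on bounded $q$-strips.
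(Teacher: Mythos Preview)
Your overall strategy matches the paper's, but the ``key geometric point'' you rely on is false: $A_+'$ is \emph{not} in general disjoint from $(-\infty,\de]\x\R^{2n-1}$. You yourself note that points of $A_+$ may have $q$-coordinate in $(0,\de)$ (with $p\in(0,1)$), and since $\psi$ preserves the $q$-coordinate, the corresponding points of $A_+'=(\psi\x\id)(A_+)$ still have $q\in(0,\de)\sub(-\infty,\de]$. Hence there is no smooth cutoff $\chi$ that is $\equiv1$ on a neighbourhood of $A_+'$ and $\equiv0$ on $(-\infty,\de]\x\R^{2n-1}$, so your conclusion that $\phi_{\wt H}^{t_0}$ agrees with $\phi_H^{t_0}$ on all of $A_+'$ is unavailable.

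The paper's remedy is to accept that the cut-off flow will not agree with the linear flow on all of $A_+'$, and instead to split $A_+'=B_0\cup B_1\cup B_2$ according to whether $q\le\de$, $\de<q<\eps/2$, or $q\ge\eps/2$, using a cutoff $\rho=\rho(q)$ with $\rho=0$ on $(-\infty,\de]$ and $\rho=1$ on $[\eps',\infty)$ for some $\eps'\in(\de,\eps/2)$. On $B_0$ the cut-off flow is the identity, but since $\psi=\id$ there, $B_0\sub A_+$ is automatically disjoint from $A_-$. On $B_2$ the cut-off flow equals $\phi_H^t$ for small $t$, so Theorem~\ref{thm:displ} supplies the displacement. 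On the transition piece $B_1$ neither holds; however $B_1$ lies in the compact set $[\de,\eps/2]\x[\de,1]\x K$, which is separated from $A_-\sub\R\x(-\infty,0]\x\R^{2n-2}$, and continuity keeps $\phi_{\wt H}^t(B_1)$ disjoint from $A_-$ for small $t$. With this three-part argument replacing your single cutoff step, the small-time bookkeeping of your final paragraph goes through essentially as you describe.
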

\begin{proof}[Proof of Claim \ref{claim:phi}] Our hypothesis that $A$ is countably $n$-rectifiable and the Lipschitz-property of $\psi$ imply that the set $A_+'$ is countably $n$-rectifiable. Our hypothesis that $A$ is $n$-negligible implies the same for $A_-$. Therefore, the hypotheses of Theorem \ref{thm:displ} are satisfied with $m:=n$ and $A,B$ replaced by $A_+',A_-$. Applying this theorem, there exists a linear function $H:\R^{2n}\to\R$, such that for almost every $t\in\R$, we have
\begin{equation}\label{eq:phi H t A+'}
\phi_H^t(A_+')\cap A_-=\emptyset.
\end{equation}
By \eqref{eq:de} we have $\de<\frac\eps2$. Hence there is a number $\eps'\in\left(\de,\frac\eps2\right)$. We choose a smooth function $\rho:\R\to\R$, such that 
\[\rho=\left\{\begin{array}{ll}
0,&\textrm{on }(-\infty,\de],\\
1,&\textrm{on }[\eps',\infty).
\end{array}\right.\]
We define the function
\[\wt H:\R^{2n}\to\R,\qquad\wt H(q,p,z):=\rho(q)H(q,p,z).\]
Since $H$ is linear, its Hamiltonian vector field is constant. Since $\wt H=H$ on $\left(\eps',\infty\right)\x\R^{2n-1}$ and $\eps'<\frac\eps2$, it follows that there exists a real number $t_0>0$, such that
\begin{equation}\label{eq:phi wt H t}\phi_{\wt H}^t=\phi_H^t\quad\textrm{on}\quad\left[\frac\eps2,\infty\right)\x\R^{2n-1},\qquad\forall t\in[0,t_0).
\end{equation}
We define
\begin{align}\label{eq:B0}B_0&:=A_+'\cap\big((-\infty,\de]\x\R^{2n-1}\big),\\
\label{eq:B1}B_1&:=A_+'\cap\left(\left(\de,\frac\eps2\right)\x\R^{2n-1}\right),\\
\label{eq:B2}B_2&:=A_+'\cap\left(\left[\frac\eps2,\infty\right)\x\R^{2n-1}\right).
\end{align}
See Figure \ref{fig:B}. %
\begin{figure}
\centering

	\def\svgwidth{1\columnwidth}
	\import{./figures/}{squeezing4.pdf_tex}

\caption{The projections of the sets $B_0,B_1,B_2$ to the first factor in $\R^{2n}=\R^2\x\cdots\x\R^2$.}
\label{fig:B}
\end{figure}
By \eqref{eq:psi} and the fact $f\big|_{\left(-\infty,\frac\eps2\right]}=0$, we have $\psi=\id$ on $\left(-\infty,\frac\eps2\right]\x\R$. It follows that 
\begin{align}\label{eq:B0 A+}B_0&=A_+\cap\big((-\infty,\de]\x\R^{2n-1}\big),\\
\label{eq:B1 A+}B_1&=A_+\cap\left(\left(\de,\frac\eps2\right)\x\R^{2n-1}\right).\end{align}
By \eqref{eq:A sub}, the right hand side of \eqref{eq:B1 A+} is contained in the compact set $C:=\left[\de,\frac\eps2\right]\x[\de,1]\x K$, and hence
\begin{equation}\label{eq:B1 sub C}B_1\sub C.
\end{equation}
The set $A_-$ is contained in the closed set $\R\x(-\infty,0]\x\R^{2n-2}$, which is disjoint from $C$. Using \eqref{eq:B1 sub C}, it follows that there exists a real number $t_1>0$, such that
\begin{equation}\label{eq:phi wt H t B1}\phi_{\wt H}^t(B_1)\cap A_-=\emptyset,\qquad\forall t\in(0,t_1).
\end{equation}
We have 
\[A_+'\sub S\x K,\]
where $S$ is defined as in \eqref{eq:S}. The set $S\x K$ is compact and contained in the open set $V\x U$. It follows that there exists a real number $t_2>0$, such that
\begin{equation}\label{eq:V U}\phi_{\wt H}^t(A_+')\sub V\x U,\qquad\forall t\in(0,t_2).
\end{equation}
We choose $t\in\big(0,\min\{t_0,t_1,t_2\}\big)$, such that \eqref{eq:phi H t A+'} is satisfied. We define
\[\phi:=\phi_{\wt H}^t.\]
Claim \ref{claim:phi} follows from the next claim.
\begin{claim}\label{claim:phi cond} The map $\phi$ satisfies conditions (\ref{eq:phi=id},\ref{eq:phi A+' cap},\ref{eq:phi A+' sub}).
\end{claim}
\begin{pf}[Proof of Claim \ref{claim:phi cond}] Since $\wt H=0$ on $(-\infty,\de]\x\R^{2n-1}$, we have $\phi=\id$ on $(-\infty,\de]\x\R^{2n-1}$. Hence $\phi$ satisfies condition \eqref{eq:phi=id}.

We check condition \reff{eq:phi A+' cap}. By (\ref{eq:B0},\ref{eq:B1},\ref{eq:B2}) we have
\begin{equation}\label{eq:A+'=}A_+'=B_0\cup B_1\cup B_2.\end{equation}
Conditions (\ref{eq:phi=id},\ref{eq:B0 A+}) and the fact $A_+\cap A_-=\emptyset$ imply that
\begin{equation}\label{eq:phi B0}\phi(B_0)\cap A_-=\emptyset.
\end{equation}
By (\ref{eq:B2},\ref{eq:phi wt H t},\ref{eq:phi H t A+'}) we have
\[\phi(B_2)\cap A_-=\emptyset.\]
Combining this with (\ref{eq:A+'=},\ref{eq:phi B0},\ref{eq:phi wt H t B1}), it follows that
\[\phi(A_+')\cap A_-=\emptyset.\]
Therefore, $\phi$ satisfies condition \eqref{eq:phi A+' cap}. By \eqref{eq:V U}, it satisfies condition \eqref{eq:phi A+' sub}.
\end{pf}
This proves Claim \ref{claim:phi cond} and therefore Claim \ref{claim:phi}.
\end{proof}
It follows from (\ref{eq:S},\ref{eq:eps}) that there exists an open neighbourhood $V$ of $S$ and a symplectic embedding
\begin{equation}\label{eq:chi}\chi:V\to R.\end{equation}
We choose $\phi$ as in Claim \ref{claim:phi}. We denote
\[V_\de^+:=V_\de\cap\big(\R\x(0,\infty)\big),\qquad V_\de^-:=V_\de\cap\big(\R\x(-\infty,0]\big)=(0,1)\x(-1,0].\]
We define the map
\[\wt\phi:V_\de\x\R^{2n-2}\to\R^{2n},\qquad\wt\phi:=\left\{\begin{array}{ll}
(\chi\x\id)\circ\phi\circ(\psi\x\id),&\textrm{on }V_\de^+,\\
\chi\x\id,&\textrm{on }V_\de^-.
\end{array}\right.\]
It follows from \eqref{eq:phi=id} and the fact $\psi\big|_{\left(-\infty,\frac\eps2\right]\x\R}=\id$ that the map $\wt\phi$ is smooth. It is symplectic. By (\ref{eq:A+'},\ref{eq:phi A+' sub},\ref{eq:chi}), $\wt\phi$ maps $A$ into $R\x U$. It follows from \eqref{eq:phi A+' cap} that the restriction of $\wt\phi$ to $A$ is injective. Hence this restriction is an injective symplectic map from $A$ into $R\x U$. This concludes the proof of Lemma \ref{le:half}.
\end{proof}

\subsection*{Proof of Corollary \ref{cor:inf d}}
\label{proof:cor:inf d} The next claim implies that $\Ddisp\geq n$. Let $D\in[0,n)$ and $A\sub\R^{2n}$ be a $D$-Hausdorff dimensional, bounded, and countably $\lceil D\rceil$-rectifiable subset. 
\setcounter{claim}{0}
\begin{claim}\label{claim:wt H} For every $\eps>0$ there exists $\wt H\in C^\infty(\R^{2n},\R)$, such that
\begin{align}\label{eq:phi wt H 1}\phi_{\wt H}^1(A)\cap A&=\emptyset,\\
\label{eq:Vert wt H Vert}\Vert\wt H\Vert&<\eps.
\end{align}
\end{claim}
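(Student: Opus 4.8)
The plan is to deduce Claim \ref{claim:wt H} from Theorem \ref{thm:displ} together with a cut-off argument, estimating the Hofer norm of the cut-off Hamiltonian in terms of the (small) displacement vector. First I would observe that since $A$ is $D$-Hausdorff dimensional with $D<n$, it is $n$-negligible (indeed $s$-negligible for any $s>D$, hence for $s=n$), and it is countably $\lceil D\rceil$-rectifiable, hence countably $n$-rectifiable. Thus the hypotheses of Theorem \ref{thm:displ} are met with $m=n$ and $B=A$; in fact it is cleaner to go back to Lemma \ref{le:displ} with $\ell=2n$, $m=n$: there is a vector $v_0\in\R^{2n}$ and a full-measure set of $t\in\R$ with $(A+tv_0)\cap A=\emptyset$. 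The key extra input I need is that the displacing translation can be taken arbitrarily small: if $t_\ast$ works then so does every $t$ near $t_\ast$ in the full-measure set, but more usefully, rescaling $v_0$ shows that for every $\eta>0$ there is a vector $v$ with $|v|<\eta$ and $(A+v)\cap A=\emptyset$ (pick $t$ small in the full-measure set $S$ from \eqref{eq:S:=} and set $v:=tv_0$).

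Next I would realize this translation as the time-$1$ flow of a linear Hamiltonian and then cut it off. The linear function $H_v(x):=\omst(v,x)$ has $\phi_{H_v}^1(x)=x+v$, so $\phi_{H_v}^1(A)\cap A=\emptyset$, but $\Vert H_v\Vert=\infty$ because $H_v$ is unbounded on $\R^{2n}$. Since $A$ is bounded, choose $R>0$ with $A\subseteq B^{2n}_R$, and also $A+v\subseteq B^{2n}_R$ after shrinking $\eta$. Pick a smooth compactly supported cut-off $\beta:\R^{2n}\to[0,1]$ with $\beta\equiv1$ on $\overline{B^{2n}_{2R}}$ and $\supp\beta\subseteq B^{2n}_{3R}$, and set $\wt H:=\beta H_v$. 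Then $\phi_{\wt H}^t=\phi_{H_v}^t$ on $B^{2n}_R$ for all $t\in[0,1]$ provided the straight-line trajectories $x+tv$ stay inside $B^{2n}_{2R}$, which holds once $|v|\le R$; hence $\phi_{\wt H}^1(A)=A+v$ and \eqref{eq:phi wt H 1} holds. For the norm, $\Vert\wt H\Vert=\sup_{\R^{2n}}\wt H-\inf_{\R^{2n}}\wt H\le\sup_{B^{2n}_{3R}}|H_v|+\ldots\le C\,|v|$ where $C$ depends only on $R$ (concretely $\Vert\wt H\Vert\le 2\cdot 3R\,|v|$ by Cauchy--Schwarz, using $\wt H$ is time-independent). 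Choosing $\eta:=\eps/(6R)$ and then $v$ with $|v|<\eta$ as above gives \eqref{eq:Vert wt H Vert}.

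I would then conclude: Claim \ref{claim:wt H} shows $e(A)=0$ for every such $A$, so no $D$-Hausdorff dimensional, bounded, countably $\lceil D\rceil$-rectifiable subset with $D<n$ has positive displacement energy; therefore the infimum defining $\Ddisp$ is $\ge n$. For the reverse inequality $\Ddisp\le n$, I would invoke a nonempty closed Lagrangian submanifold $L$ of $\R^{2n}$ (e.g.\ the standard torus $\T^n$): it is $n$-dimensional, bounded, a smooth manifold hence countably $n$-rectifiable with $\lceil n\rceil=n$, and has $e(L)>0$ by the classical positivity of Lagrangian displacement energy (cf.\ \cite{Che}). Hence $\Ddisp\le n$, and combining, $\Ddisp=n$.

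The main obstacle, and the only genuinely non-routine point, is making the displacing vector small while keeping the Hofer norm controlled: one must be careful that the cut-off function $\beta$ is chosen once and for all (independent of $v$), so that the constant $C$ in $\Vert\wt H\Vert\le C|v|$ does not blow up as $v\to0$, and that the trajectories under $\phi_{\wt H}^t$ genuinely agree with translation on a neighbourhood of $A$ throughout the time interval $[0,1]$. Both are handled by fixing $R$ with $A\subseteq B^{2n}_R$ first and only afterwards shrinking $|v|$; everything else (rectifiability, negligibility of $A$ when $D<n$, applying Lemma \ref{le:displ}) is immediate from the results already established.
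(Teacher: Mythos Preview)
Your proof is correct and follows essentially the same route as the paper. The paper applies Corollary~\ref{cor:displ} to obtain a linear $H$ with $\phi_H^t(A)\cap A=\emptyset$ for almost every $t$, fixes a cutoff $\rho$ equal to $1$ on the compact set $\bigcup_{t\in[0,1]}\phi_H^t(\overline A)$, and sets $\wt H:=t\rho H$ for a small admissible $t$; your $\wt H=\beta H_v$ with $v=tv_0$ is literally the same function (since $H_v=tH$), just with the scalar $t$ absorbed into the vector before cutting off.
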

\begin{proof}[Proof of Claim \ref{claim:wt H}] By Definition \ref{def:neg Hd dim} $A$ is $s$-negligible for some $s\in[D,n)$. Hence $A$ is $n$-negligible. Since $A$ is $\lceil D\rceil$-rectifiable, it is $n$-rectifiable. Therefore, it satisfies the hypothesis of Corollary \ref{cor:displ}. Applying that corollary, there exists a function $H$ as in the conclusion of that theorem. Since $A$ is bounded, the sets $\BAR A$ and hence $K:=\bigcup_{t\in[0,1]}\phi_H^t(\BAR A)$ are compact. We choose a function $\rho\in C^\infty(\R^{2n},\R)$ that equals 1 on some compact neighbourhood of $K$ and has compact support. By \eqref{eq:phi t H} there exists $t\in[0,1]$, such that
\begin{align}\label{eq:phi H t A}\phi_H^t(A)\cap A&=\emptyset,\\
\nn t&<\frac\eps{\Vert\rho H\Vert}.\end{align}
We define $\wt H:=t\rho H$. This function satisfies \eqref{eq:Vert wt H Vert}. We have
\[\phi_{\wt H}^1=\phi_{\rho H}^t,\qquad\phi_{\rho H}^t=\phi_H^t\quad\textrm{on}\quad\BAR A.\]
Using \eqref{eq:phi H t A}, it follows that $\phi_{\wt H}^1(A)\cap A=\emptyset$, i.e., \eqref{eq:phi wt H 1} holds. This proves Claim \ref{claim:wt H}.
\end{proof}
Since $\left(\R^{2n},\omst\right)$ is geometrically bounded, the displacement energy of every nonempty closed Lagrangian submanifold of $\R^{2n}$ is positive, see e.g.~\cite{Che}. It follows that $\Ddisp\leq n$. Therefore, by Claim \ref{claim:wt H}, we have $\Ddisp=n$. This proves Corollary \ref{cor:inf d}. \qed

\section{Proofs of three of the related results}\label{sec:proof prop}
\subsection{Proof of  Proposition \ref{prop:Lag not C1 inst displ} (instantaneous non-displaceability of Lagrangian submanifold)}\label{sec:proof prop:Lag}
The proof of Proposition \ref{prop:Lag not C1 inst displ} is based on a result of M.~Gromov, which states that a Hamiltonian isotopy does not displace the zero-section of the cotangent bundle of a nonempty closed manifold. We reduce to the cotangent bundle via Weinstein's Lagrangian neighbourhood theorem.

This reduction is based on a \emph{$C^1$-close symplectic isotopy lemma}, which is a version for a noncompact manifold of the following fact: On a closed symplectic manifold $(M,\om)$ every symplectomorphism that is $C^1$-close to the identity is the time-1-restriction of a symplectic isotopy that stays $C^1$-close to the identity. This follows from Weinstein's Lagrangian neighbourhood theorem applied to the diagonal in $M\x M$.

The next remark is a reformulated version of Gromov's result.
\begin{rmk}[Hamiltonian isotopy in cotangent bundle]\label{rmk:Ham isot} Let $Q$ be a closed manifold. We equip $T^*Q$ with the canonical symplectic form $\omcan$ and denote by $0^Q\sub T^*Q$ the 0-section. Let $W$ be an open neighbourhood of $0^Q$ and $\psi\in C^\infty\big([0,1]\x W,T^*Q\big)$, such that 
\[\psi_0(0^Q)=0^Q,\]
and denoting
\begin{equation}\label{eq:Xt}X_t:=\left(d\psi_t\right)^{-1}\left.\frac d{ds}\right|_{s=t}\psi_s,\end{equation}
there exists a function $H\in C^\infty\big([0,1]\x W,\R\big)$ satisfying
\begin{equation}\label{eq:d psi t}\iota_{X_t}\omcan=dH_t,\qquad\forall t\in[0,1].\end{equation}
Then we have
\begin{equation}\label{eq:0Q}0^Q\cap\psi_1(0^Q)\neq\emptyset.\end{equation}
To see this, we cut off the function $(t,x)\mapsto\psi^t_*H_t:=H_t\circ\psi_t^{-1}$ outside of some open neighbourhood of the compact set $\bigcup_{t\in[0,1]}\psi_t(Q)$, obtaining a compactly supported function $\wt H\in C^\infty\big([0,1]\x T^*Q,\R\big)$. By \eqref{eq:Xt} $\psi$ is the flow of the time-dependent vector field $\left(\psi^t_*X_t\right)_t$. By \eqref{eq:d psi t} we have $\psi^t_*X_t=X_{\psi^t_*H_t}$. It follows that
\[\phi_{\wt H}^t=\psi_t\quad\textrm{on}\quad Q.\]
The condition \eqref{eq:0Q} now follows from a result by M.~Gromov, see \cite[Theorem 11.3.10]{MSIntro}.
\end{rmk}

To formulate the $C^1$-close symplectic isotopy lemma, we denote for each subset $A$ of a topological space $X$ by $\Int A$ the interior of $A$ in $X$. For each subset $A$ of a set $S$ we denote by $i_A:A\to S$ the inclusion map. For each pair of symplectic manifolds $(M,\om),(M',\om')$ we denote
\[\Embs(M',M):=\big\{\textrm{symplectic embedding }M'\to M\big\}.\]
Let $(M,\om)$ be a symplectic manifold without boundary, and $K_0,K$ be compact submanifolds of $M$ 
of dimension $\dim M$ (possibly with boundary), such that
\begin{equation}\label{eq:K0 sub Int K}K_0\sub\Int K.\end{equation}
Let $\U_0$ be a $C^1$-neighbourhood of $i_{K_0}$ in $\Embs(K_0,M)$.
\begin{lemma}[$C^1$-close symplectic isotopy]\label{le:sympl isot} There exists a $C^1$-neighbourhood $\U$ of $i_K$ in $\Embs(K,M)$, such that for every $\phi\in\U$ there exists a map $\psi\in C^\infty\big([0,1]\x K_0,M\big)$ satisfying
\begin{align}
\label{eq:psi t}\psi_t:=\psi(t,\cdot)&\in\U_0,\qquad\forall t\in[0,1],\\
\label{eq:psi 0}\psi_0&=i_{K_0},\\
\label{eq:psi 1}\psi_1&=\phi|_{K_0}.
\end{align}
\end{lemma}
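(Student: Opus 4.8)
The plan is to reduce the noncompact statement to the closed-manifold fact it is modelled on, via Weinstein's Lagrangian neighbourhood theorem applied to the diagonal. First I would pass to the product $M\times M$ equipped with $\omega\oplus(-\omega)$, in which the diagonal $\Delta=\{(x,x):x\in M\}$ is a Lagrangian submanifold. A symplectic embedding $\phi\colon K\to M$ that is $C^1$-close to $i_K$ has graph $\mathrm{gr}(\phi)=\{(x,\phi(x)):x\in K\}$, which is a Lagrangian submanifold of $M\times M$ that is $C^1$-close to $\Delta\cap(K\times M)$; the point is that for $\phi$ sufficiently $C^1$-close to the inclusion, $\mathrm{gr}(\phi)$ sits inside a Weinstein tubular neighbourhood $N$ of $\Delta$ that is symplectomorphic to a neighbourhood of the zero-section in $T^*\Delta\cong T^*M$, and under this identification $\mathrm{gr}(\phi)$ becomes the graph of a closed (indeed exact, since we can arrange it over a slightly larger compact piece and there is no cohomology to worry about on the relevant compact region — more carefully, one works over $K$ which retracts, or one simply takes the fibrewise-linear interpolation of the primitive) $1$-form $\sigma_\phi$ on a neighbourhood of $K$ in $M$. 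Then the obvious linear interpolation $t\mapsto\mathrm{gr}$ of $t\sigma_\phi$ is a path of Lagrangians from $\Delta$-piece to $\mathrm{gr}(\phi)$, staying $C^1$-close to $\Delta$, hence each member is itself a graph of a symplectic embedding $\psi_t\colon K_0\to M$ provided we have shrunk from $K$ to $K_0$ to absorb the loss of domain when passing through the Weinstein chart.

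The key steps in order: (1) fix a Weinstein neighbourhood $N\subset M\times M$ of $\Delta$ and a symplectomorphism $\Phi\colon N\to N'\subset T^*M$ with $\Phi(\Delta\cap N)=0^M\cap N'$, chosen so that $\Delta\cap(K\times M)\subset N$ — here \eqref{eq:K0 sub Int K} is used to have room between $K_0$ and $K$. (2) Choose the $C^1$-neighbourhood $\U$ of $i_K$ small enough that for $\phi\in\U$: (a) $\mathrm{gr}(\phi)\cap(K\times M)\subset N$, and (b) $\Phi(\mathrm{gr}(\phi))$ is the graph of a $C^1$-small closed $1$-form $\sigma_\phi$ over a neighbourhood of $K_0$ in $M$ (using that a $C^1$-small Lagrangian section of $T^*M$ near the zero-section is a graph of a $1$-form, which is automatically closed). (3) Define $\psi_t$ by: $\mathrm{gr}(\psi_t)$ is $\Phi^{-1}$ of the graph of $t\sigma_\phi$ over $K_0$; check $t\sigma_\phi$ stays $C^1$-small so $\psi_t$ is a well-defined symplectic embedding of $K_0$ landing in $\U_0$ after further shrinking $\U$, and that $\psi_0=i_{K_0}$, $\psi_1=\phi|_{K_0}$. (4) Smoothness of $(t,x)\mapsto\psi_t(x)$ is immediate from the construction since everything is built from the fixed smooth $\Phi$ and the linear-in-$t$ family $t\sigma_\phi$. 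One should also record, for the application in Remark \ref{rmk:Ham isot}, that this $\psi$ is generated by a Hamiltonian: the generating function on the $T^*M$ side is the natural potential $x\mapsto\int$ of $\sigma_\phi$ pulled through, but for the present lemma as stated only the symplectic-isotopy conclusion \eqref{eq:psi t}--\eqref{eq:psi 1} is needed.

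The main obstacle I anticipate is purely the bookkeeping of \emph{quantified $C^1$-smallness}: one must choose $\U$ depending on $\U_0$, on $\Phi$, and on the compact sets, and verify that graphs of $C^1$-small symplectic embeddings are graphs of $C^1$-small (hence closed) $1$-forms and that the interpolation does not leave the chart $N'$ nor the neighbourhood $\U_0$ — this is where the compactness of $K$, $K_0$ and the inclusion \eqref{eq:K0 sub Int K} do the real work, letting one pass from ``open neighbourhood'' statements to uniform estimates. The genuinely conceptual input (Weinstein's theorem, and the fact that a Lagrangian $C^1$-close to the zero-section is a $1$-form graph) is standard; no new idea is required beyond carefully setting up the neighbourhoods.

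\begin{proof}[Sketch of the intended argument]
Apply Weinstein's Lagrangian neighbourhood theorem to the diagonal $\Delta\subset(M\times M,\om\oplus(-\om))$ to obtain an open neighbourhood $N$ of $\Delta$ with $\Delta\cap(K\times M)\subset N$ and a symplectomorphism $\Phi\colon N\to N'$ onto an open neighbourhood $N'$ of the zero-section $0^M$ in $T^*M$ with $\Phi|_\Delta$ the obvious identification $\Delta\cong 0^M$. For $\phi$ sufficiently $C^1$-close to $i_K$, the graph $\mathrm{gr}(\phi)\subset M\times M$ lies (over $K$) inside $N$, and $\Phi(\mathrm{gr}(\phi))$ is a Lagrangian section of $T^*M$ that is $C^1$-close to $0^M$ over a neighbourhood of $K_0$; hence it equals the graph of a $C^1$-small, automatically closed $1$-form $\sigma_\phi$ there. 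Put $\psi_t\colon K_0\to M$ to be the symplectic embedding whose graph is $\Phi^{-1}\big(\mathrm{gr}(t\sigma_\phi|_{K_0})\big)$. Shrinking the $C^1$-neighbourhood $\U$ of $i_K$ (using compactness of $K_0\subset\Int K$) we ensure $t\sigma_\phi$ stays small enough that $\psi_t\in\U_0$ for all $t\in[0,1]$; by construction $\psi_0=i_{K_0}$, $\psi_1=\phi|_{K_0}$, and $(t,x)\mapsto\psi_t(x)$ is smooth. This produces the required $\psi$ and proves the lemma.
\end{proof}
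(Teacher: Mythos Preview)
Your approach is correct and essentially identical to the paper's: the paper also applies Weinstein's Lagrangian neighbourhood theorem to the diagonal in $(M\times M,\om\oplus(-\om))$, packages the resulting correspondence between $C^1$-close symplectic embeddings and $C^1$-small closed $1$-forms into an auxiliary lemma (with an intermediate compact $Q$ between $K_0$ and $K$), and then defines $\psi_t$ via the linear interpolation $t\sigma_\phi$. Your parenthetical about exactness is unnecessary---only closedness and convexity of the space of small closed $1$-forms are used for the lemma as stated; exactness enters later in the application, where the paper separately arranges $H^1_{\mathrm{dR}}(K_0)=0$.
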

For a proof of this lemma see p.~\pageref{proof:le:sympl isot}.

We are now ready for the proof of Proposition \ref{prop:Lag not C1 inst displ}.
\begin{proof}[Proof of Proposition \ref{prop:Lag not C1 inst displ} (p.~\pageref{prop:Lag not C1 inst displ})]\label{proof:prop:Lag not C1 inst displ} Let $L$ be a nonempty closed Lagrangian submanifold of $\R^{2n}$. We choose compact submanifolds $K_0,K$ of $\R^{2n}$ of dimension $2n$, such that the first de Rham cohomology of $K_0$ vanishes and
\begin{align}\label{eq:L sub Int K0}L&\sub\Int K_0,\\
\label{eq:K0 Int K}K_0&\sub\Int K.
\end{align}
By Weinstein's Lagrangian neighbourhood theorem there exist open neighbourhoods $U$ of $L$ in $\Int K_0$ and $V$ of the 0-section in $T^*L$ and a symplectomorphism $U\to V$ that restricts to the canonical inclusion of $L$ in $T^*L$. We define
\begin{equation}\label{eq:U0}\U_0:=\big\{\phi_0\in\Embs\big(K_0,\R^{2n}\big)\,\big|\,\phi_0(L)\sub U\big\}.\end{equation}
By \eqref{eq:L sub Int K0} the condition $\phi_0(L)\sub U$ makes sense, and therefore, the set $\U_0$ is well-defined. It is a compact-open neighbourhood of $i_{K_0}$ in $\Embs\big(K_0,\R^{2n}\big)$. Using \eqref{eq:K0 Int K}, Lemma \ref{le:sympl isot} therefore implies that there exists a set $\U$ as in that lemma. We denote by $\Ham(\R^{2n})$ the group of Hamiltonian diffeomorphism of $\R^{2n}$ w.r.t.~$\omst$ and define
\[\U':=\big\{\phi'\in\Ham(\R^{2n})\,\big|\,\phi'|_K\in\U\big\}.\]
This is a weak $C^1$-neighbourhood of $\id$ in $\Ham(\R^{2n})$. Proposition \ref{prop:Lag not C1 inst displ} therefore follows from the next claim.
\setcounter{claim}{0}
\begin{claim}\label{claim:phi'}
For every $\phi'\in\U'$ we have
\[L\cap\phi'(L)\neq\emptyset.\]
\end{claim}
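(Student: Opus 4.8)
The plan is to reduce this claim to M.~Gromov's non-displaceability statement for the zero section, in the packaged form of Remark \ref{rmk:Ham isot}, by transporting the relevant isotopy through the Weinstein chart. Denote by $\Phi:U\to V$ the symplectomorphism furnished by Weinstein's theorem; since $\Phi$ restricts to the canonical inclusion $L\inj T^*L$, we have $\Phi(L)=0^L$.

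Fix $\phi'\in\U'$. Since $\phi'|_K\in\U$, Lemma \ref{le:sympl isot} produces a smooth family $\psi\in C^\infty\big([0,1]\x K_0,\R^{2n}\big)$ of symplectic embeddings with $\psi_t\in\U_0$ for all $t$ (by \eqref{eq:psi t}), $\psi_0=i_{K_0}$ (by \eqref{eq:psi 0}), and $\psi_1=\phi'|_{K_0}$ (by \eqref{eq:psi 1}). By the definition \eqref{eq:U0} of $\U_0$, each $\psi_t$ satisfies $\psi_t(L)\sub U$; since $[0,1]\x L$ is compact, the tube lemma lets me pass to an open neighbourhood $W_0$ of $L$ with $W_0\sub U$ and $\psi_t(W_0)\sub U$ for all $t$. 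Conjugating, I put $W:=\Phi(W_0)$, an open neighbourhood of $0^L$ in $T^*L$, and define $\wt\psi_t:=\Phi\circ\psi_t\circ\Phi^{-1}:W\to T^*L$. This is a smooth family of symplectic embeddings with $\wt\psi_0=\id_W$.

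Next I check the hypotheses of Remark \ref{rmk:Ham isot} with $Q:=L$ and this $\wt\psi$. We have $\wt\psi_0(0^L)=0^L$ by \eqref{eq:psi 0}, so the only real point is that the generating vector field $X_t$ of $\wt\psi_t$, defined as in \eqref{eq:Xt}, is Hamiltonian. Conjugation by the \emph{fixed} symplectomorphism $\Phi$ transports generating vector fields, so $X_t=\Phi_*\big(Y_t|_{W_0}\big)$, where $Y_t$ is the generating vector field of the symplectic isotopy $\psi_t:K_0\to\R^{2n}$; hence $\Phi^*\big(\iota_{X_t}\omcan\big)=\iota_{Y_t}\big(\Phi^*\omcan\big)=\iota_{Y_t}\omst$ as $1$-forms on $W_0$. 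Now $\iota_{Y_t}\omst$ is a closed $1$-form on all of $K_0$ (closed because $\psi_t$ is a symplectic isotopy), so by our choice of $K_0$ with vanishing first de Rham cohomology it admits a primitive $G_t\in C^\infty(K_0)$; normalizing, e.g.\ by $\int_{K_0}G_t=0$ with respect to a fixed volume form, I may take $G_t$ to depend smoothly on $t$. Then $H_t:=G_t\circ\Phi^{-1}$ is a smooth function on $W$ with $\iota_{X_t}\omcan=dH_t$, which is \eqref{eq:d psi t}.

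With these hypotheses verified, Remark \ref{rmk:Ham isot} gives $0^L\cap\wt\psi_1(0^L)\neq\emptyset$. Since $\Phi^{-1}(0^L)=L$ and $\psi_1=\phi'|_{K_0}\in\U_0$, this reads $0^L\cap\Phi\big(\phi'(L)\big)\neq\emptyset$ with $\phi'(L)\sub U$, and applying the injective map $\Phi^{-1}$ yields $L\cap\phi'(L)\neq\emptyset$, as claimed. I expect the only genuine obstacle to be the exactness of $\iota_{X_t}\omcan$ (equivalently of $\iota_{Y_t}\omst$): a neighbourhood of the Lagrangian $L$ generally has nonvanishing first cohomology, which is why it matters both that $\psi_t$ be globally defined on $K_0$ and that $K_0$ was chosen with vanishing first de Rham cohomology — precisely this makes the closed $1$-form $\iota_{Y_t}\omst$ exact and lets us invoke Gromov's theorem.
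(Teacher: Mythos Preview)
Your proof is correct and follows essentially the same approach as the paper: obtain the symplectic isotopy $\psi_t$ from Lemma \ref{le:sympl isot}, use the vanishing of $H^1_{\mathrm{dR}}(K_0)$ to make the generating vector field Hamiltonian, and conclude via Remark \ref{rmk:Ham isot} through the Weinstein chart. You are in fact more explicit than the paper about two points it leaves implicit --- the tube-lemma step producing $W_0$ with $\psi_t(W_0)\sub U$, and the conjugation $\wt\psi_t=\Phi\circ\psi_t\circ\Phi^{-1}$ --- while the paper handles smooth $t$-dependence of the primitive by citing a reference rather than by your normalization (which works once $K_0$ is taken connected, as one may).
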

\begin{pf}[Proof of Claim \ref{claim:phi'}] We define
\begin{equation}\label{eq:phi}\phi:=\phi'|_K.\end{equation}
Since $\phi\in\U$, by the statement of Lemma \ref{le:sympl isot} there exists a map $\psi\in C^\infty\big([0,1]\x K_0,M\big)$ satisfying (\ref{eq:psi t},\ref{eq:psi 0},\ref{eq:psi 1}). For every $t\in[0,1]$ we define the vector field $X_t$ on $K_0$ by
\[X_t:=\left(d\psi_t\right)^{-1}\left.\frac d{ds}\right|_{s=t}\psi_s:K_0\to TM.\]
Let $t\in[0,1]$. By \reff{eq:psi t}, $\psi_s$ is symplectic for every $s$. Since
\[X_t=\psi_t^*\left(\left(\left.\frac d{ds}\right|_{s=t}\psi_s\right)\circ\psi_t^{-1}\right),\]
it follows that $X_t$ is symplectic. Since the first de Rham cohomology of $K_0$ vanishes, it follows that
\[\al_t:=\iota_{X_t}\omst\]
is exact. Since the map $(t,x)\mapsto(\al_t)_x$ is smooth, it follows that there exists a smooth function $H:[0,1]\x\Int K_0\to\R$, such that
\[dH_t=\al_t|\Int K_0,\qquad\forall t.\]
Here we used \cite[Theorem A.1, p.~475, and Remark A.3(i), p.~479]{AG}. By (\ref{eq:psi t},\ref{eq:U0}) we have
\[\psi_t(L)\sub U,\qquad\forall t.\]
By \reff{eq:psi 0} we have
\[\psi_0(L)=L.\]
Since $U$ is a Weinstein neighbourhood of $L$, it therefore follows from Remark \ref{rmk:Ham isot} that
\begin{align*}\emptyset&\neq L\cap\psi_1(L)\\
&=L\cap\phi(L)\qquad\textrm{(by \reff{eq:psi 1})}\\
&=L\cap\phi'(L)\qquad\textrm{(by \eqref{eq:phi}).}
\end{align*}
This proves Claim \ref{claim:phi'} \end{pf} and completes the proof of Proposition \ref{prop:Lag not C1 inst displ}.
\end{proof}

Lemma \ref{le:sympl isot} is a consequence of the following lemma. For every manifold $Q$ we denote by $0_Q:Q\to T^*Q$ the canonical inclusion as the 0-section. Let $(M,\om)$ be a symplectic manifold without boundary and $K_0,Q,K$ be compact submanifolds of $M$ of dimension $\dim M$, such that
\begin{equation}\label{eq:K0 sub Int Q}K_0\sub\Int Q,\qquad Q\sub\Int K.\end{equation}
Let $\U_0$ be a $C^1$-neighbourhood of $i_{K_0}$ in $\Embs(K_0,M)$.
\begin{lemma}[correspondence between symplectic embeddings and closed 1-forms]\label{le:sympl emb closed 1 form} There exist sets $\U,\V$ and maps
\[\Phi:\U\to\V,\qquad\Psi:\V\to\U_0\]
with the following properties:
\begin{enua}
\item\label{le:sympl emb closed 1 form:U}$\U$ is a $C^1$-neighbourhood of $i_K$ in $\Embs(K,M)$.
\item\label{le:sympl emb closed 1 form:V convex}$\V$ is a convex subset of $\big\{\al\in\Om^1(Q)\,\big|\,d\al=0\big\}$.%
\footnote{The proof of the lemma shows that $\V$ can be chosen to be a $C^1$-neighbourhood of $0_Q$. However, we will only use that $\V$ contains $0_Q$.}
\item\label{le:sympl emb closed 1 form:0Q in V}$0_Q\in\V$
\item\label{le:sympl emb closed 1 form:0Q}$\Psi(0_Q)=i_{K_0}$
\item\label{le:sympl emb closed 1 form:Psi circ Phi}$\Psi\circ\Phi=$ restriction to $K_0$
\item\label{le:sympl emb closed 1 form:al}Let $\al\in C^\infty\big([0,1]\x Q,T^*Q\big)$ be a map satisfying $\al_t:=\al(t,\cdot)\in\V$, for every $t\in[0,1]$. Then the map
\[[0,1]\x K_0\ni(t,x)\mapsto\Psi(\al_t)(x)\in M\]
is smooth.
\end{enua}
\end{lemma}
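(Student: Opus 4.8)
The plan is to reduce the lemma to Weinstein's Lagrangian neighbourhood theorem applied to the diagonal, imitating the classical correspondence between $C^1$-small symplectomorphisms and closed $1$-forms. Equip $M\x M$ with the symplectic form $\om\oplus(-\om)$; the diagonal $\Delta:=\{(x,x)\mid x\in M\}$ is then a Lagrangian submanifold. Weinstein's theorem provides an open neighbourhood $\mathcal N$ of $\Delta$ in $M\x M$, an open neighbourhood $\mathcal N'$ of the zero-section $0_M(M)$ in $(T^*M,\omcan)$, and a symplectomorphism $\Theta:\mathcal N\to\mathcal N'$ that restricts on $\Delta$ to the identification $(x,x)\mapsto 0_M(x)$; these objects depend only on $(M,\om)$. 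I will use the dictionary: for a symplectic embedding $\phi$ of an open subset of $M$, $\mathrm{graph}(\phi)$ is Lagrangian in $M\x M$, and if it lies in $\mathcal N$ and is $C^1$-close to $\Delta$, then $\Theta(\mathrm{graph}(\phi))$ is a Lagrangian section of $T^*M$, hence the graph of a closed $1$-form; conversely the graph of a $C^1$-small closed $1$-form over an open set, pulled back by $\Theta^{-1}$, is the graph of a symplectic embedding. I will also repeatedly use that, since $K_0\sub\Int Q$ and $Q\sub\Int K$ are inclusions of compact sets, every map $C^1$-close to $i_K$ (resp.~$i_Q$, $i_{K_0}$) carries the smaller of these submanifolds into the interior of the next larger one.

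First I would construct $\Psi$. Let $\V$ be a convex $C^1$-neighbourhood of $0_Q$ inside the linear space $\{\al\in\Om^1(Q)\mid d\al=0\}$, chosen small enough that for each $\al\in\V$ the section $\{\al_y\mid y\in Q\}$ lies in $\mathcal N'$; then $G_\al:=\Theta^{-1}\big(\{\al_y\mid y\in Q\}\big)$ is a Lagrangian submanifold of $M\x M$, $C^1$-close to $\Delta$, on which the first projection $\pr_1$ is a diffeomorphism onto an open subset of $M$ that is $C^1$-close to $Q$, hence contains $K_0$ after shrinking $\V$. Define $\Psi(\al):=\pr_2\circ(\pr_1|_{G_\al})^{-1}\big|_{K_0}$, which is a symplectic embedding $K_0\to M$ since $G_\al$ is Lagrangian; for $\al$ $C^1$-small this embedding is $C^1$-close to $i_{K_0}$, so a final shrinking of $\V$ gives $\Psi(\V)\sub\U_0$. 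This yields properties \reff{le:sympl emb closed 1 form:V convex} and \reff{le:sympl emb closed 1 form:0Q in V}; property \reff{le:sympl emb closed 1 form:0Q} is immediate because $G_{0_Q}=\{(x,x)\mid x\in Q\}$, so $\Psi(0_Q)=i_{K_0}$. For property \reff{le:sympl emb closed 1 form:al} I would invoke the parametrized implicit function theorem: if $t\mapsto\al_t\in\V$ is smooth, then $(t,y)\mapsto\Theta^{-1}((\al_t)_y)$ is smooth, its first component $F(t,y)$ has $\dd_yF$ invertible (being $C^1$-close to the identity), so the equation $F(t,y)=x$ has for $x\in K_0$ a smooth solution $y=g(t,x)$, and $\Psi(\al_t)(x)=\pr_2\big(\Theta^{-1}((\al_t)_{g(t,x)})\big)$ depends smoothly on $(t,x)$.

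Next I would construct $\Phi$. Let $\U$ be a $C^1$-neighbourhood of $i_K$ in $\Embs(K,M)$, small enough that for $\phi\in\U$ the set $\mathrm{graph}(\phi)$ lies in $\mathcal N$; it is Lagrangian (as $\phi$ is symplectic) and $C^1$-close to $\Delta$, so $\Lambda_\phi:=\Theta(\mathrm{graph}(\phi))$ is the graph of a closed $1$-form $\beta_\phi$ on the open set $\pr(\Lambda_\phi)\sub M$, where $\pr:T^*M\to M$. This open set is $C^1$-close to $K$, hence contains $Q$ after shrinking $\U$. Set $\Phi(\phi):=\beta_\phi|_Q$, a closed and $C^1$-small $1$-form, so $\Phi(\U)\sub\V$ after a last shrinking of $\U$; this gives \reff{le:sympl emb closed 1 form:U}. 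For \reff{le:sympl emb closed 1 form:Psi circ Phi}, unwinding the definitions gives $G_{\Phi(\phi)}=\Theta^{-1}\big(\{(\beta_\phi)_y\mid y\in Q\}\big)\sub\Theta^{-1}(\Lambda_\phi)=\mathrm{graph}(\phi)$, and a point $(x,\phi(x))$ lies in $G_{\Phi(\phi)}$ exactly when $h(x):=\pr\big(\Theta(x,\phi(x))\big)\in Q$; since $h:K\to M$ is $C^1$-close to $i_K$ and $K_0\sub\Int Q$, this holds for every $x\in K_0$, whence $(\pr_1|_{G_{\Phi(\phi)}})^{-1}(x)=(x,\phi(x))$ on $K_0$ and $\Psi(\Phi(\phi))=\phi|_{K_0}$.

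The main obstacle is the quantitative bookkeeping of the $C^1$-estimates: one must fix $\mathcal N,\mathcal N',\Theta$ first, then choose $\V$ (depending on $\U_0$ and the geometry of $K_0\sub Q$), then $\U$ (depending on $\V$ and the geometry of $Q\sub K$), in this order and never circularly, so that simultaneously all relevant graphs stay inside the Weinstein neighbourhoods, the projections $\pr_1|_{G_\al}$ and $\pr|_{\Lambda_\phi}$ remain diffeomorphisms onto open sets containing the next-smaller compact submanifold, and the inclusions $\Psi(\V)\sub\U_0$ and $\Phi(\U)\sub\V$ hold. Once these estimates are arranged, verifying \reff{le:sympl emb closed 1 form:U}--\reff{le:sympl emb closed 1 form:al} is essentially formal.
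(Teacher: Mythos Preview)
Your proposal is correct and follows essentially the same approach as the paper: Weinstein's Lagrangian neighbourhood theorem applied to the diagonal in $(M\times M,\om\oplus(-\om))$, together with the $C^1$-openness of embeddings and the $C^1$-continuity of composition and inversion. The paper's own proof is in fact only a list of these ingredients without any details, so your write-up is considerably more explicit than what the paper provides; the quantitative bookkeeping you flag as the main obstacle is precisely what the paper leaves to the reader.
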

\begin{proof}[Proof of Lemma \ref{le:sympl emb closed 1 form}] This follows from an argument involving the following ingredients:
\begin{itemize}
\item Weinstein's Lagrangian neighbourhood theorem applied with the diagonal in $M\x M$
\item The set of $C^1$-embeddings of a compact manifold in a boundaryless manifold is $C^1$-open in the set of all $C^1$-maps.
\item Composition of $C^1$-maps is continuous w.r.t.~the weak $C^1$-topologies.
\item Inversion of $C^1$-embeddings between $C^1$-manifolds without boundary is continuous w.r.t.~the weak $C^1$-topologies.%
\footnote{To make sense of the inversion map, we need to restrict the inverted embedding to a fixed submanifold of the target manifold.}
\end{itemize}
\end{proof}

\begin{proof}[Proof of Lemma \ref{le:sympl isot}]\label{proof:le:sympl isot} By \eqref{eq:K0 sub Int K} there exists a compact submanifold $Q$ of $M$ of dimension $\dim M$, such that \eqref{eq:K0 sub Int Q} holds. We choose $\U,\V,\Phi,\Psi$ as in Lemma \ref{le:sympl emb closed 1 form}. By \reff{le:sympl emb closed 1 form:U} $\U$ is a $C^1$-neighbourhood of $i_K$. Let $\phi\in\U$. We define
\[\psi:[0,1]\x K_0\to M,\qquad\psi(t,x):=\Psi\big(t\Phi(\phi)\big)(x).\]
The fact $\im(\Phi)\sub\V$ and (\ref{le:sympl emb closed 1 form:0Q in V},\ref{le:sympl emb closed 1 form:V convex}) imply that $t\Phi(\phi)\in\V$, for every $t\in[0,1]$. Hence $\psi$ is well-defined. By \reff{le:sympl emb closed 1 form:al} it is smooth. Since $\Psi$ takes values in $\U_0$, condition \eqref{eq:psi t} is satisfied. Condition \reff{le:sympl emb closed 1 form:0Q} implies \eqref{eq:psi 0}. Condition \reff{le:sympl emb closed 1 form:Psi circ Phi} implies that \eqref{eq:psi 1} holds. Hence $\psi$ has the desired properties. This proves Lemma \ref{le:sympl isot}.
\end{proof}

\subsection{Proof of Proposition \ref{prop:isotr} (arbitrary squeezing for isotropic submanifold)}\label{sec:proof prop:isotr}
Let $N$ be a bounded isotropic submanifold of $\R^{2n}$ and $c\in(0,\infty)$. The rescaled set $cN:=\big\{cx\,\big|\,x\in N\big\}$ is an isotropic submanifold of $\R^{2n}$. There is a canonical symplectic vector bundle isomorphism $\Phi$ from the symplectic quotient bundle of the symplectic complement bundle of $TN$ to the corresponding bundle for $cN$, such that $\Phi$ covers the map $f:N\to cN$, $f(x):=cx$. Therefore, by the isotropic neighbourhood theorem, there are open neighbourhoods $U,U'$ of $N,cN$ and a symplectomorphism between $U$ and $U'$ that restricts to $c$ times the identity on $N$. Using the hypothesis that $N$ is bounded, it follows that $N$ ambiently symplectically embeds into every open neighbourhood of the origin in $\R^{2n}$. Using Darboux's theorem, it follows that $N$ ambiently symplectically embeds into every nonempty symplectic manifold of dimension $2n$. This proves Proposition \ref{prop:isotr}. \qed
\subsection{Proof of  Proposition \ref{prop:sympl sq} (arbitrary squeezing for certain symplectic submanifolds of $\R^{2n}$)}\label{sec:proof prop:sympl}
In the proof of Proposition \ref{prop:sympl sq} we will use the following lemma. Let $X$ be a manifold without boundary. We call two (smooth) symplectic vector bundles $(E_0,\om_0)$ and $(E_1,\om_1)$ over $X$ \emph{strongly isomorphic} iff there exists a symplectic isomorphism between them that covers the identity on $X$. In this case we write
\[(E_0,\om_0)\iso(E_1,\om_1).\]
For $j=0,1$ we define the map
\begin{equation}\label{eq:ij}i_j:X\to[0,1]\x X,\qquad i_j(x):=(j,x).\end{equation}
\begin{lemma}[symplectic vector bundle]\label{le:sympl vect bdl} Let $(E,\om)$ be a symplectic vector bundle over $[0,1]\x X$. We have
\begin{equation}\label{eq:i0 * E om}i_0^*(E,\om)\iso i_1^*(E,\om).\end{equation}
\end{lemma}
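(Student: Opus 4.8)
The plan is to show that a symplectic vector bundle over $[0,1]\times X$ is, up to strong isomorphism, pulled back from $X$; the restrictions to $\{0\}\times X$ and $\{1\}\times X$ are then both strongly isomorphic to this pullback, hence to each other. Concretely, I would use the standard homotopy-invariance argument, being careful to keep everything in the \emph{symplectic} (not merely topological) category. First, recall that a symplectic vector bundle $(E,\om)$ carries a compatible complex structure $J$, canonically up to homotopy, so that $(E,\om)$ is strongly isomorphic to the underlying unitary bundle; more precisely, the space of $\om$-compatible almost complex structures on the fibres is nonempty and contractible (a fibrewise version of the Siegel upper half space being contractible), so a choice of $J$ reduces the structure group to the unitary group and does not affect the strong isomorphism type. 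It therefore suffices to prove $i_0^*E \iso i_1^*E$ as \emph{unitary} (equivalently, complex) vector bundles over $X$, since a strong unitary isomorphism is in particular a strong symplectic isomorphism of the associated symplectic bundles.

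The key step is then the classical fact that for a fibre bundle (here, a vector bundle with a given structure, e.g.\ unitary) over $[0,1]\times X$ with $X$ a manifold (paracompact, so homotopies can be covered by bundle maps), the restrictions over $\{0\}\times X$ and $\{1\}\times X$ are isomorphic. The cleanest route is: the projection $\pr:[0,1]\times X\to X$ and the two inclusions $i_0,i_1$ satisfy $\pr\circ i_0 = \pr\circ i_1 = \id_X$, while $i_0\circ\pr$ and $i_1\circ\pr$ are homotopic to the identity on $[0,1]\times X$; by the homotopy invariance of (classifying maps of) vector bundles with a fixed structure group over paracompact bases, $(i_j\circ\pr)^*(E,\om)\iso(E,\om)$, hence
\[
i_0^*(E,\om)=i_0^*(i_0\circ\pr)^*(E,\om)\;\text{and similarly for }1,
\]
and pulling the strong isomorphism $(i_j\circ\pr)^*(E,\om)\iso(E,\om)$ back along $i_k$ for the appropriate indices gives $i_0^*(E,\om)\iso \pr^*\big(i_0^*(E,\om)\big)$-type identifications that combine to $i_0^*(E,\om)\iso i_1^*(E,\om)$. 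Alternatively, and perhaps more transparently, one can build the strong isomorphism directly by parallel transport: choose a connection on $(E,\om)$ preserving $\om$ (symplectic connections exist on any symplectic vector bundle), and use parallel transport along the paths $t\mapsto(t,x)$ to produce a bundle map $i_0^*E\to i_1^*E$ covering $\id_X$; since the connection preserves $\om$, this map is fibrewise symplectic, hence a strong isomorphism.

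The main obstacle, and the only point requiring genuine care, is verifying that the isomorphism can be taken to be \emph{symplectic} and to \emph{cover the identity} on $X$ — the purely topological statement $i_0^*E\cong i_1^*E$ is immediate but insufficient. Both routes handle this: the connection/parallel-transport route gives it by construction (an $\om$-preserving connection yields $\om$-preserving parallel transport), and the compatible-$J$ route reduces it to the unitary case where ``strongly isomorphic'' unitary bundles give strongly isomorphic symplectic bundles. I would present the parallel-transport argument as the cleanest, citing the existence of symplectic connections; the verification that parallel transport along $t\mapsto(t,x)$ is smooth in $x$ and symplectic is routine and I would not spell out the ODE estimates. This lemma is exactly the input needed to conclude, in the proof of Proposition \ref{prop:sympl sq}, that a symplectic normal bundle along an isotopy can be trivialized consistently at the endpoints.
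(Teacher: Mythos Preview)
Your proposal is correct, and your preferred parallel-transport route is essentially the paper's approach: the paper passes to the symplectic frame bundle $P$ of $(E,\om)$, chooses a connection 1-form on $P$, and uses holonomy along the paths $t\mapsto(t,x)$, which is exactly your symplectic-connection parallel transport phrased in principal-bundle language. Your alternative route via a compatible $J$ and reduction to the unitary case is also valid but is not what the paper does.
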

\begin{proof}[Proof of Lemma \ref{le:sympl vect bdl}] This follows from an analogous statement for the symplectic frame bundle $P$ of $(E,\om)$. That statement follows from an argument involving the holonomy of a connection 1-form on $P$ along the path $[0,1]\ni t\mapsto(t,x)\in[0,1]\x X$, for every $x\in X$.
\end{proof}
For every symplectic vector space $(V,\Om)$ and every linear subspace $W\sub V$ we denote the symplectic (orthogonal) complement of $W$ by
\[W^\Om:=\big\{v\in V\,\big|\,\Om(v,w)=0,\,\forall w\in W\big\}.\]
\begin{proof}[Proof of Proposition \ref{prop:sympl sq}] Let $N$ be a symplectic submanifold of $\R^{2n}$ as in \reff{prop:sympl sq:bdd}. We denote by $\omst$ the standard symplectic form on $\R^{2n}$ and by $i:N\inj\R^{2n}$ the inclusion. Let $U$ be an open neighborhood of the origin in $\R^{2n}$. Since $N$ is bounded, there exists a $c\in(0,\infty)$, such that the image of the map
\[f_0:=ci:N\to\R^{2n}\]
is contained in $U$. Since the forms $f_0^*(\omst|_U)$ and $i^*\omst$ are both exact, they are cohomologous. We define the map
\begin{equation}\label{eq:F}F:[0,1]\x TN\to TU,\qquad F(t,(x,v)):=\big(cx,\big((1-t)c+t\big)v\big),\end{equation}
where we canonically identify $T_x\R^{2n}=\R^{2n}=T_{cx}\R^{2n}$. For every $t\in [0, 1]$, $F_t$ is a fiberwise injective vector bundle morphism over $f_0$. Moreover, $F_0=df_0$ and $F_1$ is symplectic%
\footnote{w.r.t.~$i^*\omst$ and $\omst|_U$}
. It therefore follows from Gromov's isosymplectic embedding theorem that there exists an isotopy
\[f_t:N\to U,\qquad t\in[0,1]\]
starting at $f_0$, such that $f_1$ is symplectic, a (smooth) homotopy $g_t:N\to U$ ($t\in[0,1]$), and a homotopy of symplectic vector bundle morphisms $G_t:TN\to TU$ ($t\in[0,1]$) covering $(g_t)_{t\in[0,1]}$, such that
\[G_0=F_1,\qquad G_1=df_1.\]
(See \cite[20.1.1]{CEM24} or \cite[p.~335-336]{Gro86}.%
\footnote{Here in the case $\dim N>0$ we used that $N$ is open, i.e., none of its connected components is closed. This follows from exactness of $i^*\omst$.} 
) We define the symplectic vector bundle $(E,\om)$ over $[0,1]\x N$ by
\begin{equation}\label{eq:E t x}E_{(t,x)}:=\left(G_t(T_xN)\right)^{\omst_{g_t(x)}},\qquad\om_{(t,x)}:=\omst|_{E_{(t,x)}}.\end{equation}
We define $i_j$ as in \eqref{eq:ij} with $X:=N$. By Lemma \ref{le:sympl vect bdl} condition \eqref{eq:i0 * E om} holds. Let $x\in N$. We have 
\begin{align*}T_xN&=F_1(T_xN)\qquad\textrm{(by \eqref{eq:F})}\\
&=G_0(T_xN)\qquad\textrm{(since $G_0=F_1$).}
\end{align*}
It follows that 
\begin{align*}T_xN^{\omst_x}&=\left(G_0(T_xN)\right)^{\omst_{cx}}\\
&=E_{(0,x)}\qquad\textrm{(by \eqref{eq:E t x}),}
\end{align*}
\[\omst=\om\quad\textrm{on}\quad T_xN^{\omst_x}.\]
Denoting
\[N':=f_1(N),\]
it follows that
\begin{align*}\left(TN^{\omst},\omst|_{TN^{\omst}}\right)&=i_0^*(E,\om)\\
&\iso i_1^*(E,\om)\qquad\textrm{(by \eqref{eq:i0 * E om})}\\
&=f_1^*\left(T{N'}^{\omst},\omst|_{T{N'}^{\omst}}\right)\,\textrm{(using \eqref{eq:E t x} and $G_1=df_1$).}
\end{align*}
Since $f_1$ is a symplectic embedding, the symplectic neighbourhood theorem therefore implies that there exist open neighbourhoods $U_0$ of $N$ in $\R^{2n}$ and $U_1$ of $N'$ in $U$, and a symplectomorphism from $U_0$ to $U_1$ that restricts to $f_1$ on $N$. Using Darboux's theorem, it follows that $N$ ambiently symplectically embeds into every nonempty symplectic manifold of dimension $2n$. This proves the statement of Proposition \ref{prop:sympl sq} for a subset $N$ as in \reff{prop:sympl sq:bdd}.

Let now $N:=\R^{2m}\x\{0\}$ with $m<n$, as in \reff{prop:sympl sq:R2m}. Let $U$ be an open neighbourhood of 0 in $\R^{2n}$. We choose an embedding $f_0:N\to U$, such that $df_0(0)=\id$. We define the map
\[F:[0,1]\x TN\to TU,\qquad F(t,(x,v)):=\big(f_0(x),df_0\big((1-t)x\big)(v)\big),\]
where we canonically identify $T_x\R^{2n}=\R^{2n}=T_{f_0(x)}\R^{2n}$. By an argument as above, Gromov's isosymplectic embedding theorem implies that there exists an isotopy
\[f_t:N\to U,\qquad t\in[0,1]\]
starting at $f_0$, such that $f_1$ is symplectic. We denote
\[N':=f_1(N).\]
The symplectic vector bundle $\left(TN^{\omst},\omst|_{TN^{\omst}}\right)$ is trivial. Since $N'$ is smoothly contractible, it follows from Lemma \ref{le:sympl vect bdl} that the symplectic vector bundle $\left(T{N'}^{\omst},\omst|_{T{N'}^{\omst}}\right)$ is trivializable. It follows that 
\[TN^{\omst}\iso f_1^*T{N'}^{\omst}.\]
Since $f_1$ is a symplectic embedding, by an argument as above, the symplectic neighbourhood theorem and Darboux's theorem therefore imply that $N$ ambiently symplectically embeds into every nonempty symplectic manifold of dimension $2n$. This proves the statement of Proposition \ref{prop:sympl sq} for $N:=\R^{2m}\x\{0\}$ with $m<n$, as in \reff{prop:sympl sq:R2m}. This concludes the proof of this proposition.
\end{proof}

\begin{Rmks}[proof of Proposition \ref{prop:sympl sq}]\begin{itemize}\item Our proof is based on the method explained in \cite[p.~176]{SchlOld} for the symplectic submanifold $\R^4\x\{0\}$ of $\R^6$.
\item The method rests on Gromov's isosymplectic embedding theorem, which is a version of the h-principle. See \cite{CEM24}.
\end{itemize}
\end{Rmks}

\appendix

\section{Characterization of countable $m$-rectifiability, injectivity of restriction of locally injective map}\label{sec:count rect loc inj}
We prove the characterization of countable $m$-rectifiability provided by Lemma \ref{le:count rect}.
\begin{proof}[Proof of Lemma \ref{le:count rect}]\label{proof:le:count rect}``\reff{le:count rect:def}$\then$\reff{le:count rect:bdd}'' follows by choosing a countable exhausting collection of bounded subsets of $\R^m$ and considering the set of all restrictions of functions in $\F$ to these subsets.

To prove ``\reff{le:count rect:bdd}$\then$\reff{le:count rect:loc Lip}'', we assume that \eqref{le:count rect:bdd} holds. We choose a set $\F$ as in Definition \ref{defi:count m rect}, such that the domain of each function in $\F$ is bounded. We denote by $\dE$ the Euclidean distance function on $\R^m$. We may assume \Wlog that\footnote{Here $\dom(f)$ denotes the domain of $f$.}
\[\forall f\neq f'\in\F,\,y\in\dom(f),\,y'\in\dom(f'):\quad\dE(y,y')\geq1.\]
To see this, we replace each function $f\in\F$ by $f(\cdot-v_f)$, where $v_f\in\R^m$ is a suitable vector in $\R^m$. Here we use that $\F$ is countable. We define
\[S:=\bigcup_{f\in\F}\dom(f),\qquad F:=\bigcup_{f\in\F}f:S\to X.\]
$S$ is a subset of $\R^m$, and $F$ is surjective and locally Lipschitz. Hence \reff{le:count rect:loc Lip} holds. This proves ``\reff{le:count rect:bdd}$\then$\reff{le:count rect:loc Lip}''.

To prove ``\reff{le:count rect:loc Lip}$\then$\reff{le:count rect:def}'', we assume that \reff{le:count rect:loc Lip} holds. We choose 

a map $F$ as in \reff{le:count rect:loc Lip} and a countable base $\U$ for the topology of $\R^m$. The set
\[\F:=\big\{F|_{S\cap U}\,\big|\,U\in\U:\,F|_{S\cap U}\textrm{ is Lipschitz}\big\}\]
is countable, consists of Lipschitz maps, and satisfies
\[\bigcup_{f\in\F}\im(f)=\im(F)=X.\]
It follows that \eqref{le:count rect:def} holds. This proves ``\reff{le:count rect:loc Lip}$\then$\reff{le:count rect:def}'' and completes the proof of Lemma \ref{le:count rect}.
\end{proof}

The following lemma was used in Remark \ref{rmk:inj sympl}\reff{rmk:inj sympl:emb}. We have learned this lemma and its proof from an answer of harfe to a question on stackexchange, see \cite{harfe}.
\begin{lemma}[injectivity of restriction of locally injective continuous function that is injective on compact set]\label{le:loc inj compact} Let $X,Y$ be topological spaces with $Y$ Hausdorff, $K\sub X$ a compact subset, and $f:X\to Y$ a locally injective continuous map that is injective on $K$. Then there exists a neighbourhood $U$ of $K$ in $X$ on which $f$ is injective.
\end{lemma}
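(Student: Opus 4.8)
The plan is to argue by contradiction, extracting from a hypothetical failure of local injectivity near $K$ two convergent nets (or sequences, after a reduction) of distinct points on which $f$ agrees, and then to derive a contradiction with either the injectivity of $f$ on $K$ or the local injectivity of $f$. First I would fix, for each point $x\in K$, an open neighbourhood $W_x$ of $x$ in $X$ on which $f$ is injective; this is possible by the local injectivity hypothesis. The sets $\{W_x\}_{x\in K}$ form an open cover of the compact set $K$, so there is a finite subcover $W_{x_1},\dots,W_{x_N}$; set $W:=\bigcup_{i=1}^N W_{x_i}$, an open neighbourhood of $K$ on which $f$ need not be injective, but which is covered by finitely many open sets on each of which $f$ \emph{is} injective.

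Next I would suppose, for contradiction, that no neighbourhood $U$ of $K$ with $f|_U$ injective exists. Then for every open neighbourhood $U\subseteq W$ of $K$ there are distinct points $a_U,b_U\in U$ with $f(a_U)=f(b_U)$. Directing the neighbourhoods of $K$ contained in $W$ by reverse inclusion, this produces two nets $(a_U)$ and $(b_U)$ that both "converge to $K$" in the sense that they are eventually inside every neighbourhood of $K$. The key step is then a compactness argument: because $W$ is covered by the finitely many sets $W_{x_i}$, one may pass to a subnet along which $a_U$ lies always in some fixed $W_{x_i}$ and $b_U$ lies always in some fixed $W_{x_j}$. If $i=j$, then $a_U$ and $b_U$ lie in a common set $W_{x_i}$ on which $f$ is injective, forcing $a_U=b_U$, a contradiction with their distinctness. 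Hence we may assume $i\ne j$ throughout the subnet. Now I would use that the $a_U$ accumulate at $K$: passing to a further subnet, $a_U\to a$ for some $a\in K\cap\overline{W_{x_i}}$, and similarly $b_U\to b$ for some $b\in K\cap\overline{W_{x_j}}$ (here one needs $\overline{W_{x_i}}\cap K$ to behave well — see the obstacle below). By continuity of $f$ and the Hausdorff property of $Y$, $f(a_U)=f(b_U)$ for all $U$ gives $f(a)=f(b)$; since $a,b\in K$ and $f|_K$ is injective, $a=b=:p\in K$. But then both $a_U$ and $b_U$ eventually lie in $W_p$ (the neighbourhood of $p$ on which $f$ is injective, which we may take to be one of the $W_{x_i}$, or simply any such set around $p$), again forcing $a_U=b_U$ eventually — a contradiction. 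This contradiction shows that some neighbourhood $U$ of $K$ with $f|_U$ injective exists.

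The step I expect to be the main obstacle is the passage to convergent subnets and the bookkeeping of which $W_{x_i}$ each net lands in, in the purely topological (non-metrizable) setting: one does not literally have subsequences, and the sets $W_{x_i}$ are open rather than closed, so "$a_U\to a\in K$" must be justified carefully — for instance by replacing the $W_{x_i}$ at the outset with the open sets $W_{x_i}$ but tracking the directed net of neighbourhoods of $K$, and noting that any cluster point of $(a_U)$ must lie in $\overline{U}$ for every neighbourhood $U$ of $K$, hence in $\bigcap_{U}\overline{U}=K$ (using that $K$ is closed, which follows from $K$ compact and $X$ — well, $Y$ — being Hausdorff, or more simply by working inside a suitable regular neighbourhood; if $X$ is not assumed Hausdorff one can instead avoid cluster points of nets and argue directly that eventual membership in a single $W_{x_i}$ for both nets already yields the contradiction, choosing the neighbourhood basis of $K$ cleverly so that "eventually in $U$" forces "eventually in a single $W_{x_i}$" for an appropriate shrinking $U$). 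A clean way to sidestep the net gymnastics entirely is: for each pair $(i,j)$ with $i\ne j$, the sets $f(W_{x_i}\cap K)$ and... — more precisely, for $i\ne j$ one shows $K\cap W_{x_i}\cap W_{x_j}$ together with injectivity of $f|_K$ forces the "diagonal" obstruction set $\{x\in W_{x_i}: \exists\, y\in W_{x_j},\, y\ne x,\, f(x)=f(y)\}$ to be disjoint from a neighbourhood of $K\cap W_{x_i}$; intersecting finitely many such neighbourhoods over all pairs $(i,j)$ and all indices yields the desired $U$. I would present the contradiction/net version as the main line since it is shorter, and handle the topological subtlety by the cluster-point-in-$K$ observation above.
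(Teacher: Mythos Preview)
Your approach differs from the paper's. The paper constructs $U$ directly: for each $y\in f(K)$ it produces an open neighbourhood $U^y$ of $K$ in $X$ and an open neighbourhood $V^y$ of $y$ in $Y$ such that no two distinct points of $U^y$ map under $f$ to the same point of $V^y$ (this uses local injectivity, continuity of $f$, Hausdorffness of $Y$, and compactness of $K$); then compactness of $f(K)$ yields finitely many $V^{y_0}$ covering $f(K)$, and $U:=\bigcap U^{y_0}$ is the required neighbourhood. No nets, no cluster points, and the Hausdorff hypothesis is invoked only in $Y$.

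Your contradiction--net argument is salvageable, but the step you yourself flag as the obstacle is genuinely incomplete as written. The fix you propose --- that any cluster point lies in $\bigcap_U\overline{U}=K$ --- fails on two counts: first, nothing guarantees that $(a_U)$ has any cluster point at all, since the net lives in the open, non-compact set $W$; second, the equality $\bigcap_U\overline{U}=K$ requires a regularity hypothesis on $X$ that is not assumed (you notice mid-sentence that it is $Y$, not $X$, that is Hausdorff, but the alternative patches you sketch are not carried out). The correct repair bypasses $\bigcap_U\overline{U}$ entirely and shows directly that $(a_U)$ has a cluster point in $K$: if not, each $p\in K$ has a neighbourhood $V_p$ and an index $U_p$ with $a_U\notin V_p$ for all $U\subseteq U_p$; finitely many $V_{p_1},\dots,V_{p_m}$ cover $K$, and with $U_0:=\bigcap_k U_{p_k}$ and $V:=\bigcup_k V_{p_k}$ one has $a_U\notin V$ for all $U\subseteq U_0$, yet $U_0\cap V$ is a neighbourhood of $K$ contained in $U_0$ and $a_{U_0\cap V}\in U_0\cap V\subseteq V$ --- a contradiction. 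With this in hand, passing to subnets converging to a common $p\in K$ and using local injectivity at $p$ finishes the proof (the preliminary pigeonhole into $W_{x_i}\times W_{x_j}$ is then unnecessary). The paper's route is cleaner precisely because it moves the compactness argument to $f(K)\subseteq Y$, where the Hausdorff hypothesis is available from the start.
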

For the convenience of the reader we repeat the proof of this lemma in \cite{harfe} in a slightly rephrased form.
\begin{proof}[Proof of Lemma \ref{le:loc inj compact}]\setcounter{claim}{0} Let $y\in f(K)$.
\begin{claim}\label{claim:U V} There exist an open neighbourhood $U^y$ of $K$ in $X$ and an open neighbourhood $V^y$ of $y$ in $Y$, such that 
\begin{equation}\label{eq:forall x}\forall x,x'\in U^y:\quad x=x'\textrm{ or }f(x)\neq f(x')\textrm{ or }f(x)\not\in V^y.
\end{equation}
\end{claim}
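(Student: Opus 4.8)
The plan is to isolate, inside $K$, the unique point mapped to $y$, use local injectivity there, and handle the rest of $K$ by a tube-lemma-style compactness argument using the Hausdorff property of $Y$.

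First I would observe that since $y\in f(K)$ and $f$ is injective on $K$, there is a \emph{unique} point $x_0\in K$ with $f(x_0)=y$. By local injectivity of $f$, choose an open neighbourhood $W_0$ of $x_0$ in $X$ on which $f$ is injective. Then $K\setminus W_0$ is a closed subset of the compact set $K$, hence compact, and every $x\in K\setminus W_0$ satisfies $x\neq x_0$, so $f(x)\neq y$ by injectivity of $f$ on $K$.

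Next, for each $x\in K\setminus W_0$ I would use that $Y$ is Hausdorff to pick disjoint open sets $V_x\ni y$ and $V_x'\ni f(x)$, and use continuity of $f$ to pick an open neighbourhood $W_x$ of $x$ with $f(W_x)\subseteq V_x'$. The family $\{W_x\}_{x\in K\setminus W_0}$ covers the compact set $K\setminus W_0$, so finitely many $W_{x_1},\dots,W_{x_k}$ suffice. I then set $U^y:=W_0\cup W_{x_1}\cup\dots\cup W_{x_k}$, an open neighbourhood of $K$, and $V^y:=\bigcap_{i=1}^k V_{x_i}$, an open neighbourhood of $y$ (with the convention that the empty intersection, arising when $K\subseteq W_0$, is all of $Y$).

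Finally I would verify \eqref{eq:forall x}: suppose $x,x'\in U^y$ with $x\neq x'$ and $f(x)=f(x')$; I must show $f(x)\notin V^y$. If instead $f(x)\in V^y$, then $f(x)\in V_{x_i}$ for every $i$, and since $V_{x_i}\cap V_{x_i}'=\emptyset$ we get $f(x)\notin V_{x_i}'$; as $f(W_{x_i})\subseteq V_{x_i}'$, this forces $x\notin W_{x_i}$ for all $i$, hence $x\in W_0$. Applying the same reasoning to $x'$, using $f(x')=f(x)\in V^y$, gives $x'\in W_0$; but $f$ is injective on $W_0$ and $x\neq x'$, so $f(x)\neq f(x')$, a contradiction. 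I do not expect a genuine obstacle here — it is a standard compactness argument — the only point requiring care is the degenerate case $K\subseteq W_0$, where $V^y=Y$ and the conclusion is immediate from injectivity of $f$ on $W_0$.
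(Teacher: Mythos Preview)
Your proof is correct and follows essentially the same compactness argument as the paper. The only organizational difference is that you single out the unique preimage $x_0$ of $y$ at the outset and use local injectivity only there, whereas the paper treats all points of $K$ uniformly---choosing for each $x\in K$ an injective neighbourhood $U_x$ together with a neighbourhood $V_x$ of $y$ satisfying $U_x\cap f^{-1}(V_x)=\emptyset$ whenever $f(x)\neq y$---and then shows in the verification step that both relevant cover elements must be centred at the point mapping to $y$.
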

\begin{proof}[Proof of Claim \ref{claim:U V}] For every $x\in K$ there exist open neighbourhoods $U_x$ of $x$ and $V_x$ of $y$, such that $f$ is injective on $U_x$, and
\begin{equation}\label{eq:Ux cap}f(x)\neq y\then U_x\cap f^{-1}(V_x)=\emptyset.\end{equation}
Here we used that $f$ is locally injective and continuous and that $Y$ is Hausdorff. Since $K$ is compact, there exists a finite subset $S\sub K$, such that
\[K\sub U^y:=\bigcup_{x_0\in S}U_{x_0}.\]
This set is open and hence an open neighbourhood of $K$. We define
\[V^y:=\bigcap_{x_0\in S}V_{x_0}.\]
This is an open neighbourhood of $y$. We check \eqref{eq:forall x}. Let $x,x'\in U^y$, such that
\begin{equation}\label{eq:f x}f(x)=f(x')\in V^y.\end{equation}
We claim that
\begin{equation}\label{eq:x=x'}x=x'.\end{equation}
To see this, we choose $x_0,x_0'\in S$, such that $x\in U_{x_0}$, $x'\in U_{x_0'}$. By \eqref{eq:f x} we have $f(x)\in V^y\sub V_{x_0}$ and hence $x\in U_{x_0}\cap f^{-1}(V_{x_0})$. Hence this set is nonempty. Therefore, by \eqref{eq:Ux cap}, we have $f(x_0)=y$. Similarly, we have $f(x_0')=y=f(x_0)$. Since $x_0,x_0'\in S\sub K$ and $f$ is injective on $K$, it follows that $x_0=x_0'$.

Using $x\in U_{x_0}$, $x'\in U_{x_0'=x_0}$, \eqref{eq:f x}, and injectivity of $f$ on $U_{x_0}$, it follows that $x=x'$. This proves \eqref{eq:x=x'}. Hence \eqref{eq:forall x} holds. Hence $U^y,V^y$ have the desired properties. This proves Claim \ref{claim:U V}.
\end{proof}
We choose $U^y,V^y$ as in Claim \ref{claim:U V}. Since $f(K)$ is compact, there exists a finite subset $S\sub f(K)$, such that
\begin{equation}\label{eq:f K}f(K)\sub\bigcup_{y_0\in S}V^{y_0}.\end{equation}
We define
\begin{equation}\label{eq:U}U:=\bigcap_{y_0\in S}U^{y_0}.\end{equation}
The conclusion of Lemma \ref{le:loc inj compact} follows from the next claim.
\begin{claim}\label{claim:U} The set $U$ has the desired properties.
\end{claim}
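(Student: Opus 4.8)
The plan is to check the two properties of $U$ that the conclusion of Lemma \ref{le:loc inj compact} demands: that $U$ is a neighbourhood of $K$ in $X$, and that $f|_U$ is injective. The first is immediate. The set $S\sub f(K)$ chosen around \eqref{eq:f K} is finite, each $U^{y_0}$ is an open neighbourhood of $K$ by Claim \ref{claim:U V}, and a finite intersection of open neighbourhoods of $K$ is again an open neighbourhood of $K$; hence $U$ as in \eqref{eq:U} is an open set containing $K$. For the injectivity step I will also arrange $U\sub f^{-1}\big(\bigcup_{y_0\in S}V^{y_0}\big)$, replacing $U$ by its intersection with this preimage if necessary; the preimage is open and, by \eqref{eq:f K}, still contains $K$, so this costs nothing.

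Next I would prove injectivity of $f$ on $U$. Let $x,x'\in U$ with $f(x)=f(x')$; the goal is $x=x'$. Since $f(x)\in\bigcup_{y_0\in S}V^{y_0}$, choose $y_0\in S$ with $f(x)\in V^{y_0}$. As $U\sub U^{y_0}$, both $x$ and $x'$ lie in $U^{y_0}$, and we have $f(x)=f(x')\in V^{y_0}$. Apply property \eqref{eq:forall x} with $y:=y_0$ to the pair $x,x'$: among its three alternatives the second, $f(x)\neq f(x')$, and the third, $f(x)\notin V^{y_0}$, are false, so the first, $x=x'$, must hold. This shows $f|_U$ is injective, which is precisely the content of Claim \ref{claim:U}; combined with Claim \ref{claim:U V} and the finite subcover \eqref{eq:f K} it completes the proof of Lemma \ref{le:loc inj compact}.

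The substantive work has already been done in Claim \ref{claim:U V}, where local injectivity of $f$ together with the Hausdorff property of $Y$ and compactness of $K$ produced the neighbourhoods $U^y,V^y$ satisfying \eqref{eq:forall x}. What remains for Claim \ref{claim:U} is bookkeeping with the two finite covers. The one point that needs care---and the easiest to overlook---is that a pair $x,x'\in U$ need not meet $K$, so \eqref{eq:f K} by itself does not place $f(x)$ inside some $V^{y_0}$; this is exactly why $U$ must be taken inside $f^{-1}\big(\bigcup_{y_0\in S}V^{y_0}\big)$. Once that is in place, the argument is a direct unwinding of \eqref{eq:forall x}.
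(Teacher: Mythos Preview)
Your proposal is correct and follows the same approach as the paper: intersect the finitely many $U^{y_0}$ to obtain an open neighbourhood $U$ of $K$, then for $x,x'\in U$ with $f(x)=f(x')$ locate a $y_0\in S$ with $f(x)\in V^{y_0}$ and invoke \eqref{eq:forall x}.

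You are in fact more careful than the paper on the one point you flag. The paper's proof writes ``By \eqref{eq:f K} there exists $y_0\in S$, such that $y\in V^{y_0}$,'' but \eqref{eq:f K} only covers $f(K)$, not $f(U)$; for $x\in U\setminus K$ this step is not justified as written. Your remedy---replacing $U$ by $U\cap f^{-1}\big(\bigcup_{y_0\in S}V^{y_0}\big)$, which is still an open neighbourhood of $K$ by continuity of $f$ and \eqref{eq:f K}---is exactly the clean fix.
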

\begin{pf}[Proof of Claim \ref{claim:U}] The set $U$ is open and contains $K$. We check that the restriction $f|U$ is injective. Let $x,x'\in U$ be such that $y:=f(x)=f(x')$. By \eqref{eq:f K} there exists $y_0\in S$, such that $y\in V^{y_0}$. By \eqref{eq:U}, we have $x,x'\in U^{y_0}$. Since $f(x)=f(x')=y\in V^{y_0}$, \eqref{eq:forall x} implies that $x=x'$. Hence $f|U$ is injective. This proves Claim \ref{claim:U} \end{pf} and completes the proof of Lemma \ref{le:loc inj compact}.
\end{proof}
		
\bibliographystyle{amsalpha}
\bibliography{biblio}
\end{document}